\documentclass[oneside,11pt]{article}

\usepackage{amsmath}
\usepackage{amssymb}
\usepackage{pxfonts}
\usepackage{dsfont}
\usepackage{graphicx}
\usepackage{eucal}
\usepackage{mathrsfs}
\usepackage{theorem}
\usepackage{pifont}
 \usepackage{sectsty}
\usepackage{amscd}
\usepackage{color}
\usepackage{fancyhdr}
\usepackage{framed}
\usepackage[all]{xy}
\usepackage[backref=page]{hyperref}
\usepackage{todonotes}
\usepackage{subcaption}
\definecolor{shadecolor}{rgb}{0.8,0.8,0.8}
\usepackage[stable]{footmisc}
\usepackage[normalem]{ulem}

\addtolength{\parskip}{4pt}
\setlength{\oddsidemargin}{0pt}
\setlength{\parindent}{0pt}
\addtolength{\textwidth}{1in}
\addtolength{\topmargin}{-.875in}
\addtolength{\textheight}{1.5in}

\theoremheaderfont{\fontfamily{pzc}\bfseries\large}
\newtheorem{theorem}{Theorem}[section]

\newtheorem{lemma}[theorem]{Lemma}
\newtheorem{proposition}[theorem]{Proposition}

\newtheorem{corollary}[theorem]{Corollary}
\newtheorem{definition}[theorem]{Definition}
\newtheorem{question}[theorem]{Question}
\newtheorem{remark}[theorem]{Remark}

\theorembodyfont{\rmfamily}

\newcommand{\specexercise}[1]{}

\newenvironment{proof}{{\flushleft \emph{Proof}:}}{\hfill\ding{110}}
\newenvironment{proof2}[1]{{\flushleft \emph{Proof of {#1}}:}}{\hfill\ding{110}}


\newcommand{\g}{\mathfrak{g}}

\newcommand{\Vol}{\text{Vol}}
\newcommand{\dVol}{\textup{d}\text{Vol}}

\newcommand{\M}{\mathcal{M}}
\newcommand{\R}{\mathbb{R}}

\newcommand{\vp}{\varphi}
\newcommand{\dist}{\operatorname{dist}}
\newcommand{\diam}{\operatorname{diam}}
\renewcommand{\div}{\operatorname{div}}
\newcommand{\id}{\operatorname{Id}}
\newcommand{\supp}{\operatorname{supp}}
\newcommand{\Diff}{\operatorname{Diff}}
\newcommand{\Diffc}{\operatorname{Diff}_\text{c}}

\newcommand{\length}{\operatorname{length}}

\newcommand{\e}{\varepsilon}

\newcommand{\w}{\omega}

\newcommand{\pl}{\partial}
\newcommand{\ind}{\mathds{1}}

\newcommand{\beq}{\begin{equation}}
\newcommand{\eeq}{\end{equation}}

\newcommand{\brk}[1]{\left(#1\right)}          
\newcommand{\Brk}[1]{\left[#1\right]}          
\newcommand{\BRK}[1]{\left\{#1\right\}}        
\newcommand{\Abs}[1]{\left|#1\right|}        

\newcommand{\appendixnumberline}[1]{Appendix\space}

\let\oldappendix\appendix
\makeatletter
\renewcommand{\appendix}{%
  \renewcommand{\@seccntformat}[1]{Appendix~\csname the##1\endcsname\quad}%
  \oldappendix
}
\makeatother

\numberwithin{equation}{section}

\begin{document}

\title{Can we run to infinity? The diameter of the diffeomorphism group with respect to right-invariant Sobolev metrics}
\author{Martin Bauer\footnote{Department of Mathematics, Florida State University. Email: bauer@math.fsu.edu }\, and Cy Maor\footnote{Einstein Institute of Mathematics, The Hebrew University of Jerusalem. Email: cy.maor@mail.huji.ac.il}}
\date{}
\maketitle

\begin{abstract}
The group $\Diff(\M)$ of diffeomorphisms of a closed manifold $\M$ is naturally equipped with various right-invariant Sobolev norms $W^{s,p}$.
Recent work showed that for sufficiently weak norms, the geodesic distance collapses completely (namely, when $sp\le \dim\M$ and $s<1$).
But when there is no collapse, what kind of metric space is obtained? In particular, does it have a finite or infinite diameter?
This is the question we study in this paper.
We show that the diameter is infinite for strong enough norms, when $(s-1)p\ge \dim\M$, and that for spheres the diameter is finite when $(s-1)p<1$.
In particular, this gives a full characterization of the diameter of $\Diff(S^1)$.
In addition, we show that for $\Diff_c(\R^n)$, if the diameter is not zero, it is infinite.
\end{abstract}

\tableofcontents
\section{Introduction and main results}

Right-invariant Sobolev metrics on diffeomorphism groups (or on subgroups thereof) arise naturally in several contexts --- they play a central role in mathematical shape analysis, appear in symplectic geometry and their geodesic equations turn out to be related to several important partial differential equations in hydrodynamics (some more details are given in Section~\ref{sec:background_right_inv} below).

The basic setting is the following (see Section~\ref{sec:Sobolev} for details):
The diffeomorphism group $\Diff(\M)$ of a compact manifold $\M$ is a Lie group, whose associated Lie algebra is the space of vector fields $\mathcal X(\M)$. 
One can equip the diffeomorphism group with right-invariant Finsler metrics, by considering  norms on the Lie algebra $\mathcal X(\M)$;
in this article we will focus on Sobolev norms $W^{s,p}$.
For $p=2$, these norms induce Riemannian metrics on the group $\Diff(\M)$; 
these are the most important metrics for hydrodynamics and shape analysis.
The corresponding geodesic distance between $\varphi_0,\varphi_1\in \Diff(\M)$ induced by this metric is then given by the variational problem
\[
\dist_{s,p}(\varphi_0,\varphi_1):=\underset{\substack{\varphi:[0,1]\to \Diff_c(\M)\\ \varphi(0)=\varphi_0,\,\varphi(1)=\varphi_1}}{\operatorname{inf}} \int_0^1 \|\partial_t \varphi \circ \varphi^{-1}\|_{W^{s,p}}.
\]
If the metric is weak enough, then this distance vanishes identically on every connected component; this phenomenon was first shown by Michor and Mumford \cite{michor2005vanishing}, and was then analyzed in a series of works, culminating in a complete characterization in \cite{jerrard2019geodesic}.
In this article we study  a finer property of the geodesic distance, namely, the diameter of (connected components of) diffeomorphism groups with respect to the geodesic distance induced by these metrics:
\[
\diam_{s,p} \Diff(\M) := \underset{\varphi_0,\varphi_1 \in \Diff(\M)}{\operatorname{sup}}\dist_{s,p}(\varphi_0,\varphi_1).
\]
Our main result is the following characterization of boundedness/unboundedness of this diameter: 

\begin{theorem}\label{thm:diam_compactM}
Let  $\M$ be a compact  manifold without boundary of dimension $n\ge 1$.
Then the diameter of the connected component of the identity of the group of smooth diffeomorphisms with respect to the right-invariant $W^{s,p}$-metric is
\begin{enumerate}
\item zero when $s\le \frac{n}{p}$ and $s<1$, i.e.,  $\diam_{s,p} \Diff(\M)=0$; \;
\item for $M=S^n$, bounded but non-zero for $s\in (\frac{n}{p},1+\frac{1}{p})\cup [1,1+\frac{1}{p})$, i.e., $\diam_{s,p} \Diff(S^n) \in (0,\infty)$;
\item unbounded for $s \geq 1+\frac{n}{p}$, i.e.,   $\diam_{s,p} \Diff(\M)=\infty$.
\end{enumerate}
\end{theorem}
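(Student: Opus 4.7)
The three claims are essentially independent and I would attack them separately. Claim (1) is immediate from prior work: \cite{michor2005vanishing, jerrard2019geodesic} establish that under precisely the hypotheses $sp\le n$, $s<1$, the distance $\dist_{s,p}$ vanishes identically on each connected component, so the diameter is zero.

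For claim (3), my plan is to bound the length of any path from below by the logarithm of a geometric quantity that is unbounded over $\Diff(\M)$. When $(s-1)p>n$, Sobolev embedding gives $\|DX\|_{L^\infty}\lesssim \|X\|_{W^{s,p}}$. Along a path $\varphi(t)$ with velocity $X(t)=\partial_t\varphi\circ\varphi^{-1}$ the differential evolves by $\partial_t D\varphi=(DX\circ\varphi)\,D\varphi$, so Gronwall yields
\[
\log\|D\varphi(1)\|_{L^\infty}\le \int_0^1 \|DX\|_{L^\infty}\,dt \le C\int_0^1\|X(t)\|_{W^{s,p}}\,dt.
\]
Picking $\varphi$ from an explicit sequence whose differential blows up in sup norm, for instance diffeomorphisms that compress a fixed spherical cap into smaller and smaller caps, then forces the diameter to be infinite. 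The delicate case is the endpoint $(s-1)p=n$, where the embedding into $L^\infty$ fails; there I would substitute an Orlicz or Trudinger--Brezis--Wainger type estimate that introduces at worst a logarithmic factor still compatible with the blow-up argument. I expect this borderline case to be the main obstacle.

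For claim (2) the task is opposite: to construct, for every $\varphi\in\Diff(S^n)$, a path from $\id$ to $\varphi$ of length bounded \emph{independently} of $\varphi$. The enabling fact is that when $(s-1)p<1$, compressing a vector field into a small region does not blow up its $W^{s,p}$ norm, so an $O(1)$ displacement can be realized by a bump supported on an arbitrarily small set at bounded cost. Concretely, I would first use transitivity of $SO(n+1)$ on $S^n$ to compose $\varphi$ with a rotation so that it fixes a base point $p_0$; rotations can be connected to $\id$ within the compact group $SO(n+1)$ at uniformly bounded cost. Then, in a stereographic chart at $p_0$, I would conjugate $\varphi$ by a family of scalings that crushes its support into a small cap, and dispose of the resulting small-support diffeomorphism with a path whose $W^{s,p}$-length is controlled by the favourable scaling behaviour of Sobolev norms in the regime $(s-1)p<1$. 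Verifying that these scaling exponents indeed yield a uniform bound, independent of $\varphi$, is the technical heart of this part.
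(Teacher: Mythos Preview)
Your plan for claim (1) and the supercritical part of claim (3) matches the paper exactly. The two substantive gaps are in the critical endpoint $(s-1)p=n$ of claim (3) and in the reduction step for claim (2).

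For the critical case, the Trudinger/Brezis--Wainger route does not go through as stated: those inequalities bound $\|DX\|_{L^\infty}$ only at the price of a logarithm of a \emph{higher} Sobolev norm of $X$, and along an arbitrary path in $\Diff(\M)$ you control only $\int_0^1\|X_t\|_{W^{s,p}}\,dt$, nothing stronger. The paper uses a genuinely different mechanism. For each $q<\infty$ it introduces the degenerate right-invariant metric $F_\varphi(h)=\|\div(h\circ\varphi^{-1})\|_{L^q}$ and shows (extending Lenells) that $\varphi\mapsto q|D\varphi|^{1/q}$ is an isometric immersion onto an open subset of the $L^q$-sphere of radius $q\,\Vol(\M)^{1/q}$, whose intrinsic diameter is of order $q$. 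Since $\|\cdot\|_{W^{1,q}}$ dominates $F$, this gives $\diam_{1,q}\Diff(\M)\gtrsim q$. The sharp embedding constant $\|f\|_{L^q}\le Cq^{1-1/p}\|f\|_{W^{n/p,p}}$ then yields $\diam_{1+n/p,\,p}\gtrsim q^{1/p}\to\infty$ as $q\to\infty$. No pointwise Gronwall is used at the endpoint.

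For claim (2), composing with a rotation so that $\varphi(p_0)=p_0$ does \emph{not} make $\varphi$ compactly supported in any chart: $\varphi$ need not equal the identity on a neighbourhood of $p_0$ (nor of its antipode), so there is no region ``at infinity'' in the stereographic chart on which $\varphi$ is trivial, and the conjugation by dilations is not defined. The paper closes this gap with a \emph{uniform fragmentation} step: for a fixed two-ball cover $\{A,B\}$ of $S^n$, every $\varphi$ factors as a product of at most six diffeomorphisms each supported in $A$ or in $B$; this reduces to bounding $\diam_{s,p}\Diff_c(B_1(\R^n))$. There the scaling bootstrap you sketch does work, in the self-referential form $\dist(\id,\varphi)\le 2C+\lambda^{(s-1)-n/p}\dist(\id,\varphi)$, but only after one proves that the conjugating radial dilations $\Psi_{\lambda,\delta}$ (equal to $x\mapsto\lambda x$ for $|x|\le(1-\delta)/\lambda$) satisfy $\dist_{s,p}(\id,\Psi_{\lambda,\delta})\le C$ uniformly in $\lambda$ and $\delta$. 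This last estimate, carried out via an explicit computation on the affine homotopy, is where the restriction $(s-1)p<1$ (rather than $<n$) actually enters, and it is the technical core of the boundedness proof. Your outline captures the scaling heuristic but omits both the fragmentation reduction and this uniform bound on the dilations.
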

Here in $W^{s,p}$ we denote by $s$ the number of derivatives and by $p$ the exponent. The exact definition of these Sobolev norms, in particular for non-integer $s$, appears in Section~\ref{sec:Sobolev}. 
As mentioned above, the first part of this theorem is known from recent previous results on vanishing geodesic distance (see \cite{jerrard2019geodesic} and the references therein); the second and third parts are the main contributions of this paper. 
More generally, we prove the second part of the theorem for any closed manifold $\M$ that satisfies a certain uniform fragmentation property (see Definition~\ref{def:uni_fragment}), which holds in particular for  spheres.

Note that for $\M=S^1$ our theorem gives a complete characterization of boundedness (unboundedness, resp.) of the diameter of $\Diff(S^1)$, i.e., we have that $\diam_{s,p} \Diff(S^1)$ is bounded for $s<1+\frac{1}{p}$ and unbounded otherwise.
For higher dimensional spheres, there is a gap in the range $s\in \left[1+\frac{1}{p}, 1 + \frac{n}{p}\right)$, in which we do not know whether the diameter is finite or not.
We believe (as explained in Section~\ref{sec:disp_energy_M}), that the diameter is finite in this range, that is, that the transition to infinite diameter happens at $s=1 + \frac{n}{p}$.

If $\M$ is the non-compact space $M=\mathbb R^n$, we prove that the diameter only exhibits two different behaviors: it is either zero or unbounded. This leads to following complete characterization:
\begin{theorem}
\label{thm:diam_diff_R_n}
Let $p\geq 1$. The diameter of $\diam_{s,p} \Diffc(\R^n)$ with respect to the right-invariant $W^{s,p}$-metric is infinite if and only if $s\geq 1$ or $sp> n$. 
In any other case the diameter is zero.
\end{theorem}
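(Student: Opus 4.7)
The theorem splits naturally into a zero-diameter claim ($s<1$ and $sp\leq n$) and an infinite-diameter claim ($s\geq 1$ or $sp>n$). The zero-diameter regime is an immediate consequence of the characterization of vanishing geodesic distance in \cite{jerrard2019geodesic}: in this range $\dist_{s,p}\equiv 0$ on $\Diffc(\R^n)$, so the diameter is zero.

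The infinite-diameter regime $sp>n$ can be handled via the Sobolev embedding $W^{s,p}(\R^n)\hookrightarrow L^\infty(\R^n)$, which supplies a constant $C>0$ with $\|u\|_{L^\infty}\leq C\|u\|_{W^{s,p}}$. For any path $\varphi(t)$ from $\id$ to $\varphi\in\Diffc(\R^n)$ with velocity $u(t)=\partial_t\varphi(t)\circ\varphi(t)^{-1}$, the Lagrangian identity $\varphi(x)-x=\int_0^1 u(t,\varphi(t,x))\,dt$ gives $\|\varphi-\id\|_{L^\infty}\leq C\int_0^1\|u(t)\|_{W^{s,p}}\,dt$, and passing to the infimum yields $\dist_{s,p}(\id,\varphi)\geq C^{-1}\|\varphi-\id\|_{L^\infty}$. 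Since $\R^n$ is unbounded, one can construct $\varphi_k\in\Diffc(\R^n)$ that translates a small ball by $k\,e_1$ and is the identity outside a larger ball, so $\|\varphi_k-\id\|_{L^\infty}\geq k$ and the diameter is infinite.

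For the remaining regime $s\geq 1$ with $sp\leq n$, the $L^\infty$ embedding is not available and the argument must exploit the non-compactness of $\R^n$ in a different way; by monotonicity of the Sobolev norm in $s$ it suffices to treat $s=1$. The plan has two parts. First, one establishes the existence of $\varphi_0\in\Diffc(\R^n)$ with $\dist_{1,p}(\id,\varphi_0)>0$, via a displacement-type inequality obtained by combining the Lagrangian identity with the Jacobian/divergence bound $|\log\det\nabla\varphi(t,x)|\leq\int_0^t|\div u(s,\cdot)|\circ\varphi(s,x)\,ds$ which is available because $u\in W^{1,p}$ controls $\div u$ in $L^p$. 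Second, using the translation invariance of $\|\cdot\|_{W^{1,p}}$ on $\R^n$, one forms compositions $\Phi_N=\varphi^{(1)}\circ\cdots\circ\varphi^{(N)}$ of $N$ translated copies of $\varphi_0$ with pairwise disjoint, well-separated supports, and concludes $\dist_{1,p}(\id,\Phi_N)\gtrsim N^{1/p}\dist_{1,p}(\id,\varphi_0)\to\infty$ via an additivity property of the $p$-action on disjoint supports.

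The main technical obstacle is this last step. For integer $s$ the $W^{s,p}$-norm is local and the $p$-action splits cleanly as a sum over disjoint-support pieces, so the additivity is essentially transparent; but for non-integer $s$ the Gagliardo seminorm is non-local, so one must show (i) that the cross-terms between well-separated copies contribute negligibly when the translation distance is taken sufficiently large, and (ii) that an optimal path from $\id$ to $\Phi_N$ cannot avoid paying the cost on each copy by globally merging and re-separating them during the flow. The assumption $s\geq 1$, which also underlies the displacement inequality in the first part, is what rules out this kind of global rearrangement.
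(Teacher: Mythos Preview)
Your treatment of the zero-diameter regime and of the case $sp>n$ matches the paper's proof exactly.

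For the remaining case $s\ge 1$, the paper takes a different and much shorter route than your translated-copies plan: it uses \emph{scaling} rather than translation. Given any $\vp\in\Diffc(\R^n)$ with $\dist_{1,p}(\id,\vp)>0$ (this positivity is simply cited from prior work), set $\vp^\lambda(x):=\lambda^{-1}\vp(\lambda x)$. The associated vector fields transform as $u_t^\lambda(x)=\lambda^{-1}u_t(\lambda x)$, and by Lemma~\ref{lem:scaling} one has $\|u_t^\lambda\|_{L^p}=\lambda^{-1-n/p}\|u_t\|_{L^p}$ and $\|u_t^\lambda\|_{\dot W^{1,p}}=\lambda^{-n/p}\|u_t\|_{\dot W^{1,p}}$, so for $\lambda<1$ the full norm satisfies $\|u_t^\lambda\|_{W^{1,p}}\ge \lambda^{-n/p}\|u_t\|_{W^{1,p}}$. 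Since $\vp_t\mapsto\vp_t^\lambda$ is a bijection on paths, $\dist_{1,p}(\id,\vp^\lambda)\ge \lambda^{-n/p}\dist_{1,p}(\id,\vp)\to\infty$ as $\lambda\to 0$. This is a two-line argument once the scaling lemma is in hand.

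Your approach, by contrast, has a genuine gap at exactly the point you flag. The claimed inequality $\dist_{1,p}(\id,\Phi_N)\gtrsim N^{1/p}\dist_{1,p}(\id,\vp_0)$ requires a lower bound on $\dist_{1,p}$ that is additive over disjoint-support compositions, and you do not produce one. The ``additivity of the $p$-action on disjoint supports'' holds only for paths whose velocity fields themselves decompose over the $N$ supports at every time; an arbitrary (near-)optimal path from $\id$ to $\Phi_N$ need not respect this decomposition, and your final sentence that $s\ge 1$ ``rules out this kind of global rearrangement'' is an assertion, not an argument. The natural candidate functional $F(\vp)=\|\log\det\nabla\vp\|_{L^p}$, which \emph{is} additive in the required sense, does not obviously satisfy $F(\vp)\le C\,\dist_{1,p}(\id,\vp)$: from $\log\det\nabla\vp(x)=\int_0^1\div u_t(\vp_t(x))\,dt$ and Minkowski you get $F(\vp)\le\int_0^1\|(\div u_t)\circ\vp_t\|_{L^p}\,dt$, but the change of variables introduces the Jacobian of $\vp_t^{-1}$, which is not controlled along the path. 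So the lower bound you need is not available, and the argument does not close. The paper's scaling argument sidesteps all of this.
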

Here $\Diff_c(\R^n)$ denotes the connected component of the identity of the group of compactly supported diffeomorphisms.
Again, the zero diameter part is due to previous work, and the contribution of this paper is the infinite diameter part.

As described below, results on the boundedness/unboundedness of the diameter have been studied for a long time in the context of metrics on symplectomorphisms and volume-preserving diffeomorphisms.
To the best of our knowledge, this work is the first to address this question for the full diffeomorphism group, and to show a transition of zero to finite to infinite diameter for a hierarchy of metrics.

\subsection{Right-invariant Sobolev metrics on diffeomorphism groups --- where they arise\footnote{This is, by no means, an excessive survey.}}
\label{sec:background_right_inv}
The interest in right-invariant metrics on diffeomorphism groups originates from Arnold's seminal observation~\cite{arnold1966geometrie} that Euler's equation for the motion of an incompressible fluid admits a geometric interpretation in this setup: it is the geodesic equation of the right-invariant $L^2$-metric on the group of all volume preserving diffeomorphisms (we will refer to this group also as volumorphism group).
Subsequent to Arnold's geometric interpretations for Euler's equation similar formulations have been found for several other partial differential equations that are of relevance in the field of mathematical physics; examples include 
\begin{itemize}
\item the Camassa--Holm equation \cite{CH1993,Mis1998,Kou1999}, which corresponds to the $W^{1,2}$-metric on $\Diff(S^1)$; 
\item the Hunter--Saxton equation~\cite{HS1991,KM2003,lenells2007hunter,Len2008}, as the geodesic equation of the homogeneous $W^{1,2}$-metric on $\Diff(S^1)/S^1$; and more generally the $p$-Hunter-Saxton equation, the geodesic equation of the homogeneous $W^{1,p}$-metric on $\Diff(S^1)/S^1$, as introduced recently by Cotter et al.~\cite{Cotter_rHunterSaxton}.

\item the modified Constantin--Lax--Majda equation \cite{CLM1985,Wun2010,EKW2012,BKP2016}  corresponding to the homogeneous $W^{1/2,2}$-metric on the same space. See, e.g., \cite{TV2011} and the references therein for further examples of Euler--Arnold equations, that are of relevance in mathematical physics.
\end{itemize} 

An additional motivation for the study of (higher order) right-invariant metrics on the full diffeomorphism group stems from their central role in the field of mathematical shape analysis, where differences between objects such as point clouds, images, surfaces, or densities are encoded in the spirit of Grenander's pattern analysis~\cite{grenander1993general, mumford2010pattern,grenander1998computational} by the cost of the minimal (diffeomorphic) transformation that (approximately) transports a source shape to a target shape. 
Using a right-invariant metric on the diffeomorphism group to measure the cost of these diffeomorphic transformations yields the so-called LDDMM-setting~\cite{miller2002metrics,beg2005computing,joshi2000landmark,younes2010shapes,bauer2014overview}, which has proven successful in numerous applications in computational anatomy and medical imaging.

In yet another important line of research, right-invariant Sobolev metrics play a role in symplectic and contact geometry, starting from the Hofer metric on Hamiltonian symplectomorphisms \cite{Hof90}, which is in this context a right-invariant $W^{-1,\infty}$ metric (which is also a bi-invariant metric).

\subsection{Previous results on the geometry induced by right-invariant Sobolev metrics}
The geodesic equations of right-invariant metrics, as they are related to many important partial differential equations, have been studied extensively, starting from Ebin and Marsden~\cite{ebin1970groups} who obtained local well-posedness and stability results for solutions to Euler's equation by studying the geodesic spray of the right-invariant $L^2$-metric on volumorphisms.
Subsequently, local well-posedness results (and sometimes even global existence), have been obtained, using analogous methods, for geodesic equations on the diffeomorphism group as well~\cite{misiolek2010fredholm,Mis1998,Kou1999,BEK2015,TY2005}. 
See~\cite{Kol2017,BV2014} for an overview on these results.
Furthermore, Preston et al.~\cite{misiolek2010fredholm,khesin2013curvatures} studied the curvature of the corresponding spaces and showed Fredholm properties of the exponential map  for both volumorphisms and diffeomorphism groups.

In addition to the geodesic equation itself, right-invariant Sobolev metrics enable us to measure the lengths of curves, hence they give a structure of a length space on these diffeomorphism groups.
A natural question is then --- is this structure degenerate? That is, can the distance between two distinct diffeomorphisms be zero (meaning that there are arbitrarily short curves between them)? This is known as the vanishing geodesic distance phenomenon.
On the other hand, one can ask --- can we find two diffeomorphisms that are arbitrarily far away (i.e., the diameter is infinite)? This is the question we address in this paper.
Note that these questions are of importance when this geodesic distance is explicitly used, e.g., as the regularization term in the LDDMM-setting.

The vanishing geodesic distance phenomenon was first shown for Hamiltonian symplectomorphisms \cite{eliashberg1993biinvariant} under $W^{-1,p}$ metrics for $p<\infty$ (in contrast to the Hofer $W^{-1,\infty}$ metric, which is non-degenerate).
This was later extended to stronger metrics in \cite{bauer2018vanishing}.
Similar results were later obtained for contactomorphisms \cite{shelukhin2017hofer}.
In the context of the full diffeomorphism group, the geodesic distance has been first investigated by Michor and Mumford in~\cite{michor2005vanishing}, where they showed the degeneracy (vanishing) of the geodesic distance for the $L^2$-metric and the non-degeneracy for metrics of order $W^{1,2}$ and above. 
These results have been later generalized to fractional $W^{s,p}$-metrics and a complete characterization of vanishing (non-vanishing) geodesic distance for this class of metrics has been obtained~\cite{jerrard2019geodesic,jerrard2018vanishing,bauer2018vanishing,bauer2013geodesic,bauer2013geodesicb}.
The first part of Theorem~\ref{thm:diam_compactM} is essentially this characterization.

The diameter question was initiated by Shnirelman~\cite{shnirel1987geometry,shnirelman1994generalized} who studied the diameter of the volumorphism group $\Diff_{\mu}(\M)$ with respect to the geodesic distance of the $L^2$-metric.
In particular he showed the boundedness of the diameter for contractible manifolds of dimension $\dim \M\geq 3$, and conjectured the unboundedness in the two-dimensional case.\footnote{To be exact, Shnirelman proved the boundedness of the diameter of $\Diff_{\mu}(\M)$ when $\M$ is the three-dimensional cube, but his proof can be modified to show the result for contractible manifolds of dimension $\dim \M\geq 3$, 0
see, e.g., \cite{arnold1999topological,khesin2008geometry}. }
For $\M$ being either a (two-dimensional) surface with boundary or a closed surface of genus $g\geq 2$  this conjecture has been shown to be true by Eliashberg and Ratiu~\cite{eliashberg1991diameter} for any $L^p$ metric, $p\ge 1$.
The case of the torus and the (significantly more complicated) case of $S^2$ were proved in \cite{brandenbursky2017lp}, thus proving infinite diameter for any closed two dimensional surface with respect to the $L^p$ metric.
Eliashberg and Ratiu~\cite{eliashberg1991diameter} also show that some higher dimensional manifolds with non-trivial topology have infinite diameter for this metric.
So far, the analogue of Shnirelman's question regarding boundedness (unboundedness resp.)\ of the diameter of the full diffeomorphism group, has not been investigated. 

\subsection{Main ideas in the proofs}
Interestingly, the techniques used in this paper are completely orthogonal to the ones used for studying the diameter of symplectomorphisms and volumorphisms, in the sense that all our proofs, both of boundedness and unboundedness of diameter, rely on volume change.

For sufficiently strong metrics, such that $W^{s,p}$ embeds in $C^1$ (that is, when $(s-1)p>n$), we show that we can bound from below the geodesic distance $\dist_{s,p}(\id,\vp)$ of a diffeomorphism $\vp$ to the identity by the logarithm of the Jacobian determinant of $\vp$ at any point.
In particular, the distance from the identity to a diffeomorphism with an arbitrarily large volume change at a point is arbitrarily large.
We call this the \emph{supercritical case}.

The \emph{critical case} is the one for which this embedding just fails, namely when $(s-1)p=n$.
Here we extend an idea of Lenells \cite{lenells2007hunter} to construct an isometry from a degenerate $W^{1,q}$-type metric that sees \emph{only} volume changes to the space of smooth functions on $\M$, and from this we obtain a lower bound for the diameter of this metric, which diverges with $q$.
This degenerate $W^{1,q}$-type metric is weaker, for any $q<\infty$, than our critical $W^{s,p}$ metric, and by using the bound above and controlling the Sobolev embedding constants, we obtain the unboundedness of the diameter by letting $q\to \infty$.

In the \emph{subcritical case}, when $(s-1)p<n$, we aim to prove that the diameter is bounded, at least for manifolds with nice fragmentation properties (see Section~\ref{sec:diamM_subcritical}). In these cases the question can be reduced to a local question on the diameter of the diffeomorphism group of the Euclidean ball $\Diffc(B_1(\R^n))$.
We show, by a rescaling argument, that for $(s-1)p<n$ the uniform boundedness of $\dist_{s,p}(\id, \vp)$ for any $\vp\in \Diffc(B_1(\R^n))$ is equivalent to the uniform boundedness of $\dist_{s,p}(\id, \psi_\lambda)$ for a class of diffeomorphisms $\psi_\lambda(x) \approx \lambda x$, as $\lambda \to \infty$.
That is, boundedness of the diameter of an arbitrary, radially symmetric change of volume at a point implies the boundedness of the whole diffeomorphism group.
We then show that when $(s-1)p<1$, $\dist_{s,p}(\id, \psi_\lambda)$ is indeed bounded, and give an indication of the fact that arbitrary changes of volume should be of bounded cost for the whole subcritical case $(s-1)p<n$.

\subsection{Some open questions}
\begin{itemize}
\item As mentioned, for spheres we have a gap in the range $s\in \left[1+\frac{1}{p} , 1+\frac{n}{p}\right)$; in order to bridge it and prove the boundedness of diameter in this range, we need to find a better way to transport the identity to the family of diffeomorphisms $\psi_\lambda$ mentioned above.
\item We do not know whether for other closed manifolds, that do not satisfy our fragmentation assumption, the diameter of $\Diff(\M)$ is finite or not in the subcritical case.
	That is, are there closed manifolds for which the diameter is either zero or infinity?
\item Another open line of work is to extend the analysis to $W^{s,p}$ metrics on volumorphisms and symplectomorphisms, in particular for negative values of $s$.	There it is not known what is the critical case below which the geodesic distance vanishes (it is known to vanish for $s\le -1+\frac{1}{p}$ and  to not vanish for $s\ge 0$, see \cite{bauer2018vanishing}).
	Also, to the best of our knowledge, it is not known if a similar phenomenon as seen here for $\Diff(S^n)$, namely a transition zero$\to$finite$\to$infinite diameter, can occur for symplectomorphisms/volumorphisms on closed manifolds.
\item More generally, it would be interesting to better understand the connections between the metric questions (vanishing geodesic distance, boundedness of diameter) to other geometric properties (having a smooth geodesic spray, Fredholm properties, etc.).
\end{itemize}
\paragraph{The structure of this paper:}
In Section~\ref{sec:Sobolev} we define the (fractional order) Sobolev norms we are considering in this paper, discuss some of their embedding properties, and define the right-invariant metrics they induce on the diffeomorphism group.
In Section~\ref{Sec:diamS1} we discuss the one dimensional case, namely, the full characterization of boundedness/unboundedness of the diameter for $\Diff(S^1)$;
this case already includes most of the key ingredients that are used in the higher dimensional case, which is the content of Section~\ref{sec:diffM_diameter}, in which we complete the proof of Theorem~\ref{thm:diam_compactM}.
Finally, in Section~\ref{sec:diam_R_n}, we discuss the case of $\Diffc(\R^n)$ and prove Theorem~\ref{thm:diam_diff_R_n}.
 
\paragraph{Acknowledgements:}
We would like to thank to Stefan Haller, Philipp Harms, Stephen Preston, Tudor Ratiu and Josef Teichman for various discussions during the work on this paper, and to Meital Maor for her help with the figures.
We are in particular grateful to Kathryn Mann and Tomasz Rybicki for introducing us to the literature on fragmentation and perfectness, and to Bob Jerrard for his continuous and valuable help throughout the work on this project.
This project was initiated during the BIRS workshop "Shape Analysis, Stochastic Geometric Mechanics and Applied Optimal Transport" in December 2018; 
we are grateful to BIRS for their hospitality. M. Bauer was partially supported by NSF-grant 1912037 (collaborative research in connection with NSF-grant 1912030).

\section{Right-invariant $W^{s,p}$-norms on diffeomorphism groups}
\label{sec:Sobolev}

Let $\mathcal N$ be a finite dimensional manifold. We are interested in the connected component of the identity of the group $\Diff_c(\mathcal N)$ of all  compactly supported, smooth diffeomorphisms  on $\mathcal N$, 
where $\mathcal N$ is either a closed manifold, the Euclidean space $\mathbb R^n$ or the $n$-dimensional
ball $B_r(\mathbb R^n)$ of radius $r$ in $\mathbb R^n$. 
In the following, by a slight abuse of notation, we will denote the connected component of the identity by $\Diff_c(\mathcal N)$ as well.

For $\mathcal N=\M$ a closed manifold the requirement of a compact support is redundant,
and we will simply write $\Diff(\M)$. 
The following classical result, see e.g.~\cite{banyaga2013structure},
summarizes the group and manifold structure of this infinite dimensional space:
\begin{theorem}
The space of smooth, compactly supported diffeomorphisms $\Diff_c(\mathcal N)$ is a simple, Fr\'echet Lie-group whose Lie-algebra is the set of compactly supported vector fields
$\mathcal X_c(\mathcal N)=C^{\infty}_c(\mathcal N, T\mathcal N)$.
\end{theorem}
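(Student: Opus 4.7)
The statement packages together several assertions of very different depth, so my plan would be to address them in increasing order of difficulty, leaving the most substantial part (simplicity) for last.

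First, I would establish the group and Fr\'echet manifold structure. The group axioms are immediate since if $\vp, \psi$ have supports contained in compact sets $K_1, K_2$, then $\vp \circ \psi$ and $\vp^{-1}$ are smooth diffeomorphisms with supports in $K_1 \cup \psi^{-1}(K_1) \cup K_2$ (still compact). For the manifold structure I would pick any complete Riemannian metric on $\mathcal N$ and use its exponential map to build charts around the identity: a vector field $X \in \mathcal X_c(\mathcal N)$ maps to the diffeomorphism $p \mapsto \exp_p(X(p))$, which is a diffeomorphism provided $X$ lies in a sufficiently small $C^1$-neighborhood of zero. Pulling this chart around by right translations yields charts at every point, and the Fr\'echet topology is the inductive limit over compact exhaustions of the $C^\infty$-topologies on $C^\infty_K(\mathcal N, T\mathcal N)$.

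The Lie group structure then requires smoothness of composition and inversion in this Fr\'echet setting. I would invoke the convenient calculus framework of Kriegl--Michor, where composition on $\Diff_c(\mathcal N)$ is shown to be smooth using the fact that evaluation and differentiation of smooth maps are smooth bilinear operations on the relevant spaces of smooth sections; smoothness of inversion follows from the standard implicit/inverse function argument available in this convenient setting. Computing the tangent space at the identity via the chart above identifies $T_{\id}\Diff_c(\mathcal N)$ with $\mathcal X_c(\mathcal N)$, and the bracket inherited from the group structure is (up to sign, depending on convention) the ordinary Lie bracket of vector fields.

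The genuinely hard part is simplicity, i.e., the absence of non-trivial normal subgroups in the identity component. This is the classical Thurston--Mather--Epstein theorem, and I would follow the standard two-step attack: (i) the \emph{fragmentation lemma}, asserting that any $\vp\in \Diff_c(\mathcal N)$ (isotopic to the identity) can be written as a finite product $\vp = \vp_1 \cdots \vp_k$ with each $\vp_i$ supported in a chart; and (ii) \emph{perfectness}, namely that every $\vp\in \Diff_c(\mathcal N)$ is a product of commutators. Fragmentation is proved by cutting an isotopy from $\id$ to $\vp$ into short pieces and using a partition of unity; perfectness on balls is proved by Mather's trick, writing a compactly supported diffeomorphism as an infinite product conjugated by a contraction so that it becomes a single commutator. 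Once perfectness and fragmentation are in place, a standard argument shows that any non-trivial normal subgroup $N \trianglelefteq \Diff_c(\mathcal N)$ must contain diffeomorphisms supported in arbitrarily small balls (by commuting with suitable elements), and then by fragmentation and perfectness it must be all of $\Diff_c(\mathcal N)$.

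The main obstacle is clearly step (ii) above --- Mather's trick for perfectness in the $C^\infty$ category is delicate and dimension-dependent, and it is for this reason that the result is invoked as a black-box from \cite{banyaga2013structure} rather than reproved. Since the theorem is stated as a summary of classical facts, I would ultimately just cite Banyaga's monograph for the full argument rather than redo these several substantial theorems.
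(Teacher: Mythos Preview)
The paper does not prove this theorem at all --- it is stated as a classical result with a citation to Banyaga's monograph \cite{banyaga2013structure}, and immediately afterwards the paper even notes that the ``Fr\'echet'' claim is only literally true in the compact case (in the non-compact case $\Diff_c(\mathcal N)$ is modeled on an (LF)-space). Your proposal correctly identifies the main ingredients (manifold structure via the exponential chart, smoothness of composition/inversion in the convenient setting, fragmentation, and Mather--Thurston perfectness for simplicity) and arrives at the same endpoint as the paper: cite Banyaga rather than reprove several deep theorems. So your approach is consistent with, and in fact more detailed than, what the paper does.
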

When $\mathcal N$ is compact, then $\Diff_c(\mathcal N)= \Diff(\mathcal N)$ is a Fr\'echet manifold and hence a Fr\'echet Lie-group~\cite{leslie1967};
in the non-compact case, this is no longer true but $\Diff_c(\mathcal N)$ can be modeled on an (LF)-space or a convenient vector space \cite[\S 43]{KM97}.
This subtlety will not be relevant for the subsequent analysis.

In Section~\ref{Sec:diamS1} we will be in addition interested in the homogeneous space of all smooth diffeomorphisms of the circle $S^1$ modulo translations, which we will identify with the set of all diffeomorpisms that fix the point $0\in S^1$, i.e., 
\[\Diff(S^1)/S^1\sim \left\{
\varphi \in \Diff(S^1): \varphi(0)=0
\right\},\]
where we identified the circle $S^1$ with the interval $[0,1]$.

\subsection{Fractional order Sobolev spaces on $\mathbb R^n$}
To introduce the class of right-invariant $W^{s,p}$-norms on the diffeomorphism group we will 
start by introducing the fractional order Sobolev spaces $W^{s,p}(\mathbb R^n)$ for real valued functions on $\mathbb R^n$.
There are several constructions of fractional order Sobolev spaces, which typically coincide in the important Hilbert case $p=2$.
Here we will use the Gagliardo-seminorm (also known as Slobodeckij seminorm) approach, resulting in the so called Sobolev--Slobodeckij spaces.

Let $p\in (1,\infty)$. 
For a function $f: \mathbb R^n \to \mathbb R^d$ and $s=k+\sigma$ with $k\in \mathbb N$ and $\sigma\in (0,1)$ we define the homogeneous $\dot W^{s,p}$-norm using the Gagliardo-seminorm via
\begin{equation}
\|f\|_{\dot W^{s,p}}=
\left(\iint_{\R^n\times \R^n} \frac{|D^kf(x) - D^kf(y)|^p}{|x-y|^{n+\sigma p}}\,dx\,dy \right)^{1/p},
\end{equation}
where $D^kf$ denotes the $k$-th differential of $f$.
We extend this definition to the full $W^{s,p}$-norm by adding the $L^p$-norm of the function, i.e.,
\begin{equation}
\|f\|_{W^{s,p}}=\|f\|_{W^{k,p}} +\|f\|_{\dot W^{s,p}}.
\end{equation} 

The fractional order Sobolev spaces $W^{s,p}(\R^n)$, as defined above, satisfy the Sobolev embedding theorem, i.e.,
$W^{s,p}(\mathbb R^n)$ embeds in $C^{0}(\mathbb R^n)$ iff $sp>n$ (see, e.g., \cite[Theorem~3.7]{behzadan2015multiplication}).
The following lemma deals with exact estimates for embeddings in $L^q$-spaces for the critical case $sp=n$:
\begin{lemma}\label{lem:critical_embedding}
Let $p\in(1,\infty)$. Then there exists $C=C(p,n)$ such that for every $f\in W^{n/p,p}(\R^n)$,
\[
\|f\|_{L^q(\R^n)} \le C\|f\|_{W^{n/p,p}(\R^n)}q^{1-\frac{1}{p}}, \quad \forall q\in [p,\infty).
\]
\end{lemma}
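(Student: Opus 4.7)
The plan is to deduce the lemma from a fractional Moser--Trudinger (exponential integrability) inequality on $\R^n$, combined with a Taylor expansion of the exponential to extract sharp $L^q$-estimates. Concretely, for the critical exponent $s=n/p$, there is a classical inequality (Adams/Kozono--Wadade/Martinazzi type) asserting the existence of constants $\alpha=\alpha(n,p)>0$ and $C_0=C_0(n,p)$ such that, for every $f\in W^{n/p,p}(\R^n)$ with $\|f\|_{W^{n/p,p}}\le 1$,
\[
\int_{\R^n}\Phi_{p}\bigl(\alpha |f(x)|^{p'}\bigr)\,dx\le C_0,
\]
where $p'=p/(p-1)$ and $\Phi_p(t)=e^t-\sum_{j<j_0}t^j/j!$ is a truncated exponential (the truncation is needed so that the integral is finite on $\R^n$, not just on bounded domains). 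I would quote this as a black-box input and spend no time reproving it.

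Next I would expand the power series: since every term $\alpha^k|f|^{kp'}/k!$ with $k\ge j_0$ is pointwise nonnegative and bounded above by $\Phi_p(\alpha|f|^{p'})$, the integral of each such term is at most $C_0$. Rearranging,
\[
\|f\|_{L^{kp'}}^{kp'}\le C_0\,\alpha^{-k} k!\qquad\text{for all integers }k\ge j_0.
\]
Applying Stirling, $(k!)^{1/(kp')}\le C_1\, k^{1/p'}=C_1\,k^{1-1/p}$, so $\|f\|_{L^{kp'}}\le C_2\, k^{1-1/p}$. For a general $q\ge p$, I would pick the integer $k\ge j_0$ with $kp'\le q\le (k+1)p'$ (treating $q\in[p, j_0 p']$ separately by the standard continuous embedding $W^{n/p,p}\hookrightarrow L^q$ for $q$ in a bounded range), and use the logarithmic convexity of $L^r$-norms,
\[
\|f\|_{L^q}\le \|f\|_{L^{kp'}}^{1-\theta}\,\|f\|_{L^{(k+1)p'}}^{\theta},
\]
to obtain $\|f\|_{L^q}\le C_3\,(k+1)^{1-1/p}\le C_4\, q^{1-1/p}$. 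Undoing the normalization $\|f\|_{W^{n/p,p}}=1$ by homogeneity yields exactly the stated inequality.

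The principal obstacle is not the computation but making sure the correct variant of the Moser--Trudinger inequality is invoked: one needs the \emph{fractional} inequality (since $n/p$ need not be an integer), on the \emph{unbounded} domain $\R^n$ (which forces the truncated-exponential formulation), with a \emph{uniform} Trudinger constant $\alpha$ depending only on $(n,p)$. Once such a statement is cited from the literature, the extraction of the $q^{1-1/p}$ growth is a routine application of Stirling and log-convexity, and the exponent $1-1/p=1/p'$ is exactly the one dictated by the conjugate relation between exponential integrability of order $p'$ and polynomial growth of $L^q$-norms.
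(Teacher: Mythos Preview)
Your approach is correct, but it is genuinely different from the paper's. The paper does not prove the inequality at all; it simply identifies it as a known result in the literature: for integer $s=n/p$ it points to \cite[Theorem~12.33]{leoni2017first}, and for non-integer $s$ it observes that the Gagliardo spaces coincide with the Besov spaces $B^s_{p,p}$ and invokes \cite[Theorem~9.1]{peetre1966espaces}, where exactly this growth rate in $q$ is established. So the paper's ``proof'' is two citations.

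Your route instead starts from a fractional Moser--Trudinger inequality on $\R^n$ (also a nontrivial black box, of comparable depth to Peetre's result) and then extracts the $q^{1-1/p}$ growth by expanding the truncated exponential and applying Stirling and log-convexity of $L^q$-norms. This is a legitimate and standard way to pass between exponential integrability of order $p'$ and polynomial growth of order $q^{1/p'}$ in the $L^q$-scale; the exponent $1-1/p$ falls out transparently from the conjugacy. The advantage of your argument is that it makes the mechanism visible, rather than hiding it behind a citation; the disadvantage is that it trades one literature reference (Peetre) for another (a sharp fractional Trudinger inequality on $\R^n$, e.g.\ Ozawa or Kozono--Wadade), so no real economy is gained. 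Both approaches are equally rigorous once the cited input is granted.
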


\begin{proof}
For $s=n/p$ being an integer see \cite[Theorem~12.33]{leoni2017first}.
When $s$ is not an integer, then the Gagliardo (Sobolev--Slobodeckij) spaces we consider here are equivalent to the Besov spaces $B^s_{p,p}(\R^n)$, i.e., the interpolation space $(L^p(\R^n),W^{N,p}(\R^n))_{\frac{s}{N},p}$ for $N>s$ (see e.g., 
\cite[Section~2.5.1, Remark~4]{Tri78}, or \cite[Theorem~3.1]{behzadan2015multiplication}.).
The result for $(L^p(\R^n),W^{N,p}(\R^n))_{\frac{s}{N},p}$ is the content of \cite[Theorem~9.1]{peetre1966espaces}, in which a more general statement is shown and where the case treated here corresponds to $p=r$ in the notation of \cite{peetre1966espaces}.
\end{proof}

Finally we state the behavior of the $W^{s,p}$-norm with respect to scalings:
\begin{lemma}\label{lem:scaling}
Let $p\in(1,\infty)$ and let $\lambda\in \mathbb R_{>0}$. For any $f\in W^{s,p}(\R^n)$ let
$f^\lambda$ denote the function
\[f^\lambda(x)=\frac{1}{\lambda}f(\lambda x)\;.\]
We then have
\[
\|f^\lambda\|_{\dot W^{s,p}} = \lambda^{(s-1)-\frac{n}{p}} \|f\|_{\dot W^{s,p}},\qquad 
\|f^\lambda\|_{L^p} = \lambda^{-1-\frac{n}{p}} \|f\|_{L^p}\,.
\]
\end{lemma}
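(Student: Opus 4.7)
The plan is a direct change-of-variables computation; there is no real obstacle, only bookkeeping of the scaling exponents arising from the Gagliardo integral.

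I would first dispose of the $L^p$ claim. By the definition of $f^\lambda$ we have
\[
\|f^\lambda\|_{L^p}^p = \int_{\R^n}\lambda^{-p}|f(\lambda x)|^p\,dx,
\]
and the substitution $y=\lambda x$ introduces a factor $\lambda^{-n}$, giving $\|f^\lambda\|_{L^p}^p=\lambda^{-p-n}\|f\|_{L^p}^p$, which is exactly the desired identity after taking $p$-th roots.

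For the homogeneous norm, I would write $s=k+\sigma$ with $k\in\N$ and $\sigma\in(0,1)$ and apply the chain rule $k$ times to obtain
\[
D^k f^\lambda(x) = \lambda^{k-1} (D^k f)(\lambda x).
\]
Plugging this into the Gagliardo seminorm and then performing the change of variables $u=\lambda x$, $v=\lambda y$ (with Jacobian $\lambda^{2n}$ and $|x-y|=\lambda^{-1}|u-v|$) yields
\[
\|f^\lambda\|_{\dot W^{s,p}}^p = \lambda^{(k-1)p}\cdot\lambda^{n+\sigma p}\cdot\lambda^{-2n}\,\|f\|_{\dot W^{s,p}}^p = \lambda^{(s-1)p-n}\,\|f\|_{\dot W^{s,p}}^p.
\]
Taking $p$-th roots gives the claimed exponent $(s-1)-\tfrac{n}{p}$.

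If one wishes to cover the integer case as well (where the definition reduces to $\|D^k f\|_{L^p}$), the same chain-rule identity for $D^k f^\lambda$ combined with the $L^p$ computation of the first paragraph yields the identical exponent, so no separate argument is needed. The one subtle point to double-check is the bookkeeping of the Jacobian factors in the Gagliardo change of variables, since three powers of $\lambda$ combine — $\lambda^{(k-1)p}$ from the differential, $\lambda^{n+\sigma p}$ from the kernel, and $\lambda^{-2n}$ from the Lebesgue measures — but these collapse cleanly to $\lambda^{(s-1)p-n}$, which is the assertion.
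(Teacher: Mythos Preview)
Your proof is correct and follows exactly the approach indicated in the paper, which simply states that the result follows immediately by the chain rule and a change of variables in the Gagliardo seminorm (or the standard Sobolev norm). You have merely written out the bookkeeping in detail.
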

\begin{proof}
This follows  immediately by the chain rule and changing variables in the standard Sobolev norm or the Gagliardo seminorm, depending on $s$.
\end{proof}

\subsection{Fractional order Sobolev norms on Riemannian manifolds}\label{sec:Sobolev_manifolds}
We now introduce the corresponding space of real valued functions $W^{s,p}(\M)$ for $\M$ a (compact) Riemannian manifold. 
Following \cite[Sect.~7.2.1]{Triebel1992} let $B_\epsilon(x)$ denote the ball of radius $\epsilon$ with center $x$. We can then choose a finite cover of $\M$ by balls $B_\epsilon(x_\alpha)$ with $\epsilon$ sufficiently small, such that normal coordinates are defined in the ball $B_\epsilon(x)$, and a partition of unity $\rho_\alpha$, subordinated to this cover. Using this data we define the $W^{s,p}$-norm of a function $f$ on $M$ via
\begin{align*}
\| f \|_{W^{s,p}(M,g)}^2 &= \sum_{\alpha} \| (\rho_\alpha f)\circ \exp_{x_\alpha}
\|^2_{W^{s,p}(\R^n)}
\end{align*}
Changing the cover or the partition of unity leads to equivalent norms, 
see \cite[Theorem 7.2.3]{Triebel1992} and thus this choice does not matter to us, as we are mainly interested in boundedness (unboundedness, resp.) of the diameter,
a property which remains invariant under equivalent norms.
For integer $s$ and $p=2$ we get norms which are equivalent to the Sobolev norms treated
in \cite[Chapter 2]{Eichhorn2007}. 
The norms depend on the choice of the Riemannian metric $g$, though again, different choices of metrics result in equivalent norms and thus are immaterial to this paper.
This dependence is worked out in detail in \cite{Eichhorn2007}.

\subsection{Right-invariant fractional order Sobolev metrics on diffeomorphism groups}
For vector fields we use the trivialization of the tangent bundle 
that is induced by the coordinate charts and define the norm in each coordinate
as above. 
This leads to a well-defined $W^{s,p}$-norm (up to the equivalence discussed above) on the Lie algebra
$\mathcal X_c(\M)$ of (compactly supported) vector fields on $\M$.
This norm can be extended in the usual way to a right-invariant Finsler metric on the whole diffeomorphism group, i.e., 
for $\vp\in \Diff(\M)$ and $h \in T_\vp \Diff(\M)$,
\begin{equation}
F^{s,p}_{\varphi}(h):=\| h\circ\varphi^{-1}\|_{W^{s,p}}\;,
\end{equation}
where in the right-hand side the norm is the $W^{s,p}$ norm on $\mathcal{X}(\M)$.
In the important case $p=2$ this norm is equivalent to the standard $H^s$ norm that is induced from the inner product $\langle .,.\rangle_{H^s}$, and therefore we obtain a right-invariant Riemannian metric
\begin{equation}
G^{s}_{\varphi}(h,k):=\langle h\circ\varphi^{-1},k\circ{\varphi^{-1}}\rangle_{H^s}\;.
\end{equation}
This is mentioned here for the sake of completeness --- the Riemannian structure will not play a special role in this paper.

Equipping the diffeomorphism group with a Finsler metric gives rise to the corresponding geodesic distance, which is defined in the usual way via
\begin{align*}
\dist_{s,p}(\varphi_0,\varphi_1):=\underset{\varphi}{\operatorname{inf}} \int_0^1 F^{s,p}_{\varphi}(\partial_t \varphi) dt\;,
\end{align*}
where the infimum is taken over all paths $\varphi:[0,1]\to \Diff_c(\M)$ with $\varphi(0)=\varphi_0$ and $\varphi(1)=\varphi_1$.
Using this we can define the diameter of the diffeomorphism group with respect to the metric $F^{s,p}$ to be
\begin{equation}
\diam_{s,p} \Diff(\M) := \underset{\varphi_0,\varphi_1 \in \Diff(\M)}{\operatorname{sup}}\dist_{s,p}(\varphi_0,\varphi_1)= \underset{\varphi \in \Diff(\M)}{\operatorname{sup}}\dist_{s,p}(\operatorname{id},\varphi)\;.
\end{equation}
Here the second equality is due to the right invariance of the Finsler metric and thus of the geodesic distance function. Note, that all these definitions remain valid if the compact manifold $\M$ is replaced by the non-compact space $\mathbb R^n$ or a connected open subset thereof (with $\Diff_c(\R^n)$ and $\mathcal{X}_c(\R^n)$ instead of $\Diff(\M)$ and $\mathcal{X}(\M)$).

The study of the geodesic distance --- and thus of the diameter --- is closely related to the study of the displacement energy \cite{eliashberg1993biinvariant}, which in our context is defined as follows:
\begin{definition}\label{def:displacement_energy}
Given a manifold $\M$, the displacement energy of a set $A\subset \M$ with respect to the $W^{s,p}$-metric is
\[
E_{s,p}(A) = \inf\BRK{\dist_{s,p}(\id,\vp) \,:\, \vp \in \Diff(\M), \,\vp(A)\cap A = \emptyset}.
\]
\end{definition}
In fact it turns out that the geodesic distance collapses if and only if there exists an open set of zero displacement energy (see, e.g., \cite{eliashberg1993biinvariant,shelukhin2017hofer,bauer2018vanishing, jerrard2019geodesic}).
This provides an important tool for studying vanishing distance phenomena.
One could hope for a similar relation between  bounded displacement energy and finite diameter, which is the geometric property that we aim to study in this article.
A result of this type, in general settings, appears in Appendix~\ref{sec:displacement_energy}, however its assumptions are too restrictive for our applications to diffeomorphism groups (see Lemma~\ref{counter:left_translations_H1}).
Nevertheless, we do analyze the displacement energy, as it still provides some insight on the diameter.

\section{The diameter of $\Diff(S^1)$}\label{Sec:diamS1}
The aim of this section is to prove the following complete characterization of boundedness (unboundedness, resp.) of the diameter of the diffeomorphism group of the circle $S^1$ with respect to  right-invariant 
$W^{s,p}$-norms:
\begin{theorem}
\label{thm:diam_S_1}
Let $p\in(1,\infty)$. The diameter $\diam_{s,p} \Diff(S^1)$ of the diffeomorphism group of $S^1$ is zero for $s\leq 1/p$, bounded (but non-zero) for $1/p< s < 1+1/p$, and unbounded for $s\ge 1+1/p$. 
\end{theorem}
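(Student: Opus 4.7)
The plan is to split the parameter range into the three cases of the theorem, each handled by a distinct mechanism. The case $s\le 1/p$ is already settled: $sp\le 1=\dim S^1$ and $s<1$ put us in the vanishing-distance regime of \cite{jerrard2019geodesic}, so the diameter is trivially zero. In every other case the starting point is a Sobolev embedding, used in three qualitatively different ways depending on how critical the exponents are.

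In the bounded-but-nonzero range $1/p<s<1+1/p$, non-vanishing is immediate from $W^{s,p}(S^1)\hookrightarrow C^0(S^1)$: integrating $u_t=\partial_t\varphi\circ\varphi^{-1}$ over $t\in[0,1]$ yields $\|\varphi-\id\|_\infty\le C\,\dist_{s,p}(\id,\varphi)$, so any fixed non-trivial $\varphi$ has positive distance from the identity. The universal upper bound is the harder direction. The strategy is fragmentation plus rescaling: every $\varphi\in\Diff(S^1)$ is a product of boundedly many diffeomorphisms, each supported in a proper arc that can be identified with an element of $\Diffc((0,1))$, so it suffices to bound $\dist_{s,p}(\id,\varphi)$ uniformly over $\Diffc((0,1))$. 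Lemma~\ref{lem:scaling} implies that if a path has velocity $u$ then its compressed counterpart, obtained by replacing $\varphi(t,x)$ by $\varphi(t,\lambda x)/\lambda$, has velocity $u^\lambda$ with $\dot W^{s,p}$ norm $\lambda^{(s-1)-1/p}\|u\|_{\dot W^{s,p}}$ and $L^p$ norm $\lambda^{-1-1/p}\|u\|_{L^p}$. Since $(s-1)-1/p<0$ throughout this regime, compressing is cheap, and the question reduces (as foreshadowed in Section~1.3) to constructing an explicit bounded-length path from $\id$ to a ``stretching'' model diffeomorphism $\psi_\lambda(x)\approx\lambda x$; together with rescaling and fragmentation this then controls the whole group.

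The unboundedness for $s\ge 1+1/p$ is established by exhibiting a sequence of diffeomorphisms with diverging distance from the identity, and splits into two sub-cases. In the supercritical case $s>1+1/p$, $W^{s,p}$ embeds into $C^1$; the ODE $\tfrac{d}{dt}\log\partial_x\varphi(t,x)=(\partial_x u_t)(\varphi(t,x))$ combined with $\|\partial_x u_t\|_\infty\lesssim\|u_t\|_{W^{s,p}}$ yields
\begin{equation*}
\bigl\|\log\partial_x\varphi\bigr\|_{L^\infty} \;\le\; C\,\dist_{s,p}(\id,\varphi),
\end{equation*}
so any $\varphi_j$ with $\partial_x\varphi_j(x_0)\to 0$ at some $x_0$ does the job. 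The critical case $s=1+1/p$ is the main obstacle, since the $C^1$-embedding fails by a hair. The plan is to introduce the degenerate right-invariant Finsler metric $F_q(u)=\|u'\|_{L^q}$ on $\Diff(S^1)/S^1$ as an intermediate, with $q<\infty$ a free parameter. A direct chain-rule computation, extending an idea of Lenells~\cite{lenells2007hunter}, shows that $R_q(\varphi):=q(\partial_x\varphi)^{1/q}$ is an isometric embedding of $(\Diff(S^1)/S^1,F_q)$ into $L^q(S^1)$, with image lying on the $L^q$-sphere of radius $q$; hence $\dist_{F_q}(\id,\varphi)\ge\|R_q(\varphi)-q\|_{L^q}$. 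Choosing $\varphi_q$ fixing $0$ whose derivative is of order $e^q$ on an interval of length of order $e^{-q}$ and of order $1/2$ elsewhere, a direct computation gives $\|R_q(\varphi_q)-q\|_{L^q}\gtrsim q$. On the other hand, Lemma~\ref{lem:critical_embedding} applied on $S^1$ to $u'\in W^{s-1,p}=W^{1/p,p}$ gives $\|u'\|_{L^q}\le Cq^{1-1/p}\|u\|_{W^{s,p}}$, i.e.\ $F_q\le Cq^{1-1/p}\|\cdot\|_{W^{s,p}}$. Combining these two estimates,
\begin{equation*}
\dist_{s,p}(\id,\varphi_q)\;\ge\;C^{-1}q^{-(1-1/p)}\dist_{F_q}(\id,\varphi_q)\;\gtrsim\;q^{1/p}\xrightarrow[q\to\infty]{}\infty,
\end{equation*}
and pushing this lower bound from $\Diff(S^1)/S^1$ up to $\Diff(S^1)$ via the $1$-Lipschitz quotient projection completes the case. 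The steps that will demand the most care are (a) the construction of the bounded-length path to $\psi_\lambda$ in the subcritical regime, and (b) the rigorous verification of the Lenells-type isometry and the careful tracking of the $q$-dependent embedding constants that make the final estimate balance at the critical exponent.
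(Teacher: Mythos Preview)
Your proposal is correct and follows essentially the same route as the paper: the zero case via the vanishing-distance results, the supercritical case via the $C^1$-embedding and the ODE for $\log\partial_x\varphi$, the critical case via the Lenells-type isometry $\varphi\mapsto q(\partial_x\varphi)^{1/q}$ combined with the $q$-dependent embedding constants of Lemma~\ref{lem:critical_embedding}, and the subcritical boundedness via fragmentation into arcs, conjugation by the stretching maps $\psi_\lambda$, and the scaling exponent $(s-1)-1/p<0$. The two steps you single out as delicate --- the explicit bounded-length path to $\psi_\lambda$ and the careful bookkeeping of constants in the critical case --- are exactly the ones the paper treats in most detail (Lemma~\ref{lem:psikS1}/Appendix~\ref{app:technical_lemma} and Lemma~\ref{lem:Wsp_sphere}, respectively).
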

Note, that for the important special case $p=2$ this shows that the diameter $\diam_{s,2} \Diff(S^1)$ of the $H^s$-metric is zero for $s\leq \frac12$, bounded (but non-zero) for $s<3/2$, and unbounded for $s\geq\frac32$. 
For the case $p=1$, the only change is that the diameter is finite but non-zero for $s=1$ (due to the fact that $W^{1,1}$ embeds in $C^0$ in the one dimensional case).

\begin{proof}
The zero diameter result for $s\le \frac1{p}$ ($s<1$ for $p=1$) follows directly from  the results on vanishing geodesic distance in~\cite{jerrard2018vanishing,jerrard2019geodesic,bauer2018vanishing}.
We will split the proof of the remaining cases in three parts: the subcritical case $s<1+1/p$, see Section~\ref{sec:diams1_subcritical}, the critical case $s=1+1/p$, see Section~\ref{sec:diams1_critical}, and the supercritical case $s>1+1/p$, see 
Section~\ref{sec:diams1_supercritical}.
\end{proof}

In the proof of the critical case in Section~\ref{sec:diams1_critical} we will show in addition that the diameter of the homogeneous $W^{1,p}$-metric is bounded between $p$ and $8p$. 
Before we present this analysis we want to point out an open question concerning the continuity of the diameter (in the parameter $s$):
\begin{question}
\label{qn:continuity_diameter}
Is the diameter continuous in the Sobolev index $s$ at the critical exponents, i.e., do we have $\lim_{s\to 1/p+} \diam_{s,p} \Diff(S^1) = 0$ and $\lim_{s\to (1 + 1/p)-} \diam_{s,p} \Diff(S^1) = \infty$?
\end{question}

\subsection{The supercritical case $s>1+1/p$.}\label{sec:diams1_supercritical}
The unboundedness for the supercritical case $s\geq1+1/p$ also follows directly from the analysis for the critical case, which is treated in Section~\ref{sec:diams1_critical}. 
In the following we will present a more elementary proof 
that in addition contains an explicit bound for the geodesic distance and will be of importance in the characterization of the displacement energy in Section~\ref{sec:discplacementenergy_diffs1}. 
\begin{lemma}
Let  $s>1+1/p$. Then the geodesic distance of the right-invariant $W^{s,p}$-norm on $\Diff(S^1)$ satisfies
\beq
\label{eq:lower_bound_derivative_dist}
\log \vp'(x) \leq C \dist_{s,p} (\id,\vp)\;,
\eeq
where $C= C(s,p)$ depends on $s$ and $p$. It follows that 
$\diam_{s,p} \Diff(S^1)$ is unbounded.
\end{lemma}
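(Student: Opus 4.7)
The plan is to exploit the Sobolev embedding $W^{s,p}(S^1) \hookrightarrow C^1(S^1)$, which holds precisely because $s > 1 + 1/p$ means $s - 1 > 1/p$ and hence $W^{s-1,p} \hookrightarrow C^0$; this gives a constant $C = C(s,p)$ with $\|u'\|_\infty \le C \|u\|_{W^{s,p}}$ for every $u \in \mathcal X(S^1)$. This is the only analytic input needed beyond the definition of the distance.

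First I would fix any smooth path $\varphi : [0,1] \to \Diff(S^1)$ with $\varphi(0) = \id$ and $\varphi(1) = \varphi$, and introduce its Eulerian velocity $u(t) = \partial_t \varphi(t) \circ \varphi(t)^{-1}$, so that $\partial_t \varphi(t,x) = u(t,\varphi(t,x))$. Differentiating in $x$ gives $\partial_t \varphi'(t,x) = u'(t,\varphi(t,x))\, \varphi'(t,x)$, and since $\varphi'(t,x) > 0$ (diffeomorphism of $S^1$ in the identity component is orientation-preserving), dividing by $\varphi'(t,x)$ yields the pointwise ODE
\[
\partial_t \log \varphi'(t,x) = u'(t,\varphi(t,x)).
\]

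Integrating from $0$ to $1$, using $\log \varphi'(0,x) = 0$, and applying the embedding above,
\[
\log \varphi'(x) \;=\; \int_0^1 u'(t,\varphi(t,x))\,dt \;\le\; \int_0^1 \|u'(t)\|_\infty\,dt \;\le\; C \int_0^1 \|u(t)\|_{W^{s,p}}\,dt.
\]
The right-hand side is exactly the $F^{s,p}$-length of the chosen path. Taking the infimum over all admissible paths gives \eqref{eq:lower_bound_derivative_dist}.

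For the unboundedness statement, I would exhibit a sequence $\varphi_n \in \Diff(S^1)$ whose derivatives blow up at some point (for instance a sequence of smooth diffeomorphisms that on a shrinking interval concentrate more and more mass, so that $\varphi_n'(x_n) \to \infty$ for a suitable $x_n$); inserting into \eqref{eq:lower_bound_derivative_dist} forces $\dist_{s,p}(\id,\varphi_n) \to \infty$. I do not anticipate serious obstacles: the only slightly subtle point is the Sobolev embedding in the fractional regime, which is however classical, and the ODE manipulation works pointwise in $x$ and does not require the path to be geodesic or even smooth in $x$, only enough regularity in $t$ to make the integration in $t$ legitimate --- standard for admissible paths in the variational definition of $\dist_{s,p}$.
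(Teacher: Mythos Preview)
Your proof is correct and follows essentially the same approach as the paper: both use the Sobolev embedding $W^{s,p}(S^1)\hookrightarrow C^1(S^1)$, derive the ODE $\partial_t\log\varphi'(t,x)=u'(t,\varphi(t,x))$, integrate and bound by $\int_0^1\|u_t\|_{W^{s,p}}\,dt$, then take the infimum over paths and conclude unboundedness by choosing $\varphi$ with arbitrarily large derivative at a point.
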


\begin{proof}
Note that in this regime we have the Sobolev embedding $W^{s,p}(S^1) \subset C^1(S^1)$.
Let $\vp_t$ be any curve starting at $\id$ and ending at $\vp$, and let  $u_t$ be the associated vector field, that is $\pl_t\vp_t = u_t\circ \vp_t$.
Denote $\psi_t = \pl_x\vp_t$.
We then have $\pl_t \psi_t = \pl_x u_t\circ \vp_t \cdot \psi_t$, or in other words, $\pl_t (\log \psi_t)(x) = \pl_x u_t(\vp_t(x))$.
Integrating this, and using the fact that $\log \psi_0 = 0$, we have for any $(s-1)p>1$ and any $x\in S^1$,
\beq
\label{eq:lower_bound_derivative}
\log \vp'(x) = \int_0^1 \pl_t (\log \psi_t(x)) \,dt \le \int_0^1 \|\pl_x u_t\|_{L^\infty} \,dt \le C \int_0^1 \|u_t\|_{W^{s,p}} \,dt,
\eeq
where in the last inequality we used the above-mentioned Sobolev embedding.
Since the above inequality holds for all paths $\vp_t$ connecting the idendity to $\vp$ this yields equation~\eqref{eq:lower_bound_derivative_dist}.
By choosing $\vp$ with $\vp'$ arbitrarily large at a point, we get an arbitrarily large lower bound to the diameter of $\Diff(S^1)$ and thus the unboundedness follows.
\end{proof}

\begin{remark}
For $p=1$, the claim and its proof holds also for $s=2$, since $W^{2,1}(S^1)$ embeds into $C^1(S^1)$.
\end{remark}

\subsection{The critical case $s=1+1/p$.}\label{sec:diams1_critical}
In the critical case, we do not have $W^{s,p}(S^1) \subset C^1(S^1)$ as before (unless $p=1$), however we do have $W^{s,p}(S^1) \subset W^{1,q}(S^1)$ for $q<\infty$. 
Inspired by this, the proof for the critical case consists of two steps --- first, we give a lower bound for the diameter with respect to the $\dot W^{1,q}$-metric, which blows up as $q\to \infty$; second, we use Lemma~\ref{lem:critical_embedding} to show that the diameter bound blows up faster than the embedding constants of $W^{s,p}(S^1) \subset W^{1,q}(S^1)$, hence the diameter with respect to $W^{s,p}$ is infinite.

The following lower bound for the diameter with respect to the $\dot W^{1,q}$-metric is based on a generalization of a result of Lenells~\cite{lenells2007hunter}, where he constructed an explicit solution formula for geodesics of the 
homogeneous $\dot W^{1,2}$-metric. In the following lemma we will extend his construction to all homogeneous $\dot W^{1,q}$-norms with $q\geq 1$:
\begin{lemma}\label{lem:Wsp_sphere}
Let
\begin{equation}
\Phi: \operatorname{Diff}(S^1)/S^1 \to C^{\infty}(S^1,\mathbb R), \qquad \varphi \mapsto q (\varphi')^{1/q}. 
\end{equation}
We have:
\begin{enumerate}
\item The mapping $\Phi$  is an isometric embedding, where $\Diff(S^1)/S^1$ is equipped with the right-invariant homogeneous $\dot W^{1,q}$-norm and 
$C^{\infty}(S^1,\mathbb R)$ with the standard (i.e., non-invariant) $L^q$-norm. 
\item The image $\Phi(\operatorname{Diff}(S^1))\subset C^{\infty}(S^1,\mathbb R)$ is 
an open subset of the $L^q$-sphere of radius $q$ 
given by
\[
\mathcal{U}_q := \BRK{ f\in C^{\infty}(S^1;\R) \,:\, f>0, \|f\|_{L^q} =q}. 
\]
\item For fixed $q\geq 1$ the diameter of the set $\mathcal{U}_q$ is bounded from above and below by 
\beq q < \diam \mathcal{U}_q \leq 8q\;.\eeq
As a consequence $\mathcal{U}_q$ is unbounded for $q\to \infty$.
\end{enumerate}
\end{lemma}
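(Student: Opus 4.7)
The plan for part (1) is to compute the velocity of the image of a smooth path $\varphi_t$ in $\Diff(S^1)/S^1$. If $u_t = \pl_t \varphi_t \circ \varphi_t^{-1}$ is the right-logarithmic derivative, the chain rule gives $\pl_t \varphi_t' = (u_t' \circ \varphi_t)\varphi_t'$, so $\pl_t(q(\varphi_t')^{1/q}) = (\varphi_t')^{1/q}(u_t' \circ \varphi_t)$. Taking the $L^q$-norm and applying the change of variables $y = \varphi_t(x)$ then shows $\|\pl_t \Phi(\varphi_t)\|_{L^q} = \|u_t\|_{\dot W^{1,q}}$, so $\Phi$ matches the two length structures; injectivity follows from the normalization $\varphi(0)=0$. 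For part (2), $\Phi(\varphi)>0$ by orientation preservation and $\|\Phi(\varphi)\|_{L^q}^q = q^q \int_{S^1} \varphi' \, dx = q^q$; conversely, any $f\in\mathcal{U}_q$ is realized via $\varphi(x) = \int_0^x (f(s)/q)^q \, ds$, and openness of $\Phi(\Diff(S^1)/S^1)$ in the smooth $L^q$-sphere reflects the openness of the positivity condition.

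For the lower bound in part (3), I would compare the constant function $f_1 \equiv q$ with a smooth positive approximation $f_2$ of the normalized spike $q\epsilon^{-1/q}\ind_{[0,\epsilon]}$: a direct computation gives $\|f_1 - f_2\|_{L^q} \to 2^{1/q} q$ as $\epsilon\to 0$, and $2^{1/q}q > q$ for every $q<\infty$. Since the ambient $L^q$-distance bounds from below the intrinsic path-length distance in $\mathcal{U}_q$ (by Minkowski applied to any connecting path), this yields $\diam \mathcal{U}_q > q$.

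For the upper bound, given arbitrary $f_1,f_2\in \mathcal{U}_q$, I would use the renormalized straight line $\tilde f_t = q f_t / \|f_t\|_{L^q}$ with $f_t = (1-t)f_1 + tf_2$; this stays in $\mathcal{U}_q$ because $f_t > 0$ pointwise. Differentiating and using $|\pl_t \|f_t\|_{L^q}| \le \|\pl_t f_t\|_{L^q}$ gives $\|\pl_t \tilde f_t\|_{L^q} \le 2q\|f_2 - f_1\|_{L^q}/\|f_t\|_{L^q}$. The pointwise bound $f_t \ge \max((1-t)f_1,\, tf_2)$ implies $\|f_t\|_{L^q}\ge q/2$, and combined with $\|f_2 - f_1\|_{L^q}\le 2q$ this gives $\|\pl_t \tilde f_t\|_{L^q} \le 8q$, hence total length at most $8q$.

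The main obstacle is the upper bound: one needs a path that both stays inside $\mathcal{U}_q$ (preserving positivity and the sphere constraint) and admits a length estimate independent of $f_1,f_2$. The rescaling trick handles the first requirement, while the pointwise comparison $f_t \ge \max((1-t)f_1,\, tf_2)$ — which crucially uses positivity of both endpoints — is what furnishes the uniform lower bound on $\|f_t\|_{L^q}$ needed to keep the length uniformly bounded.
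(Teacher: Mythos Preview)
Your proposal is correct and follows essentially the same approach as the paper: the same differential computation for part~(1), the same spike-versus-constant comparison for the lower bound in part~(3), and the same renormalized straight-line path for the upper bound. The only cosmetic difference is that you obtain $\|f_t\|_{L^q}\ge q/2$ via the pointwise bound $f_t\ge\max((1-t)f_1,\,tf_2)$, whereas the paper uses $(a+b)^q\ge a^q+b^q$ to get $\|f_t\|_{L^q}^q\ge((1-t)^q+t^q)q^q$; both yield the same lower bound $q/2$ and hence the same constant $8q$.
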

\begin{proof}
The flat $L^q$ metric on $C^{\infty}(S^1,\mathbb R_{>0})$ is given by  
\begin{equation}
| \delta f |_f = \left(\int_{0}^{{1}} |\delta f (\theta)|^q \; d\theta\right)^{1/q}, \quad \delta f \in T_f C^{\infty}(S^1,\mathbb R_{>0}) \cong C^\infty(S^1).
\end{equation}
To see that the mapping 
$\Phi$  is an isometric embedding (where $\operatorname{Diff}(S^1)/S^1$ is equipped with the right-invariant 
$\dot W^{1,q}$-metric) we need to calculate the derivative of $\Phi$. 
We have:
\begin{equation}
d \Phi(\varphi).h = (\varphi')^{1/q-1} h'
\end{equation}
and thus
\begin{equation}
|d \Phi(\varphi).h| = \left(\int_{0}^{{1}}  (\varphi'(\theta))^{1-q} |h'(\theta)|^q d \theta\right)^{1/q}
\end{equation}
which equals exactly the right-invariant, homogeneous $\dot W^{1,q}$-metric. 
The characterization of the image of $\Phi$ follows directly from the definition of $\Diff(S^1)/S^1$.

To calculate the lower bound for the diameter of $\mathcal{U}_q$ we consider the functions $f=q$ and {$g=cq(n\ind_{(0, n^{-q})} + \e\ind_{( n^{-q},1)})$} 
for $\e\ll 1$ and $c\approx 1$ such that $\|g\|_{L^q} = {q}$. Then
\[
\diam \mathcal{U}_q \ge \dist_{\mathcal{U}_q}(f,g) \ge \|f-g\|_{L^q} \approx {2^{1/q}q}, 
\]
where $\diam \mathcal{U}_q$ and $\dist_{\mathcal{U}_q}$ refer to the intrinsic distance in $\mathcal{U}_q$. 

It remains to show that $\diam \mathcal{U}_q$ is bounded from above for each fixed $q$. 
Towards this aim we will construct paths that connect given elements $f,g\in  \mathcal{U}_q$ and are bounded independently of $f$ and $g$. 
Let 
\begin{equation}
f_t(\theta) = \frac{ {q} }{\| \tilde f_t(\cdot) \|_{L^q}} \tilde f_t(\theta)
\quad\text{ with }\quad
\tilde f_t(\theta) = (1-t)f(\theta)+tg(\theta)\;.
\end{equation}
It is easy to see that $f_t(\cdot) \in  \mathcal{U}_q$ for any $t\in[0,1]$. It remains 
to bound the $L^q$-length of $f_t(\theta)$. We have
\begin{equation}
\partial_t f_t(\theta) =  \frac{ q }{\| \tilde f_t(\cdot) \|_{L^q}} \partial_t \tilde f_t(\theta)- \frac{ q}{\| \tilde f_t(\cdot) \|^2_{L^q}} \partial_t \| \tilde f_t(\cdot) \|_{L^q}\, \tilde f_t(\theta) ,
\end{equation}
and thus
\begin{equation}
\dist_{\mathcal{U}_q}(f,g)\leq \int_0^1 \| \partial_t f_t(\cdot)\|_{L^q} dt\leq 
q \left(\int_0^1   \frac{ \| \partial_t \tilde f_t(\cdot) \|_{L^q} }{\| \tilde f_t(\cdot) \|_{L^q}}  dt
+ \int_0^1 \frac{\left|\partial_t \| \tilde f_t(\cdot) \|_{L^q}\right|}{\| \tilde f_t(\cdot) \|_{L^q}}  dt\right)
\end{equation}
We will estimate the two integrals separately. For the first one we calculate
\begin{equation}
 \| \partial_t \tilde f_t(\cdot) \|_{L^q} = \| g-f \|_{L^q}\leq \|  g \|_{L^q}+\|  f \|_{L^q}\leq  2 q
\end{equation}
and 
\beq
\begin{split}
 \|  \tilde f_t(\cdot) \|^q_{L^q} &= \int \left( (1-t)f(\theta)+tg(\theta)\right)^q d\theta \\&\geq  \int  (1-t)^q f(\theta)^q d\theta + \int t^q g(\theta)^q  d\theta =
 (1-t)^q \|  f \|^q_{L^q}+ t^q \|  g \|^q_{L^q} {\ge \frac{2}{2^q} q^q}\;.
\end{split}
\eeq
Thus the first term can be estimated by {$4$}. In these estimates we made repeatedly use of the fact that all involved functions are positive.
For the second term we calculate using the H\"older inequality
\beq\begin{split}
\partial_t \| \tilde f_t(\cdot) \|_{L^q}
	&= \partial_t \left( \int \tilde f_t(\theta)^q d\theta\right)^{1/q} 
		= \left( \int \tilde f_t(\theta)^q d\theta\right)^{1/q-1} \int \tilde f_t(\theta)^{q-1} (g-f)\, d\theta\\
	&\leq \left( \int \tilde f_t(\theta)^q d\theta\right)^{1/q-1} \left( \int \tilde f_t(\theta)^{q}d\theta\right)^{(q-1)/q}  \left(\int (g-f)^q\, d\theta\right)^{1/q}\\
	&=\left(\int (g-f)^q\, d\theta\right)^{1/q} 
		=  \| g- f\|_{L^q} \leq {2q}\,, 
\end{split}\eeq
and thus the second term is bounded as well {by $4$}. This, in turn, proves the desired bound for the diameter of $\mathcal{U}_p$.
\end{proof}

Note that the upper bound above implies directly the boundedness of the diameter of $\Diff(S^1)/S^1$ with respect to the homogeneous $\dot W^{1,q}$ metric. 
We now use the lower bound, together with the Sobolev embedding theorem of Lemma~\ref{lem:critical_embedding}
to show the unboundedness of $\diam_{s,p} \Diff(S^1)$ in the critical case:
\begin{lemma}
Let $p\in(1,\infty)$. Then the diameter $\diam_{1+1/p,p} \Diff(S^1)$ of $\Diff(S^1)$ with respect to the right-invariant $W^{1+1/p,p}$-norm is unbounded. 
\end{lemma}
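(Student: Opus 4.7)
The plan is to combine the lower bound on the $\dot W^{1,q}$-diameter from Lemma~\ref{lem:Wsp_sphere} with the Sobolev embedding $W^{1+1/p,p}(S^1)\hookrightarrow W^{1,q}(S^1)$, and to track how the embedding constant grows in $q$. Since Lemma~\ref{lem:critical_embedding} tells us the critical embedding $W^{1/p,p}\hookrightarrow L^q$ has operator norm growing like $q^{1-1/p}$, whereas the diameter lower bound grows linearly in $q$, letting $q\to\infty$ forces the $W^{1+1/p,p}$-diameter to blow up.

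More precisely, first I would apply Lemma~\ref{lem:critical_embedding} with $n=1$ and $s=1/p$, transferred from $\R$ to $S^1$ via the partition-of-unity construction in Section~\ref{sec:Sobolev_manifolds}, to the derivative $u'$ of any $u\in W^{1+1/p,p}(S^1)$. This yields a constant $C=C(p)$ such that for all $q\in[p,\infty)$,
\[
\|u\|_{\dot W^{1,q}(S^1)} \;=\; \|u'\|_{L^q(S^1)} \;\le\; C q^{1-1/p}\|u'\|_{W^{1/p,p}(S^1)} \;\le\; C q^{1-1/p}\|u\|_{W^{1+1/p,p}(S^1)}.
\]
Applied to the velocity field $u_t=\partial_t\varphi_t\circ\varphi_t^{-1}$ of a path and integrated in $t$, this bounds the $\dot W^{1,q}$-length of any path by $Cq^{1-1/p}$ times its $W^{1+1/p,p}$-length; taking infima over paths and then passing to the quotient gives
\[
\dist_{\dot W^{1,q}}^{\,\Diff(S^1)/S^1}(\mathrm{id},\varphi) \;\le\; Cq^{1-1/p}\,\dist_{W^{1+1/p,p}}^{\,\Diff(S^1)}(\mathrm{id},\varphi)
\]
for every $\varphi\in\Diff(S^1)$ fixing $0$.

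Second, by the lower-bound part of Lemma~\ref{lem:Wsp_sphere}, for each $q\ge p$ we can choose $\varphi_q\in\Diff(S^1)/S^1$ with $\dist_{\dot W^{1,q}}(\mathrm{id},\varphi_q)\ge q/2$. Combining with the previous display,
\[
\dist_{W^{1+1/p,p}}(\mathrm{id},\varphi_q) \;\ge\; \frac{q}{2C\,q^{1-1/p}} \;=\; \frac{q^{1/p}}{2C} \;\xrightarrow[q\to\infty]{}\; \infty,
\]
so $\diam_{1+1/p,p}\Diff(S^1)=\infty$. The main technical point to justify carefully is that Lemma~\ref{lem:critical_embedding}, stated on $\R^n$, transfers to $S^1$ with the same $q^{1-1/p}$ rate (so that the linear growth of the $\dot W^{1,q}$-diameter genuinely dominates the embedding constant); everything else is routine once the length-comparison inequality is in place.
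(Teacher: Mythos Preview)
Your proposal is correct and follows essentially the same approach as the paper: combine the lower bound $\diam_{\dot W^{1,q}}\Diff(S^1)/S^1>q$ from Lemma~\ref{lem:Wsp_sphere} with the critical embedding estimate of Lemma~\ref{lem:critical_embedding} (applied to $u'$) to obtain $\dist_{1+1/p,p}(\id,\vp_q)\gtrsim q^{1/p}\to\infty$. The only differences are cosmetic---you work with the homogeneous $\dot W^{1,q}$ seminorm and are more explicit about the $\R\to S^1$ transfer, whereas the paper uses the full $W^{1,q}$ norm and leaves that transfer implicit.
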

\begin{proof}
In Lemma~\ref{lem:Wsp_sphere} we have shown that $\diam_{1,q} \Diff(S^1)/S^1 > q$, and thus we also have $\diam_{1,q} \Diff(S^1)> q$.
In particular we have shown that for any $q\geq 1$ there is some $\vp^q\in \Diff(S^1)$ such that $\dist_{1,q}(\id,\vp^q)> {q}$.
Therefore, using Lemma~\ref{lem:critical_embedding},
we have 
\beq\begin{split}
{q} < \dist_{1,q}(\id,\vp^q) 
	&= \operatorname{inf} \int_0^1 \|\varphi^q_t\circ(\varphi_t^q)^{-1}\|_{W^{1,q}}\; dt\\
	&< \operatorname{inf} Cq^{1-\frac{1}{p}}\int_0^1  \|\varphi^q_t\circ(\varphi_t^q)^{-1}\|_{W^{1+{1/p},p}}\; dt\\
	&=Cq^{1-\frac{1}{p}}\dist_{1+{1/p},p}(\id,\vp^q).
\end{split}
\eeq
Thus $\dist_{1+{1/p},p}(\id,\vp^q) \ge Cq^{1/p}$
and taking $q\to \infty$ completes the proof. 
\end{proof}

\begin{remark}
It might be possible to use a similar argument to prove the second part of Question~\ref{qn:continuity_diameter}, i.e., that $\lim_{s\to (1 + 1/p)-} \diam_{s,p} \Diff(S^1) = \infty$, by controlling the embedding constants of $W^{s,p}$ into $W^{1,q}$ for $s\nearrow 1+1/p$ and an appropriate $q(s)\to \infty$.
\end{remark}

\subsection{The subcritical case $s<1+1/p$.}\label{sec:diams1_subcritical}
It remains to show the boundedness of the diameter for $s<1+1/p$. Towards this aim we will first show that if (controlled) arbitrary change in length (volume) has a bounded cost, then the diameter is finite:
\begin{lemma}
\label{lem:contractions_S_1}
Identify $S^1$ with the unit interval, and let $(s-1)p<1$.
For any $\lambda\in \mathbb{N}$ and $\delta\in (0,1)$, denote by $\psi_{\lambda,\delta}\in \Diff(S^1)$ a map satisfying $\psi_{\lambda,\delta}(x) = \lambda x$ for $x\in \Brk{0, \frac{1-\delta}{\lambda}}$.
If there exists $C=C(s,p)>0$, independent of $\lambda$ and $\delta$, such that 
\[
\dist_{W^{s,p}([0,1])}(\id, \psi_{\lambda,\delta}) <C \quad \text{for every $\lambda \in \mathbb{N}$ and $\delta\in (0,1)$},
\]
then
\[
\diam_{s,p}\Diff(S^1) \le \diam_{s,p}\Diff([0,1]) + 1 < 4C+1.
\]
\end{lemma}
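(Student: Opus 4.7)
The plan is to split the bound into two independent pieces. \emph{First}, I reduce from $\Diff(S^1)$ to $\Diff([0,1])$ at a cost of at most $1$ by a rotation: given $\vp\in\Diff(S^1)$, set $a=\vp(0)\in[0,1)$ and connect $\vp$ to $R_{-a}\circ\vp\in\Diff([0,1])$ via the path $t\mapsto R_{-ta}\circ\vp$, whose right-translated generator is the constant vector field $-a$ with $W^{s,p}(S^1)$-norm equal to $a\le 1$. The triangle inequality gives $\dist(\id,\vp)\le 1+\diam_{s,p}\Diff([0,1])$, so it remains to show $\diam_{s,p}\Diff([0,1])<4C$.

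\emph{Second}, for $\vp\in\Diff([0,1])$ I will use $\psi=\psi_{\lambda,\delta}$ to write $\vp=\psi\rho\psi^{-1}$ with $\rho$ supported in a very small interval. By right-invariance, $\dist(\id,\vp)=\dist(\id,\vp^{-1})$, so I may assume $\vp'(0)\le 1$ and then pick $\delta\in(0,1)$ close to $1$ so that $\vp(1-\delta)\le 1-\delta$, i.e.\ $\vp$ preserves $[0,1-\delta]$. With $\rho:=\psi^{-1}\vp\psi$, this preservation forces $\rho=\id$ outside $[0,(1-\delta)/\lambda]$. The triangle inequality along $\id\to\psi^{-1}\to\rho\psi^{-1}\to\psi\rho\psi^{-1}=\vp$, combined with the right-invariant identities $\dist(\psi^{-1},\rho\psi^{-1})=\dist(\id,\rho)$ and $\dist(\rho\psi^{-1},\psi\rho\psi^{-1})=\dist(\rho,\psi\rho)=\dist(\id,\psi)$, yields
\[
\dist(\id,\vp)\ \le\ 2\dist(\id,\psi)+\dist(\id,\rho)\ \le\ 2C+\dist(\id,\rho).
\]

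To close the loop I exploit Lemma~\ref{lem:scaling}: any path $\vp_t$ from $\id$ to $\vp$ whose support stays in $[0,1-\delta]$ conjugates to a path $\rho_t=\psi^{-1}\vp_t\psi$ from $\id$ to $\rho$ whose generating vector field is exactly the rescaling $v_t(y)=\lambda^{-1}u_t(\lambda y)=u_t^\lambda(y)$ in the lemma's notation. Both scaling exponents produced by the lemma (for $\dot W^{s,p}$ and for $L^p$) are negative in the subcritical range $(s-1)p<1$, with the worst being $-\alpha$ where $\alpha:=1+1/p-s>0$, so $\|v_t\|_{W^{s,p}}\le\lambda^{-\alpha}\|u_t\|_{W^{s,p}}$ and consequently $\dist(\id,\rho)\le\lambda^{-\alpha}\dist(\id,\vp)$. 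Since $\dist(\id,\vp)<\infty$ a priori (any smooth path has finite length), the self-referential inequality $\dist(\id,\vp)\le 2C+\lambda^{-\alpha}\dist(\id,\vp)$ rearranges to $\dist(\id,\vp)\le 2C/(1-\lambda^{-\alpha})\le 4C$ as soon as $\lambda$ is taken large enough that $\lambda^{-\alpha}\le\tfrac12$. The main technical point I expect to be delicate is the scaling step: one has to justify restricting the infimum defining $\dist(\id,\vp)$ to paths whose support remains in $[0,1-\delta]$ (so that the conjugation formula produces a clean rescaling of the generating vector field), and check that choosing $\delta$ adaptively to $\vp$ remains compatible with the hypothesis being uniform in both $\lambda$ and $\delta$.
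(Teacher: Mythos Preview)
Your overall strategy---rotation to reduce to $\Diff([0,1])$, then conjugation by $\psi_{\lambda,\delta}$ combined with the scaling of Lemma~\ref{lem:scaling}---is exactly the paper's approach, but there is a genuine gap in your reduction step.

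You assume (after possibly passing to $\vp^{-1}$) that $\vp$ \emph{preserves} $[0,1-\delta]$ for some small $1-\delta$, and then assert that $\rho=\psi^{-1}\vp\psi$ equals the identity outside $[0,(1-\delta)/\lambda]$. This is false: preservation of $[0,1-\delta]$ by $\vp$ only implies that $\rho$ preserves $[0,(1-\delta)/\lambda]$, not that $\rho$ is the identity on its complement. On $((1-\delta)/\lambda,1]$ the map $\rho$ is the conjugate of $\vp|_{[1-\delta,1]}$ by $\psi$, which has no reason to be trivial, and since $\psi$ is far from $x\mapsto\lambda x$ on that interval, the clean vector-field formula $v_t(y)=\lambda^{-1}u_t(\lambda y)$ does not hold there. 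Consequently the bound $\dist(\id,\rho)\le\lambda^{-\alpha}\dist(\id,\vp)$ is unjustified. Your flagged ``delicate point'' about restricting to paths supported in $[0,1-\delta]$ is really a symptom of this: no such path can reach $\vp$ unless $\vp$ is already the identity on $[1-\delta,1]$. (A secondary issue: when $\vp'(0)=1$, neither $\vp$ nor $\vp^{-1}$ need satisfy $\vp(x)\le x$ near $0$, so the inverse trick does not always produce an invariant small interval.)

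The paper closes this gap by first writing $\vp=\vp_1\circ\vp_2$ with $\supp\vp_1\subset[0,1-\delta]$ and $\supp\vp_2\subset[\delta,1]$ for some $\delta>0$, which is always possible for $\vp\in\Diff([0,1])$. Then $\vp_1$ genuinely equals the identity on $[1-\delta,1]$, so $\psi^{-1}\vp_1\psi$ coincides with the global rescaling $x\mapsto\lambda^{-1}\vp_1(\lambda x)$, the map on paths is a bijection, and Lemma~\ref{lem:scaling} gives $\dist(\id,\vp_1^\lambda)\le\lambda^{(s-1)-1/p}\dist(\id,\vp_1)$. Letting $\lambda\to\infty$ yields $\dist(\id,\vp_1)\le 2C$; by symmetry $\dist(\id,\vp_2)\le 2C$, and the bound $4C+1$ follows.
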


\begin{proof}
Let $\vp\in \Diff(S^1)$; by translating, we can assume that $\vp(0)=0$.
{This translation} costs at most 1, hence the "$+1$" in the statement of the theorem.
We can always write $\vp = \vp_1 \circ \vp_2$, where $\supp \vp_1\subset [0,1-\delta]$, and $\supp \vp_2 \subset [\delta, 1]$ for some $\delta>0$.
Since $\dist(\id, \vp) \le \dist(\id, \vp_1) + \dist(\id, \vp_2)$, it is enough to prove that both $\dist(\id,\vp_1)$ and $\dist(\id, \vp_2)$ are smaller than  $2C$.
{Note that for} $\vp\in \Diff(S^1)$ with $\vp(0){=0}$ we have
\[
\dist_{W^{s,p}(S^1)}(\id,\vp) \le \dist_{W^{s,p}([0,1])}(\id,\vp),
\]
{hence} it is enough to prove the statement for $W^{s,p}([0,1])$.
Henceforth in this proof, we will only refer to $[0,1]$.

{Following the decomposition above}, from now on we will assume that $\supp \vp \subset [0,1-\delta]$ for some $\delta>0$, and consider $\vp$ as a diffeomorphisms of $\R$.
Denote 
\[
\vp^\lambda(x) = \frac{1}{\lambda}\vp(\lambda x) = \psi_{\lambda,\delta}^{-1} \circ \vp \circ \psi_{\lambda,\delta}(x),
\]
where the last equality holds because $\supp \vp \subset [0,1-\delta]$.
Using our assumption, we have
\[
\dist_{s,p}(\id ,\vp) = \dist_{s,p}(\id, \psi_{\lambda,\delta} \circ \vp^\lambda \circ \psi_{\lambda,\delta}^{-1}) < 2C + \dist_{s,p}(\id, \vp^\lambda).
\]
{We now show that $\dist_{s,p}(\id, \vp^\lambda)$ can be controlled by $\dist_{s,p}(\id ,\vp)$ times a small constant.} 
A direct calculation shows that the map $\vp(t,x)\mapsto \vp^\lambda(t,x)=\lambda^{-1}\vp(t,\lambda x)$ is a bijection between paths supported on $[0,1]$ to paths supported on $[0,1/\lambda]$, with the corresponding vector fields
\[
u_t^\lambda(x) = \frac{1}{\lambda}u_t(\lambda x).
\]
Note that
\[
\|u_t^\lambda\|_{\dot W^{s,p}} = \lambda^{(s-1)-\frac{1}{p}} \|u_t\|_{\dot W^{s,p}},\qquad 
\|u_t^\lambda\|_{\dot W^{1,p}} = \lambda^{-\frac{1}{p}} \|u_t\|_{\dot W^{1,p}},\qquad 
\|u_t^\lambda\|_{L^p} = \lambda^{-1-\frac{1}{p}} \|u_t\|_{L^p}
\]
where $\dot W^{s,p}$ refers to the $(s-1)$-Gagliardo seminorm on the derivative (if $s>1$), c.f., Lemma~\ref{lem:scaling}.
We therefore have
\[
\|u_t^\lambda\|_{W^{s,p}} \le \lambda^{(s-1)-\frac{1}{p}} \|u_t\|_{W^{s,p}},
\]
and hence
\[
\length_{W^{s,p}}(\vp^\lambda_t) \le \lambda^{(s-1)-\frac{1}{p}} \length_{W^{s,p}}(\vp_t).
\]
Therefore, {taking the infimum over all possible paths between $\id$ and $\vp$,}
we have
\[
\dist_{s,p}(\id, \vp^\lambda) \le \lambda^{(s-1)-\frac{1}{p}} \dist_{s,p}(\id,\vp).
\] 
We conclude that
\[
\dist_{s,p}(\id,\vp) < \frac{2C}{1-\lambda^{(s-1)-\frac{1}{p}}}.
\]
Since $(s-1)p<1$, taking $\lambda\to \infty$ concludes the proof.
\end{proof}

The following lemma shows that $\dist_{W^{s,p}([0,1])}(\id, \psi_{\lambda,\delta})$ is indeed uniformly bounded, by showing that the affine homotopy is uniformly bounded (in $\lambda$ and $\delta$) in the subcritical regime.
As the proof is a rather technical calculation, we postpone it to Appendix~\ref{app:technical_lemma}.

\begin{lemma}\label{lem:psikS1}
Let $s<1+1/p$. The there exists a sequence of maps $\psi_{\lambda,\delta}$ with $\psi_{\lambda,\delta}(x) = \lambda x$ for $x\in \Brk{0, \frac{1-\delta}{\lambda}}$ 
such that 
\[
\dist_{W^{s,p}([0,1])}(\id, \psi_{\lambda,\delta}) <C \quad \text{for every $\lambda\in \mathbb{N}$ and $\delta\in (0,1)$}
\]
where $C=C(s,p)$ is independent of $\lambda$. 
\end{lemma}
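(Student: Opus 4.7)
The plan is to take $\psi_{\lambda,\delta}$ to be (a smoothing of) the piecewise linear map with slope $\lambda$ on $[0,(1-\delta)/\lambda]$ and constant slope $\mu := \delta\lambda/(\lambda-1+\delta)$ on $[(1-\delta)/\lambda,1]$, and to bound the $W^{s,p}$-length of the straight-line (affine) homotopy $\vp_t(x) = (1-t)x + t\,\psi_{\lambda,\delta}(x)$. Since $\mu < 1$ whenever $\delta < 1$ and $\lambda$ is large, $\vp_t$ is an orientation-preserving diffeomorphism of $[0,1]$ for every $t\in [0,1]$. The Eulerian velocity $u_t := (\pl_t\vp_t)\circ \vp_t^{-1}$ is a continuous, piecewise-linear ``tent'' function vanishing at the endpoints, with a single kink at $c_t := \vp_t((1-\delta)/\lambda) = (1-\delta)(1+t(\lambda-1))/\lambda$, and with left and right slopes $\alpha_t := (\lambda-1)/(1+t(\lambda-1))$ and $\beta_t := (\mu-1)/(1+t(\mu-1))$.

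The $L^p$ norm of $u_t$ is trivially bounded since $|u_t|\le 1$, so the task reduces to controlling the Gagliardo seminorm. For $s = 1 + \sigma$ with $\sigma \in [0, 1/p)$, the derivative $u_t'$ is a step function with a single jump of size $|\alpha_t - \beta_t|$ at $c_t$; a direct computation of the double integral $\iint |u_t'(x)-u_t'(y)|^p / |x-y|^{1+\sigma p}\, dx\, dy$ gives
\[
\|u_t'\|_{\dot W^{\sigma,p}} \;\le\; C(\sigma,p)\,|\alpha_t - \beta_t|\, c_t^{(1-\sigma p)/p},
\]
where both the finiteness of the inner integral and the exponent of $c_t$ rely crucially on $\sigma p < 1$. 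For $s < 1$, a similar decomposition of $[0,1]^2$ into ``same-piece'' and ``cross'' regions, combined with the Lipschitz bound $|u_t(x)-u_t(y)|\le \alpha_t|x-y|$ and the uniform bound $|u_t|\le 1$, yields an estimate of the same type, namely $\|u_t\|_{\dot W^{s,p}} \le C(s,p)\brk{\alpha_t\, c_t^{1+1/p - s} + 1}$.

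Substituting the elementary two-sided estimates $\alpha_t \asymp (t+1/\lambda)^{-1}$ and $c_t \asymp (1-\delta)(t + 1/\lambda)$, which hold uniformly on $[0,1]$, the integrand simplifies to $\|u_t\|_{W^{s,p}} \le C(s,p)\brk{(t + 1/\lambda)^{1/p - s} + 1}$. Integrating in $t$ gives
\[
\int_0^1 \|u_t\|_{W^{s,p}}\, dt \;\le\; C(s,p)\brk{\int_0^1 (t + 1/\lambda)^{1/p - s}\,dt + 1},
\]
and the remaining integral is uniformly bounded in $\lambda$ (and in $\delta$ bounded away from $1$) precisely when $1/p - s > -1$, i.e., when $s < 1 + 1/p$. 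This supplies the required uniform bound $\dist_{W^{s,p}([0,1])}(\id, \psi_{\lambda,\delta}) \le C(s,p)$.

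The main obstacle is the small-$t$ regime $t \lesssim 1/\lambda$, in which $u_t$ degenerates into a very tall, very thin bump whose naive $W^{s,p}$ norm is of size $\lambda^{s-1/p}$; the compensating concentration factor $c_t^{1+1/p-s}$ is exactly what holds the time integral together, and the subcritical hypothesis $s<1+1/p$ is the threshold at which this cancellation remains integrable (at $s=1+1/p$ it produces a logarithmic divergence in $\lambda$, consistent with the failure proved in Section~\ref{sec:diams1_critical}). A minor technicality is that the piecewise linear $\psi_{\lambda,\delta}$ is not smooth at its kink; one smooths this kink in an arbitrarily small neighborhood, which perturbs the norm estimates only by a multiplicative constant, or equivalently approximates in $W^{s,p}$ by genuine smooth diffeomorphisms.
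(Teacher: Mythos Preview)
Your approach---the piecewise-linear map, the affine homotopy, and the reduction to bounding the Gagliardo seminorm of the step function $u_t'$---is exactly the route taken in the paper. The problem is that your execution misses one of the two singular regimes, and as written the bound is not uniform in $\delta$.

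Concretely, your estimate $\|u_t'\|_{\dot W^{\sigma,p}} \le C|\alpha_t-\beta_t|\,c_t^{(1-\sigma p)/p}$ together with the substitution $\alpha_t\asymp (t+1/\lambda)^{-1}$ only controls the blow-up coming from the jump at $c_t$ when $t$ is small. But for small $\delta$ the right slope also degenerates: $\mu = \delta\lambda/(\lambda-1+\delta)\approx \delta$, so $|\beta_t| = (1-\mu)/\bigl((1-t)(1-\mu)+\mu\bigr)\asymp \bigl((1-t)+\delta\bigr)^{-1}$, which blows up as $t\to 1$ and $\delta\to 0$. Plugging this into your displayed bound (with $c_t\approx 1-\delta$ there) gives a factor $\sim ((1-t)+\delta)^{-1}$, which is not integrable in $t$. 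Your final line $\|u_t\|_{W^{s,p}}\le C\bigl((t+1/\lambda)^{1/p-s}+1\bigr)$ therefore cannot hold uniformly in $\delta$; the ``$+1$'' should instead be a term of the form $\bigl((1-t)+\delta\bigr)^{1/p-s}$, which \emph{is} integrable precisely when $s<1+1/p$. The paper's computation tracks this second regime explicitly (the four-region split produces both a $t^{1/p-s}$ and a $(1-t)^{1/p-s}$ contribution). The fix on your side is straightforward: either replace $c_t^{(1-\sigma p)/p}$ by $\min(c_t,1-c_t)^{(1-\sigma p)/p}$ in the single-jump estimate, or separately bound the $\beta_t$ contribution using $1-c_t\asymp (1-t)+\delta$; either way you recover the symmetric bound and the required uniformity in both $\lambda$ and $\delta$.
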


\subsection{The displacement energy}\label{sec:discplacementenergy_diffs1}
Finally, we discuss boundedness properties of the displacement energy as introduced in Definition~\ref{def:displacement_energy}. 
Even though we do not have sufficiently strong result relating the boundedness of the diameter and the displacement energy (see Appendix~\ref{sec:displacement_energy}),
we will now show that indeed in our case boundedness (unboundedness resp.) of the displacement energy of arbitrarily large open subsets of $S^1$ is closely related to the boundedness of the diameter.
While it is obvious that bounded diameter implies bounded displacement energy, we give below a direct, simpler proof for the boundedness of the displacement energy in the subcritical case.

\begin{proposition}\label{bounded_displacement_energy}
Identify $S^1$ with the interval $[0,1]$.
We then have the following bounds
\begin{enumerate}
\item For an interval $I$ with length smaller then $1/2$, we have
	\[
	E_{s,p}(I) \le 1/2 \quad \text{for all $s,p$.}
	\]
\item For every $s< 1+1/p$, there exists $C=C(s,p)>0$ such that
	\[
	E_{s,p}(I) < C \quad \text{for every open interval $I\subset[0,1]$}.
	\]
\item If $s > 1+1/p$ then there exists $c=c(s,p)>0$ such that
	\[
	E_{s,p}((0,1-\delta)) > c|\log \delta|.
	\]
\end{enumerate}
\end{proposition}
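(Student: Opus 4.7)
The plan is to treat the three parts in turn, using rigid rotations of $S^1$ to move (small) intervals cheaply, and the maps $\psi_{\lambda,\delta}$ from Lemma~\ref{lem:psikS1} to compress (large) intervals cheaply in the subcritical regime. Since the $W^{s,p}$-norm on vector fields on $S^1$ is translation invariant (being defined through translation-invariant Gagliardo seminorms), conjugation of a path by a rotation preserves its length, so $\dist_{s,p}$ is invariant under such conjugation. In particular $E_{s,p}$ is rotation invariant on intervals, and in every part I may assume the interval takes the form $I=(0,1-\delta)$ with $\delta\in(0,1)$.

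\textbf{Part (1).} The rigid rotation $R_\theta$ is the flow at time $\theta$ of the constant unit vector field $v\equiv 1$. Since the Gagliardo seminorm vanishes on constants, $\|v\|_{W^{s,p}(S^1)}=\|1\|_{L^p(S^1)}=1$, so $\dist_{s,p}(\id,R_\theta)\le |\theta|$. If $|I|<1/2$ then $R_{1/2}(I)\cap I=\emptyset$, yielding $E_{s,p}(I)\le 1/2$.

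\textbf{Part (2).} One may assume $\delta<1/2$ (otherwise use part (1)). Let $\psi_{\lambda,\delta}$ be the map of Lemma~\ref{lem:psikS1}; its inverse compresses $[0,1-\delta]$ linearly onto $[0,(1-\delta)/\lambda]$. Fix any $\lambda\ge 2(1-\delta)/\delta$ and set
\[
\vp := R_{1-\delta/2}\circ \psi_{\lambda,\delta}^{-1}.
\]
A direct check shows $\vp(I)\subset (1-\delta/2,\,1)\subset I^c$, so $\vp$ displaces $I$. By the triangle inequality, right-invariance (which gives $\dist_{s,p}(\id,\psi^{-1})=\dist_{s,p}(\id,\psi)$), and Lemma~\ref{lem:psikS1},
\[
\dist_{s,p}(\id,\vp)\le \dist_{s,p}(\id,R_{1-\delta/2})+\dist_{s,p}(\id,\psi_{\lambda,\delta}^{-1})\le \tfrac{\delta}{2}+C(s,p),
\]
uniformly in $I$.

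\textbf{Part (3).} Assume $s>1+1/p$ and let $\vp$ displace $I=(0,1-\delta)$. By the supercritical estimate of Section~\ref{sec:diams1_supercritical}, $\log\vp'(x)\le C\dist_{s,p}(\id,\vp)$ for every $x\in S^1$. Since $\vp(I)\subset I^c$ and $\vp$ is orientation preserving, $\int_{I^c}\vp'(x)\,dx\ge 1-\delta$, and because $|I^c|=\delta$, the mean-value theorem produces $x_*\in I^c$ with $\vp'(x_*)\ge(1-\delta)/\delta$. Hence for $\delta$ sufficiently small,
\[
\dist_{s,p}(\id,\vp)\ge \tfrac{1}{C}\log\tfrac{1-\delta}{\delta}\ge c\,|\log\delta|;
\]
taking the infimum over all displacing $\vp$ gives $E_{s,p}((0,1-\delta))\ge c|\log\delta|$.

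The crux, and the most delicate step, is part (2): the uniform-in-$(\lambda,\delta)$ bound of Lemma~\ref{lem:psikS1} is exactly what allows the composition of a cheap rotation with a cheap compression to displace an arbitrarily large interval at uniformly bounded cost. Parts (1) and (3) are then essentially immediate, relying respectively on the observation that a small interval can be rotated off itself, and that displacing a large interval forces a large Jacobian of $\vp$ in the (small) complement, which in the supercritical regime translates into a large geodesic distance.
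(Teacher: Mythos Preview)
Your argument is correct. Parts (1) and (3) match the paper's proof essentially verbatim: a rotation by $1/2$ for small intervals, and for the supercritical case the observation that displacing $(0,1-\delta)$ forces $\vp'$ to exceed $(1-\delta)/\delta$ somewhere on the complement, which the estimate \eqref{eq:lower_bound_derivative_dist} converts into a $|\log\delta|$ lower bound.

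For part (2) you take a variant route. The paper first notes that the bound follows trivially from the finite-diameter result (Theorem~\ref{thm:diam_S_1}), and then sketches an independent argument: it flows along the explicit field $u_{\alpha,\e}(x)\approx x^{1-\alpha}$ to produce a compression $\vp^{\alpha,\e}$ and then \emph{conjugates} the half-turn $T_{1/2}$ by it, i.e.\ uses $(\vp^{\alpha,\e})^{-1}\circ T_{1/2}\circ \vp^{\alpha,\e}$. You instead take the compression maps $\psi_{\lambda,\delta}$ of Lemma~\ref{lem:psikS1} off the shelf and \emph{compose} rather than conjugate: $R_{1-\delta/2}\circ\psi_{\lambda,\delta}^{-1}$ sends $(0,1-\delta)$ into $(1-\delta/2,1)$ directly. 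Both arguments implement the same idea---shrink the large interval to a small one at uniformly bounded cost, then rotate it off itself---but yours is arguably the more economical packaging given that Lemma~\ref{lem:psikS1} is already available, while the paper's alternative has the advantage of being self-contained (it does not rely on the Appendix~\ref{app:technical_lemma} computation). One minor remark: Lemma~\ref{lem:psikS1} is stated for $\lambda\in\mathbb N$, so strictly you should take $\lambda=\lceil 2(1-\delta)/\delta\rceil$; and you are implicitly using that $\dist_{W^{s,p}(S^1)}(\id,\psi_{\lambda,\delta})\le \dist_{W^{s,p}([0,1])}(\id,\psi_{\lambda,\delta})$, which is justified in the proof of Lemma~\ref{lem:contractions_S_1}.
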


\begin{remark}
Note that we do not know whether the displacement energy is bounded or not in the critical case $s=1+1/p$.
\end{remark}

\begin{proof}
The first assertion follows by flowing for time $1$ along the constant vector field $u(t,x) = 1/2$, whose $W^{s,p}$-norm is $\frac{1}{2}$.

The last assertion follows from \eqref{eq:lower_bound_derivative}.
Indeed, if $\vp\in \Diff(S^1)$ such that $\vp((0,1-\delta))\cap (0,1-\delta) = \emptyset$, then $\vp((1-\delta, 1)) \supset (0,1-\delta)$, and therefore
\[
\delta \max(\vp') \ge \int_{1-\delta}^1 \vp'(x)\,dx = \vp(1) - \vp(1-\delta) \ge (1-\delta) - 0 = 1-\delta.
\]
Therefore $\max(\vp') > \frac{1-\delta}{\delta}$, and so by \eqref{eq:lower_bound_derivative}, $\dist_{s,p} (\id,\vp) \ge c|\log\delta|$.

The second assertion follows directly from Theorem~\ref{thm:diam_S_1}. In the following we will sketch an alternative simpler proof.
{C}onsider the vector field
\beq
\label{eq:u_alpha_eps}
u_{\alpha,\e}(x) = 
\begin{cases}
\e^{-\alpha}x & x\in [0,\e) \\
x^{1-\alpha} & x\in [\e,3/4) \\
4\brk{\frac{3}{4}}^{1-\alpha}(1-x) & x\in [3/4,1),
\end{cases}
\eeq
where $\alpha < 1+ \frac{1}{p} - s$ and $\e\ll 1$ to be determined (note that this vector field is simply $x^{1-\alpha}$ with linear interpolations to $0$ at $0\sim 1$).
A direct calculation shows that $\|u_{\alpha,\e}\|_{s,p} < C(s,p,\alpha)<\infty$ when $(\alpha + (s-1))p<1$.
Furthermore, we have that {the flow $\vp_t$ along $u_{\alpha,\e}$ (that is, }the solution to $\pl_t\vp_t = u_{\alpha,\e}\circ \vp_t$, $\vp_{0}(x)=x$) satisfies, for $x\ge \e$, and as long as $\vp_t(x)<3/4$,
\[
\vp_t(x) = (x^\alpha + \alpha t)^{1/\alpha} > (\alpha t)^{1/\alpha},
\]
hence in particular, for $t_0 = \frac{1}{\alpha 2^\alpha}$, $\vp^{\alpha,\e} = \vp_{t_0}$ satisfies
\[
\vp^{\alpha,\e}(0) = 0, \quad \vp^{\alpha,\e}(1) = 1, \quad \vp^{\alpha,\e}(x) > 1/2 \,\,\text{ for any $x > \e$}
\]
and
\[
\dist_{s,p}(\id, \vp^{\alpha,\e}) < \frac{1}{\alpha 2^\alpha}C(s,p,\alpha).
\]

Consider now the interval $I=(\delta ,1)$ for some $\delta <1/2$, and let $\e < \delta$.
Then 
\[
\psi = (\vp^{\alpha,\e})^{-1} \circ T_{1/2}\circ  \vp^{\alpha,\e}
\]
where $T_{1/2}$ is the translation by $1/2$, satisfies
\[
\psi(I) \subset (\vp^{\alpha,\e})^{-1} \circ T_{1/2} ( (1/2,1)) = (\vp^{\alpha,\e})^{-1} (0,1/2) \subset (0,\delta),
\]
hence $\psi(I)\cap I = \emptyset$.
Since
\[
\dist_{s,p}(\id,\psi) \le 2\dist_{s,p}(\id, \vp^{\alpha,\e}) + \dist_{s,p}(\id, T_{1/2}) \le  \frac{2}{\alpha 2^\alpha}C(s,p,\alpha) + \frac{1}{2}
\]
and the right-hand side is uniformly bounded in $\delta$, the proof is complete.
\end{proof}

\section{The diameter of  $\Diff(M)$ for compact manifolds in higher dimensions}\label{sec:diffM_diameter}
In this section we prove Theorem~\ref{thm:diam_compactM} in its full generality.
The main analytic ideas of the proof are similar to the one-dimensional case, however their adaptation to the higher dimensional settings is not always immediate;
in particular, for the boundedness proof, we need the diffeomorphism group to have a localization property, which we call {\em uniform fragmentation property}, described in Section~\ref{sec:diamM_subcritical}.

The structure of the section is as follows: in Section~\ref{sec:diamM_supercritical} we give a simple proof for the unboundedness of the diameter of $\Diff(\M)$ (for any manifold) when $s>1 + \frac{\dim\M}{p}$, and in Section~\ref{sec:diamM_critical} we give a more elaborate proof for the unboundedness in the case $s\ge 1 + \frac{\dim\M}{p}$.
We then present and discuss the uniform fragmentation property in Section~\ref{sec:diamM_subcritical}, and prove boundedness of $\Diff(\M)$ for manifolds that satisfy this property (like spheres), when $s< 1 + \frac{1}{p}$.
This completes the proof of Theorem~\ref{thm:diam_compactM}, as the zero diameter result of item~1 follows directly from  the results on vanishing geodesic distance in~\cite{jerrard2018vanishing,jerrard2019geodesic,bauer2013geodesic}.

As noted in the introduction, when $\dim \M > 1$, we have a gap in the range $s\in \left[1+ \frac{1}{p},1+ \frac{\dim \M}{p}\right)$.
We believe that in these cases the diameter is finite (assuming that the fragmentation property is satisfied); we show an indication for this in Section~\ref{sec:disp_energy_M}.

\subsection{The supercriticial case $s> 1+\frac{\operatorname{dim} \M}{p}$}\label{sec:diamM_supercritical}

\begin{lemma}
Let  $s>1+\frac{\operatorname{dim} \M}{p}$. Then the geodesic distance of the right-invariant $W^{s,p}$-norm on $\Diff(\M)$ satisfies
\beq
\label{eq:lower_bound_jacobian_dist}
\log |D\vp| \leq C \dist_{s,p} (\id,\vp)\;,
\eeq
where $C= C(s,p,\dim\M,\g)$, and $|D\vp|$ is the Jacobian determinant $\vp$ with respect to a chosen Riemannian metric $\g$ on $\M$.
It follows that $\diam_{s,p} \Diff(\M)$ is unbounded.
\end{lemma}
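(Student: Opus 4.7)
The plan is to imitate directly the one-dimensional argument from Section~\ref{sec:diams1_supercritical}, replacing the derivative $\vp'$ by the Jacobian determinant $|D\vp|$ and $\pl_x u$ by the divergence $\div u$. The assumption $(s-1)p > \dim \M$ gives the Sobolev embedding $W^{s,p}(\M) \hookrightarrow C^1(\M)$, which is what fuels the whole estimate.

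First I would fix a smooth curve $\vp_t$ joining $\id$ to $\vp$ in $\Diff(\M)$, and let $u_t \in \mathcal{X}(\M)$ be its right-logarithmic derivative, i.e.\ $\pl_t \vp_t = u_t \circ \vp_t$. Differentiating the relation $\vp_t^* \dVol_\g = |D\vp_t|\, \dVol_\g$ (with $|D\vp_t|$ the Jacobian determinant with respect to the Riemannian volume form of a fixed metric $\g$) gives the well-known identity
\[
\pl_t \log |D\vp_t|(x) = (\div u_t)(\vp_t(x)),
\]
where $\div$ is the Riemannian divergence. Integrating in $t$ and using $|D\vp_0| \equiv 1$ yields, for any $x \in \M$,
\[
\log |D\vp|(x) = \int_0^1 (\div u_t)(\vp_t(x))\, dt \le \int_0^1 \|\div u_t\|_{L^\infty(\M)}\, dt.
\]

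Next I would invoke the Sobolev embedding: since $(s-1)p > \dim \M$, we have $W^{s-1,p}(\M) \hookrightarrow L^\infty(\M)$ (equivalently, $W^{s,p} \hookrightarrow C^1$), and $\div$ is a bounded operator from $W^{s,p}$ into $W^{s-1,p}$. Hence there is a constant $C = C(s,p,\dim \M, \g)$ with
\[
\|\div u_t\|_{L^\infty} \le C \|u_t\|_{W^{s,p}}.
\]
Combining this with the previous display and taking the infimum over all curves $\vp_t$ connecting $\id$ to $\vp$ produces the claimed bound $\log |D\vp|(x) \le C \dist_{s,p}(\id,\vp)$ pointwise in $x$.

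For the diameter statement it then suffices to exhibit a sequence $\vp_k \in \Diff(\M)$ with $\sup_{x \in \M} |D\vp_k|(x) \to \infty$. Pick any coordinate ball, and construct $\vp_k$ supported in that ball so that near the centre $\vp_k$ acts as a radial compression (e.g.\ a smoothly cut-off version of $x \mapsto x/k$); such diffeomorphisms are compactly supported in the chart and hence extend by the identity to all of $\M$, and their Jacobian determinants at the origin are of order $k^{-\dim \M}$, so $\log |D\vp_k^{-1}|$ is of order $\dim\M \cdot \log k \to \infty$. Applied to $\vp_k^{-1}$, the inequality gives $\dist_{s,p}(\id, \vp_k^{-1}) \to \infty$.

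The only potentially delicate point is that the pointwise identity $\pl_t \log |D\vp_t| = (\div u_t)\circ \vp_t$ and the boundedness of $\div$ on $W^{s,p}$ should be stated intrinsically on the Riemannian manifold $(\M,\g)$; this is standard, but the constants inevitably depend on $\g$. Aside from that, the argument is a direct transcription of Section~\ref{sec:diams1_supercritical} to higher dimensions.
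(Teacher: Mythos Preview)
Your proof is correct and follows essentially the same approach as the paper: differentiate $\log|D\vp_t|$ along a path to obtain $(\div u_t)\circ\vp_t$, bound $\|\div u_t\|_{L^\infty}$ by $C\|u_t\|_{W^{s,p}}$ via the Sobolev embedding $W^{s,p}\hookrightarrow C^1$, integrate, and take the infimum over paths. The paper concludes the unboundedness simply by noting that $|D\vp|$ can be made arbitrarily large at a point, whereas you spell out an explicit radial construction; this is a cosmetic difference only.
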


\begin{proof}
In this regime we have the Sobolev embedding $W^{s,p}(\M) \subset C^1(\M)$.
Let $\vp_t$ be a curve starting at $\id$ and ending at $\vp$, and let $u_t$ its associated vector field, that is $\pl_t\vp_t = u_t\circ \vp_t$.
Denote $\psi_t = |D\vp_t|$, the Jacobian determinant with respect to $\g$.
We have $\pl_t \psi_t = \div(u_t)\circ \vp \cdot \psi_t$, or in other words, $\pl_t (\log \psi_t)(x) = \div(u_t)(\vp_t(x))$.
Integrating this, using the fact that $\log \psi_0 = 0$, we have for any $(s-1)p>\operatorname{dim} \M$ and any $x\in \M$,
\[
\log |D\vp|(x) = \int_0^1 \pl_t (\log \psi_t(x)) \,dt \le \int_0^1 \|\div(u_t)\|_{L^\infty} \,dt \le C \int_0^1 \|u_t\|_{W^{s,p}} \,dt,
\]
where in the last inequality we used the above-mentioned Sobolev embedding.
By choosing $\vp$ with $|D\vp|$ arbitrarily large at a point, we get an arbitrarily large lower bound to the diameter.
\end{proof}

\begin{remark}
As in the one dimensional case, for $p=1$ this proof also works for the critical case $s = 1 +\dim \M$, as $W^{1+\dim \M,1}(\M) \subset C^{1}(\M)$.
\end{remark}

\subsection{The critical case $s= 1+\frac{\operatorname{dim} \M}{p}$}\label{sec:diamM_critical}
\begin{lemma}
Let $p\geq 1$ and $s= 1+\frac{\operatorname{dim} \M}{p}$. Then the diameter $\diam_{s,p} \Diff(\M)$ of $\Diff(\M)$ with respect to the right-invariant $W^{s,p}$-norm is unbounded. 
\end{lemma}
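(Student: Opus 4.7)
The plan is to mirror the one-dimensional critical case of Section~\ref{sec:diams1_critical}: introduce an auxiliary, highly degenerate $W^{1,q}$-type pseudo-norm on $\Diff(\M)$ that sees only infinitesimal volume change, isometrically embed it into an $L^q$-space of positive functions on $\M$, prove that the induced pseudo-diameter grows at least linearly in $q$, and transfer this bound to the $W^{s,p}$-distance via the critical Sobolev embedding of Lemma~\ref{lem:critical_embedding}, whose constants grow only like $q^{1-1/p}$.

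Concretely, write $n=\dim\M$, fix a Riemannian metric on $\M$, and for each $q\in[p,\infty)$ define
\[
\Phi_q:\Diff(\M)\to C^\infty(\M,\R_{>0}),\qquad \Phi_q(\vp)=q\,|D\vp|^{1/q},
\]
where $|D\vp|$ is the Jacobian determinant. For a path $\vp_t$ with Eulerian vector field $u_t=\pl_t\vp_t\circ\vp_t^{-1}$, the pointwise identity $\pl_t|D\vp_t|=(\div u_t)\circ\vp_t\cdot|D\vp_t|$ combined with a change of variables gives $\|\pl_t\Phi_q(\vp_t)\|_{L^q(\M)}=\|\div u_t\|_{L^q(\M)}$; equivalently, the $L^q$-length of the image curve $\Phi_q(\vp_t)$ equals the length of $\vp_t$ for the right-invariant pseudo-norm $\|u\|_\ast := \|\div u\|_{L^q}$. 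Denoting the induced pseudo-distance on $\Diff(\M)$ by $\dist_\ast$, this yields
\[
\dist_\ast(\vp_0,\vp_1)\;\ge\;\|\Phi_q(\vp_0)-\Phi_q(\vp_1)\|_{L^q}.
\]
I would then exhibit an explicit $\vp^q\in\Diff(\M)$, supported in a small coordinate ball $B$, as a higher-dimensional analog of the one-dimensional spike used in Lemma~\ref{lem:Wsp_sphere}: for instance, a radial diffeomorphism of $B$ whose Jacobian takes a value of order $N^q$ on a sub-ball of measure of order $N^{-q}$ and is small on the complement. A direct $L^q$-computation, identical in structure to the one following Lemma~\ref{lem:Wsp_sphere}, produces a constant $c_\M>0$ independent of $q$ such that $\|\Phi_q(\vp^q)-q\|_{L^q}\ge c_\M q$, and hence $\dist_\ast(\id,\vp^q)\ge c_\M q$.

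To transfer this lower bound to $\dist_{s,p}$, observe that at criticality $s-1=n/p$. Applying Lemma~\ref{lem:critical_embedding} in local charts, as in Section~\ref{sec:Sobolev_manifolds}, gives $\|f\|_{L^q(\M)}\le Cq^{1-1/p}\|f\|_{W^{n/p,p}(\M)}$ for every scalar $f$ and every $q\ge p$, with $C$ independent of $q$. Combined with the standard Sobolev bound $\|\div u\|_{W^{s-1,p}}\le C'\|u\|_{W^{s,p}}$, this yields $\|\div u\|_{L^q}\le C''q^{1-1/p}\|u\|_{W^{s,p}}$ uniformly in $q$, and integrating along paths gives $\dist_\ast(\id,\vp^q)\le C''q^{1-1/p}\dist_{s,p}(\id,\vp^q)$. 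Combining with the previous step,
\[
\dist_{s,p}(\id,\vp^q)\;\ge\;\frac{c_\M}{C''}\,q^{1/p},
\]
and letting $q\to\infty$ completes the proof. The main technical obstacle is the construction of $\vp^q$ and the uniform-in-$q$ bound $\|\Phi_q(\vp^q)-q\|_{L^q}\ge c_\M q$: unlike on the circle, in dimension $n\ge 2$ the map $\Phi_q$ is far from a bijection, so one must produce an explicit (e.g.\ radially symmetric) diffeomorphism whose Jacobian concentrates as prescribed, and the linear-in-$q$ gap must be strong enough to beat the $q^{1-1/p}$ loss coming from the critical Sobolev embedding.
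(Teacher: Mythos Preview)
Your proposal is correct and follows essentially the same route as the paper: the degenerate $\|\div u\|_{L^q}$-pseudometric, the isometric map $\Phi_q(\vp)=q|D\vp|^{1/q}$, the linear-in-$q$ lower bound via a Jacobian spike, and then the critical Sobolev embedding of Lemma~\ref{lem:critical_embedding} (applied to $\div u$) to transfer the bound to $W^{s,p}$, picking up exactly a $q^{1-1/p}$ loss. The paper disposes of your ``main technical obstacle'' simply by noting that $\Phi_q$ is an immersion \emph{onto} the positive $L^q$-sphere $\mathcal{U}_q$---this surjectivity is Moser's theorem, so any spike function $g\in\mathcal{U}_q$ is automatically $\Phi_q(\vp^q)$ for some $\vp^q$; your explicit radial construction in a chart is equally valid and perhaps more self-contained.
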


\begin{proof}
The proof of this result is inspired by connections between a homogeneous, degenerate $H^1$-metric --- called the information metric --- on the group of diffeomorphisms and the Fisher--Rao metric on the space of probability densities, 
see \cite{khesin2013geometry}.  Similarly as in the proof of the one-dimensional situation we aim to generalize this result to $W^{1,q}$-metrics for general $q\geq 1$.

We start by introducing a right-invariant, degenerate Sobolev (Finsler) metric of order one on the diffeomorphism group:
\begin{equation}
F_{\operatorname{Id}}(X)= \brk{\int_\M |\operatorname{div}(X)|^q \mu\;}^{{1/q}}
\end{equation}
where $\mu$ is some fixed volume form on $\M$ and $\operatorname{div}$ is the divergence with respect to $\mu$.
In the Riemannian case, $q=2$, this metric is also called  information metric due to its connections to the Fisher-Rao metric on the space of probability densities \cite{khesin2013geometry,bauer2015diffeomorphic}, which is the central object of interest in the area of information geometry \cite{amari2007methods}.

In particular, we have for $h\in T_\vp \Diff(\M)$
\beq
\label{eq:q_information_metric}
(F_{\vp(h)})^q = \int_\M |\operatorname{div}(h \circ \vp^{-1})|^q \mu = \int_\M |\operatorname{div}(h \circ \vp^{-1})\circ \vp|^q\, \vp^*\mu.
\eeq

Consider the mapping
\begin{equation}
\Phi: \operatorname{Diff}(\M) \to C^{\infty}(\M,\mathbb R), \qquad \varphi \mapsto q |D\varphi|^{1/q},
\end{equation}
where $|D\vp|$ the Jacobian determinant of $\vp$ with respect to $\mu$ (that is, $\vp^*\mu = |D\vp|\mu$).
Denote the function $\vp \mapsto |D\vp|$ by $\tilde \Phi$.
We have that
\beq
\label{eq:dF}
d\Phi(\vp).h = |D\vp|^{1/q-1} d\tilde \Phi(\vp).h = |D\vp|^{1/q} \div (h\circ \vp^{-1}) \circ \vp,
\eeq
since if $\vp(t)$ is a curve with $\vp(0) = \vp$ and $\dot \vp(0) = h$, then
\[
\begin{split}
\left.\frac{d}{dt}\right|_{t=0} \vp(t)^*\mu 
	&= \left.\frac{d}{dt}\right|_{t=0} (\vp(t)\circ \vp^{-1}\circ \vp)^*\mu
	=  \left.\frac{d}{dt}\right|_{t=0} \vp^*(\vp(t)\circ \vp^{-1})^*\mu
	=  \vp^*\brk{\left.\frac{d}{dt}\right|_{t=0} (\vp(t)\circ \vp^{-1})^*\mu} \\
	&=  \vp^*\brk{\mathcal{L}_{h\circ \vp^{-1}} \mu} 
	= \vp^*\brk{\div(h\circ \vp^{-1}) \mu}
	= \div(h\circ \vp^{-1}) \circ \vp \, \vp^*\mu \\
	&= \div(h\circ \vp^{-1}) \circ \vp \, |D\vp| \mu.
\end{split}
\] 
After equipping $C^{\infty}(M,\mathbb R)$ with the flat $L^q$ metric 
\begin{equation}
| \delta f|_f = \left(\int_{M} |\delta f|^q \; \mu\right)^{1/q},
\end{equation}
equations \eqref{eq:q_information_metric} and \eqref{eq:dF} imply that the mapping 
$\Phi$ is a Riemannian immersion onto the 
positive $L^q$-sphere in $C^{\infty}(M,\mathbb R)$ with image
\[
\mathcal{U}_q := \BRK{ f\in C^{\infty}(M;\R) \,:\, f>0, \|f\|_{L^q} = q \,(\Vol_\mu(\M))^{1/q}}.
\]

The proof now continues as in the one-dimensional case (Section~\ref{sec:diams1_critical}):
The intrinsic diameter of $\mathcal{U}_q$ with respect to the $L^q$ metric is bounded below by a constant (depending on $\M$) times $q$ --- indeed, by choosing $f\equiv q$ and a function $g\in \mathcal{U}_q$ that is large on a small set and close to zero on the rest of $M$, we have that $\|f-g\|_{L^q}\ge Cq$.
Since the $W^{1,q}$-norm controls the degenerate metric $F$, and the map $\Phi$ is an immersion, we have that
\[
\diam_{1,q}\Diff(\M) \ge Cq,
\]
for some constant $C$ independent of $q$.
The infinite diameter with respect to the $W^{1+\frac{\dim\M}{p},p}$-norm follows in the same way as in Section~\ref{sec:diams1_critical}, using the embedding $W^{1+\frac{\dim\M}{p},p} \subset W^{1,q}$ as in Lemma~\ref{lem:critical_embedding} and taking $q\to \infty$.
\end{proof}

\subsection{The subcritical case $s<1+\frac{1}{p}$}\label{sec:diamM_subcritical}
We start by introducing the geometric property of $\Diff(\M)$ which we need to prove the boundedness:
\begin{definition}\label{def:uni_fragment}
Let $\M$ be a compact finite dimensional manifold. The diffeomorphism group $\Diff(\M)$ is said to satisfy the {\em uniform fragmentation property}, if there exists a constant $K>0$ and a finite cover of $\M$ by balls $B_{\epsilon_\alpha}(x_\alpha)$ on which normal coordinates are defined, such that any diffeomorphism $\vp \in \Diff(M)$ can we written as a product of $K$ diffeomorphisms $\vp_i$, where each $\vp_i$ is only supported in one ball $B_{\epsilon_\alpha}(x_\alpha)$.
\end{definition}

\begin{remark}
\begin{enumerate}
\item If we denote by $K(\vp)$ the minimal number of diffeomorphisms $\vp_i$ needed in such a decomposition of $\vp\in \Diff(\M)$, then $K(\vp)<\infty$ for any fixed cover of any compact manifold --- this is the content of the well-known "fragmentation lemma" (see, e.g., \cite[Lemma~2.1.8]{banyaga2013structure}).
	The quantity $K(\vp)$ is sometimes referred to as the \emph{fragmentation norm} of $\vp$ with respect to this cover (see, e.g., in the context of homeomorphisms, \cite[Section~2.3]{mann2018large}).
	We ask for this fragmentation norm to be uniformly bounded, independent of $\vp$.
\item If, instead of a fixed finite cover, we consider a cover by all open (topological) balls, then $K(\vp)$ is a conjugation-invariant norm, in the sense of \cite{burago2008conjugation} (see Example~1.14 there). 
	As such, it is known to be uniformly bounded for many manifolds, see, e.g., \cite[Theorem~VI]{fukui2019uniform} for a recent account on this.
	Unfortunately, we cannot allow for the balls to be arbitrary, as they are fixed a-priori in the definition of the norm, as discussed in Section~\ref{sec:Sobolev_manifolds} (see also the proof of Proposition~\ref{pn:localization} below).
\end{enumerate}
\end{remark}

Next we show that the $n$-dimensional sphere $S^n$ satisfies the uniform fragmentation property:
\begin{proposition}
Let $n\ge 1$ and let $A,B\subset S^n$ be open geodesic balls, $A\cup B = S^n$ and $\overline{A},\overline{B}\ne S^n$.
Then $\Diff(S^n)$ has an uniform fragmentation property with respect to the cover $\{A,B\}$.
\end{proposition}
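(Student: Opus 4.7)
My plan is to reduce to the bounded-fragmentation result for $\Diff(S^n)$ available in the literature (with respect to the family of \emph{all} open topological balls) and then to conjugate each resulting piece into $A$ or $B$ via a single additional element of $\Diffc(A)\cup\Diffc(B)$. This will yield uniform fragmentation with $K=3N$, where $N$ is the bound furnished by Fukui's theorem.

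First, by the Burago--Ivanov--Polterovich / Fukui theorem referenced in the remark following Definition~\ref{def:uni_fragment} (see~\cite{fukui2019uniform}), the conjugation-invariant fragmentation norm on $\Diff(S^n)$ with respect to the family of all open topological balls is uniformly bounded by some $N=N(n)$. Hence every $\vp\in\Diff(S^n)$ decomposes as $\vp=\psi_1\circ\cdots\circ\psi_N$ with each $\psi_i$ supported in an open ball $D_i\subset S^n$ (depending on $\vp$, and without loss of generality small enough that each $D_i\subsetneq S^n$ has a nonempty complement). I then reduce the proposition to the following localization statement: any $\psi\in\Diffc(D)$, with $D\subsetneq S^n$ an open ball, can be written as a product of three elements of $\Diffc(A)\cup\Diffc(B)$.

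To prove this local statement, it suffices to construct a single $\Theta\in\Diffc(A)\cup\Diffc(B)$ with $\Theta(D)\subset A$ or $\Theta(D)\subset B$: then $\Theta\psi\Theta^{-1}$ has support $\Theta(\supp\psi)\subset\Theta(D)$ contained in $A$ (or $B$), hence lies in $\Diffc(A)\cup\Diffc(B)$, and $\psi=\Theta^{-1}(\Theta\psi\Theta^{-1})\Theta$ is the required three-factor expression. Set $E:=S^n\setminus D$, a closed geodesic ball with nonempty interior, and $K_A:=S^n\setminus A\subset B$, $K_B:=S^n\setminus B\subset A$. Since $A\cup B=S^n$ and $D\ne S^n$, at least one of $E\cap A$, $E\cap B$ is nonempty; assume the latter (the other case is symmetric). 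A short topological observation shows $\operatorname{int}(E)\cap B\ne\emptyset$: otherwise $\operatorname{int}(E)\subset K_B$ would force $E=\overline{\operatorname{int}(E)}\subset K_B$, contradicting $E\cap B\ne\emptyset$. Hence $E\cap B$ contains a closed subball $\bar U$ of $B$. Since $K_A$ is a proper closed subball of $B$ (using $\overline A\ne S^n$), the isotopy-extension theorem --- equivalently, the transitivity of $\Diffc(B)$ on isotopy classes of embedded closed balls in $B$ with nonempty complement --- provides $\beta\in\Diffc(B)$ with $\beta(\bar U)\supset K_A$. As $\beta$ is the identity on $B^c\subset A$, we obtain $\beta(E)\supset\beta(\bar U)\supset K_A$, whence $\beta(D)=S^n\setminus\beta(E)\subset S^n\setminus K_A=A$, and $\Theta:=\beta$ works.

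\emph{Main obstacle.} The substantive ingredient is the Fukui-type bound $N$ in the first step, which I cite from the literature; the remainder is a soft ambient-isotopy construction that makes essential use of both $\overline A,\overline B\ne S^n$ (to ensure $K_A,K_B$ are \emph{proper} closed subballs of $B,A$) and $A\cup B=S^n$ (to produce the dichotomy $E\cap A\ne\emptyset$ or $E\cap B\ne\emptyset$ on which the case analysis hinges).
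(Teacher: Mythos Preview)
Your argument is correct in outline and takes a genuinely different route from the paper. The paper's proof is self-contained and explicit: it first writes any $\vp\in\Diff(S^n)$ as $\vp_B\circ\vp_A$, where $\vp_A$ (resp.\ $\vp_B$) is the identity on a small ball inside $B^c$ (resp.\ $A^c$), and then shows by a direct construction that each of $\vp_A,\vp_B$ is a product of at most three diffeomorphisms supported in $A$ or $B$, yielding $K=6$. No external fragmentation bound is invoked; the paper in fact remarks that the Fukui-type result ``cannot'' be used directly because the balls there are not fixed a priori.

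Your approach shows how to bridge exactly that gap: you import the uniform bound $N$ from the Burago--Ivanov--Polterovich/Fukui theory (fragmentation with respect to \emph{arbitrary} balls) and then reduce to a single clean local step --- any $\psi$ supported in a ball $D$ can be conjugated by one element of $\Diffc(A)\cup\Diffc(B)$ so that its support lands in $A$ or $B$. This gives $K=3N$, typically a larger constant, but the structure is more modular and would generalize: any closed manifold for which the arbitrary-ball fragmentation norm is bounded and for which one can ``push'' any ball off one chart by an isotopy supported in the other will satisfy uniform fragmentation for such a two-chart cover. Two minor imprecisions worth fixing: $E=S^n\setminus D$ is not a \emph{geodesic} ball unless $D$ is (your argument only needs $\operatorname{int}(E)\ne\emptyset$, which is what you actually use), and the WLOG clause should say each $D_i$ has complement with nonempty \emph{interior} --- this is harmless since $\supp\psi_i$ is compact in $D_i$, so $D_i$ can be shrunk. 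Neither affects the validity of the proof.
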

\begin{proof}
Let $\vp\in \Diff(S^n)$.
The proof consists of two steps: 
\begin{enumerate} 
\item First we split $\vp = \vp_B\circ  \vp_A$, such that $S^n \setminus \supp \vp_A$ contains a ball $U_A$ satisfying $\overline{U}_A\subset B^c \subset A$, and similarly to $\vp_B$.
\item We then show that each of $\vp_A$ and $\vp_B$ can be written as a composition of at most $3$ diffeomorphisms, each supported either in $A$ or in $B$.
\end{enumerate}

\paragraph{Step I:}
Fix $x\in \vp^{-1}(A^c)$, then there exists an open ball $V_B$, containing $x$, such that $\overline{V}_B\subset \vp^{-1}(A^c)$, and therefore $B^c\cap \vp(\overline{V_B}) = \emptyset$. 
We can choose $V_B$ small enough such that $B^c \setminus \overline{V_B} \ne \emptyset$ as well.
Then there exists $\vp_A\in \Diff(S^n)$ such that $\vp_A|_{V_B} = \vp|_{V_B}$ and $B^c\setminus \supp \vp_A \ne \emptyset$, hence there exists a ball $U_A$ such that $\overline{U}_A \subset B^c\setminus \supp \vp_A$.
Setting $\vp_B = \vp \circ \vp_A^{-1}$, we have that $\vp_B|_{\vp(\overline{V_B})} = \id$, hence $\vp(V_B) \subset A^c\setminus \supp \vp_B$.
Choosing a ball $U_B$ such that $\overline{U_B} \subset \vp(V_B)$ completes this step.

\paragraph{Step II:}
We prove the result for $\vp_A$; the case of $\vp_B$ is analogous.
We decompose $\vp_A= \vp_3\circ \vp_2 \circ \vp_1$, where each $\vp_i$ is supported in either $A$ or $B$.

First, we construct $\vp_1$ to make all the points that end in $B$ start in $B$.
Consider the set $\vp_A^{-1}(\overline B)$. 
Since $\overline{U}_A\cap \vp_A^{-1}(\overline B) = \emptyset$, we have
\beq
\label{eq:uniform_frag_1}
A\cap \vp_A^{-1}(\overline B) \subset A\setminus \overline{U}_A.
\eeq
Since $A \setminus \overline{U}_A$ is diffeomorphic to $A\cap B$, there exists a diffeomorphism $\vp_1$ with $\vp_1 (A \setminus \overline{U}_A) = A\cap B$.
Moreover, we can choose $\vp_1$ such that it is supported on $A$ {(see Figure~\ref{fig:fragmentation})}.

\begin{figure}
\centering
\begin{subfigure}{.48\textwidth}
  \centering
  \includegraphics[width=1\linewidth]{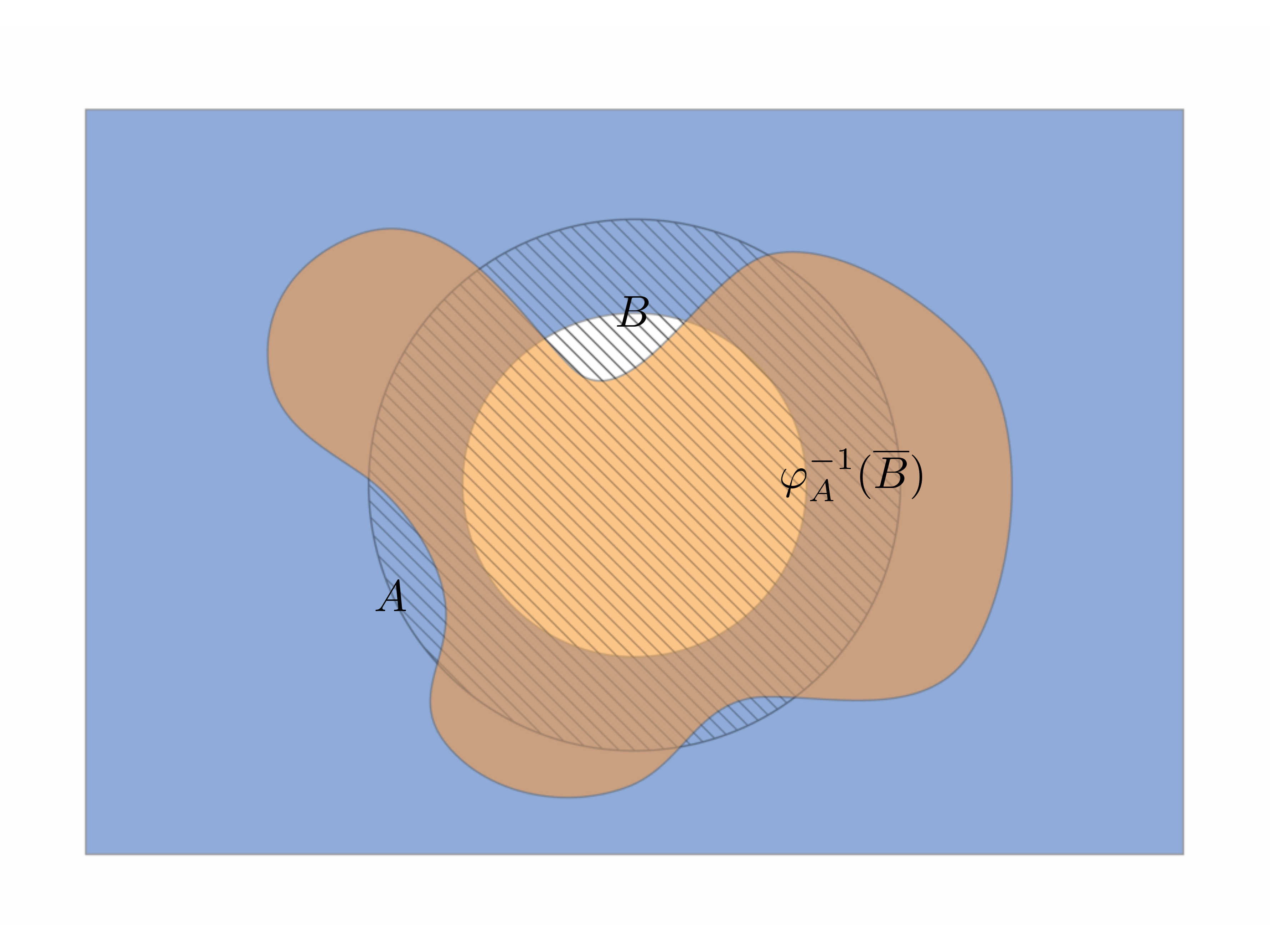}
\end{subfigure}
\begin{subfigure}{.48\textwidth}
  \centering
  \includegraphics[width=1\linewidth]{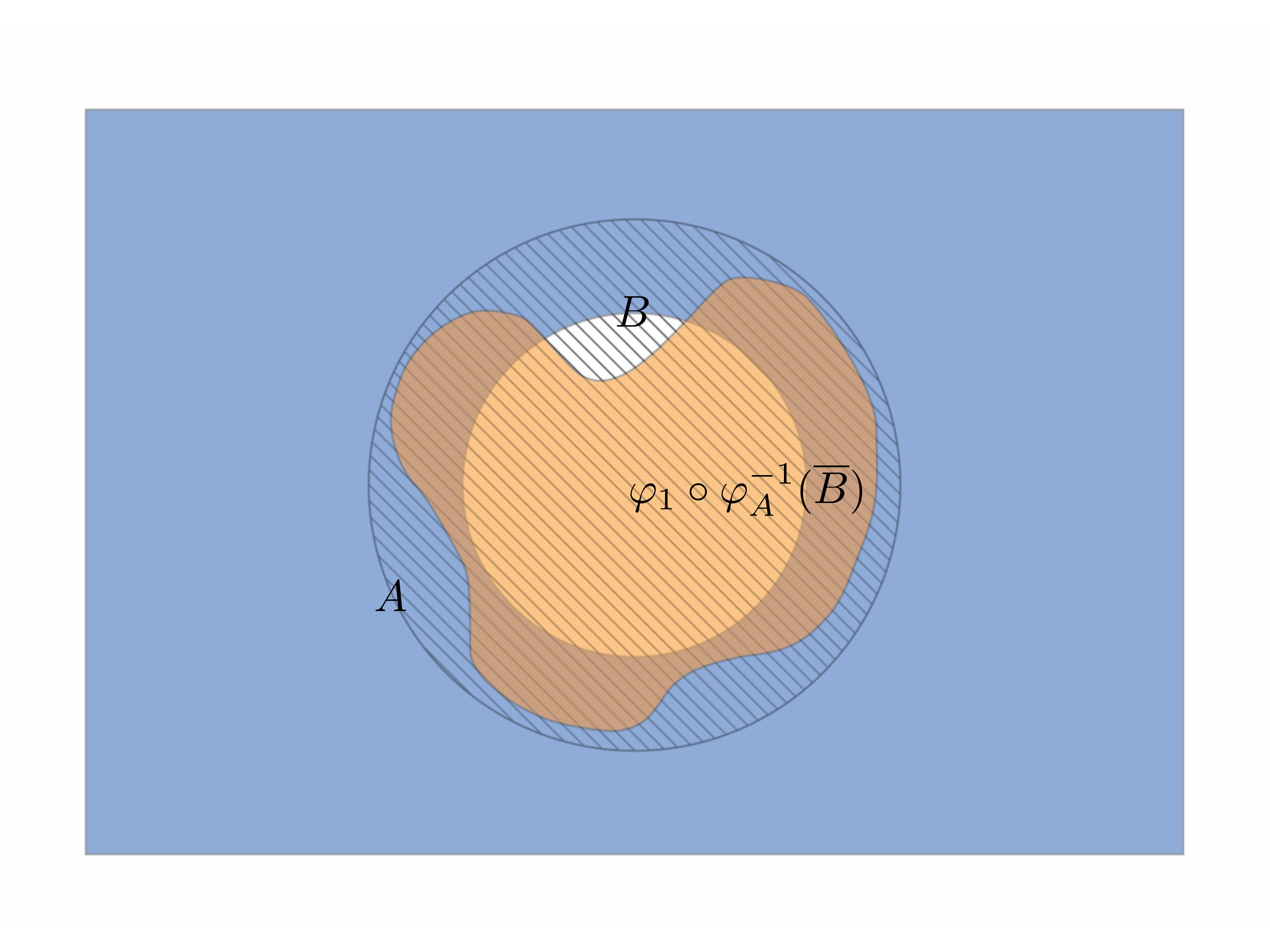}
\end{subfigure}
\caption{A sketch of the construction of $\vp_1$. The sets $A$ and $B$ that cover the sphere are the blue and crossed one, respectively. On the left-hand figure, the orange domain is $\vp_A^{-1}(\overline{B})$. Note that $\vp_A^{-1}(\overline{B})$ does not wrap around $A$ because of \eqref{eq:uniform_frag_1}.
The map $\vp_1$ is supported in $A$, and maps the set $\vp_A^{-1}(\overline{B})$ into $B$, as in the right-hand figure.}
\label{fig:fragmentation}
\end{figure}

We now have that
\beq
\label{eq:uniform_frag_2}
\vp_1 \circ \vp_A^{-1}(\overline B) \subset B.
\eeq
Indeed, by \eqref{eq:uniform_frag_1} we have that 
\[
\vp_1 \brk{A\cap \vp_A^{-1}(\overline B)} \subset A \cap B,
\]
and since $\vp_1$ is supported in $A$, we have also that 
\[
\vp_1 \brk{B\cap \vp_A^{-1}(\overline B)} \subset B.
\]

Next, we move all the points that end in a neighborhood of $A^c$ to their final destination.
Let $D$ be an open ball containing $A^c$, such that $\overline{D}\subset B$.
From \eqref{eq:uniform_frag_2} we have that $\vp_1\circ \vp_A^{-1} (\overline{D})$ is a closed ball contained in $B$.
Therefore, there exists a diffeomorphism $\psi$, supported in $B$, such that $\psi|_{\overline{D}} = \vp_1\circ \vp_A^{-1}|_{\overline{D}}$.
Define $\vp_2 := \psi^{-1}$.

Finally, define $\vp_3 := \vp_A \circ \vp_1^{-1} \circ \vp_2^{-1}$. 
We have that $\vp_3$ is supported in $A$: 
indeed, for any $x\in D$,
\[
\vp_3 (x) = \vp_A \circ \vp_1^{-1} \circ \psi(x) = \vp_A \circ \vp_1^{-1} \circ \vp_1 \circ \vp_A^{-1}(x) = x.
\]
\end{proof}

We will now continue with proving the boundedness of $\diam_{s,p} \Diff(\M)$ by showing that if $\Diff(\M)$ satisfies the uniform fragmentation property, then the question of finiteness of $\diam_{s,p} \Diff(\M)$ can be reduced to the finiteness of the diameter of diffeomorphisms groups of Euclidean balls.
\begin{proposition}\label{pn:localization}
Let $\M$ be a closed $n$-dimensional manifold, such that $\Diff(\M)$ satisfies the uniform fragmentation property with respect to some cover. 
Assume that $\diam_{s,p}\Diff_c(B) < \infty$, where $B$ is the unit ball of $\R^n$.
Then, $\diam_{s,p}\Diff(\M) < \infty$.
\end{proposition}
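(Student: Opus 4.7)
The plan is to combine the uniform fragmentation property with a local-to-global comparison of Sobolev norms, reducing the global diameter question to the hypothesized bound on $\Diffc(B)$.

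First, I would apply uniform fragmentation to decompose any $\vp\in \Diff(\M)$ as $\vp = \vp_K\circ \cdots \circ \vp_1$, where each $\vp_i$ is supported in a single ball $B_{\epsilon_{\alpha_i}}(x_{\alpha_i})$ of the fixed cover and $K$ is independent of $\vp$. Right-invariance of the Finsler metric then gives the triangle inequality
$$
\dist_{s,p}(\id,\vp) \le \sum_{i=1}^K \dist_{s,p}(\id,\vp_i),
$$
so it suffices to produce a uniform upper bound for $\dist_{s,p}(\id,\vp_i)$ that depends only on the fact that $\vp_i$ is supported in one ball of the (fixed) cover.

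Next, I would localize via normal coordinates. For $\vp_0\in \Diff(\M)$ supported in a chart ball $B_\epsilon(x_\alpha)$, the map $\exp_{x_\alpha}$ identifies $\vp_0$ with a compactly supported diffeomorphism $\tilde\vp_0\in \Diffc(B_\epsilon(\R^n))$. Any path $\tilde\vp_t$ in $\Diffc(B_\epsilon(\R^n))$ from $\id$ to $\tilde\vp_0$ pushes forward (extended by the identity outside the chart) to a path $\vp_t$ in $\Diff(\M)$ whose associated vector field $u_t$ is supported in $B_\epsilon(x_\alpha)$. The key claim I need is
$$
\|u_t\|_{W^{s,p}(\M)} \le C\,\|\tilde u_t\|_{W^{s,p}(\R^n)},
$$
for a constant $C$ depending only on the fixed cover, where $\tilde u_t$ is the Euclidean pullback of $u_t$. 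This follows because only the finitely many charts $B_{\epsilon_\beta}(x_\beta)$ meeting $B_\epsilon(x_\alpha)$ contribute to the partition-of-unity sum defining $\|u_t\|_{W^{s,p}(\M)}$, and for each such $\beta$ the function $(\rho_\beta u_t)\circ \exp_{x_\beta}$ is obtained from $\tilde u_t$ by multiplication with a fixed compactly supported smooth function and composition with the smooth coordinate change $\exp_{x_\beta}^{-1}\circ \exp_{x_\alpha}$, both of which are bounded operations on $W^{s,p}(\R^n)$. Lemma~\ref{lem:scaling} then lets me rescale the dilation $\tilde u_t(x)\mapsto \epsilon^{-1}\tilde u_t(\epsilon x)$ to transfer the problem to the unit ball $B$ at the cost of a fixed multiplicative constant (the cover has only finitely many radii $\epsilon_\alpha$), and taking the infimum over paths yields
$$
\dist_{s,p}(\id,\vp_0) \le C'\diam_{s,p}\Diffc(B) < \infty.
$$
Combined with the fragmentation triangle inequality, this gives $\diam_{s,p}\Diff(\M) \le KC'\diam_{s,p}\Diffc(B) < \infty$.

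The main obstacle is the norm-equivalence argument in the localization step: although $u_t$ is supported in one chart, the norm on $\M$ is assembled from several overlapping charts whose partition-of-unity pieces are related by smooth but nontrivial transition maps, and for fractional $s$ the Gagliardo seminorm is nonlocal. Handling this cleanly requires invoking the standard multiplier and diffeomorphism-invariance properties of $W^{s,p}(\R^n)$ (equivalently, of the Besov spaces $B^s_{p,p}$ used in Lemma~\ref{lem:critical_embedding}); once these are in hand, the remainder of the argument is essentially bookkeeping over the finite cover.
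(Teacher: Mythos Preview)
Your proposal is correct and follows the same overall architecture as the paper's proof: fragment, apply the triangle inequality, and then bound each locally supported factor by transferring to a Euclidean ball. The one place where your argument diverges from the paper is precisely the step you flag as the ``main obstacle,'' namely controlling the manifold $W^{s,p}$-norm of a vector field supported in a single chart in terms of its Euclidean $W^{s,p}$-norm. You handle this by invoking multiplier and change-of-coordinates boundedness for $W^{s,p}(\R^n)$ (equivalently $B^s_{p,p}$) across the finitely many overlapping charts; this works, but requires citing those nontrivial facts for fractional~$s$. The paper sidesteps this entirely with a clean trick: it fixes from the outset a slightly \emph{enlarged} cover $\{\tilde B^\alpha\}$ with $\tilde B^\alpha \supset \overline{B^\alpha}$, defines the $W^{s,p}$-norm using a partition of unity subordinate to the enlarged cover, and then, for each $\alpha_0$, passes to an auxiliary partition of unity $\rho^{\alpha_0}_\alpha$ (still subordinate to $\{\tilde B^\alpha\}$) satisfying $\rho^{\alpha_0}_{\alpha_0}\equiv 1$ on $B^{\alpha_0}$. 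With this choice the manifold norm of $u_t$ equals its Euclidean norm on the nose, and the comparison with the original partition reduces to the standard fact that two partitions of unity give equivalent norms. This avoids any direct appeal to fractional multiplier or coordinate-change estimates.
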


\begin{proof}
Note that by scaling, our assumption $\diam_{s,p}\Diff_c(B) < \infty$ implies the finite diameter of the compactly-supported diffeomorphism group of any Euclidean ball of arbitrary radius.

Let $\{B_{\e_{\alpha}}(x_\alpha)\}_{\alpha\in A}$ be an open cover of $\M$ by geodesic balls, with respect to which the uniform fragmentation property holds.
Consider now the cover $\{B_{\eta_\alpha}(x_\alpha)\}_{\alpha\in A}$, where $\eta_\alpha > \e_\alpha$ for each $\alpha \in A$, such that normal coordinates are defined on $B_{\eta_\alpha}(x_\alpha)$ as well.
To simplify notation, we denote $B^\alpha=B_{\e_\alpha}(x_\alpha)$ and $\tilde{B}^\alpha = B_{\eta_\alpha}(x_\alpha)$.
We will henceforth consider $W^{s,p}$-metrics on $\Diff(\M)$ with respect to $\{\tilde{B}^\alpha\}_{\alpha\in A}$ and a partition of unity $\rho_\alpha$ subordinate to this cover.

Let $\vp\in \Diff(\M)$.
By the uniform fragmentation property, there exists $\vp_1,\ldots,\vp_K\in \Diff(\M)$, with $\supp \vp_i \subset B^\alpha$ for some $\alpha\in A$, such that $\vp = \vp_K \circ \ldots \circ \vp_1$.
By right-invariance of the norm and the triangle inequality we have
\[
\dist_{s,p}(\id, \vp) \le \sum_{i=1}^K \dist_{s,p} (\id, \vp_i).
\]
Therefore, in order to prove that $\diam_{s,p}\Diff(\M)<\infty$, it is enough to prove that $\dist_{s,p} (\id, \vp_i)$ is uniformly bounded.
Therefore, we will henceforth assume that $\supp \vp \subset B^{\alpha_0}$ for some $\alpha_0\in A$.
Since, by assumption, $\diam_{s,p}\Diff_c(B) < \infty$, there exists a path $\vp^t$ (with vector fields $u^t$) from $\id$ to $\vp$, supported on $B^{\alpha_0}$, such that
\[
\length_{s,p}^{\R^n} (\vp^t) < C
\]
for some $C$ independent of $\vp$ (we identified $B^{\alpha_0}$ with a ball in $\R^n$ using normal coordinates as in the definition of the norm on $\mathcal{X}(\M)$).
This does not complete the proof as the support of $u^t$ intersects other coordinate balls involved in the definition of the $W^{s,p}$-norm on $\mathcal{X} (\M)$, and therefore these balls also contribute to the length of the path.
We now show that this contribution is uniformly bounded (and depends only on $\e_{\alpha_0}$ and $\eta_{\alpha_0}$).

Let $\rho_\alpha^{\alpha_0}$ be another partition of unity subordinate to $\tilde{B}^\alpha$, such that $\rho_{\alpha_0}^{\alpha_0}|_{B^{\alpha_0}} \equiv 1$.
With respect to this partition of unity, the length of the path $\vp^t$ is the same as in the coordinate chart (since, by definition of $\rho_\alpha^{\alpha_0}$, the support of $u^t$ intersect only the support of $\rho_{\alpha_0}^{\alpha_0}$), and is therefore bounded independent of $\vp$.

The norm with respect to $\rho_\alpha^{\alpha_0}$ is equivalent to the one with respect to the original $\rho_\alpha$, and therefore, since the choice of $\rho_\alpha^{\alpha_0}$ is independent of $\vp$ (depends only on $\e_{\alpha_0}$ and $\eta_{\alpha_0}$) and the cover is finite, we obtain a uniform bound on the length of $\vp^t$ with respect to our original norm as well.
\end{proof}

Using Proposition~\ref{pn:localization}, we now complete the proof of Theorem~\ref{thm:diam_compactM} by showing that $\diam_{s,p}\Diff_c(B) < \infty$ for $s<1+\frac{1}{p}$.

\begin{lemma}\label{lem:boundedness_ball}
Let $s<1+\frac1p$ and let $B$ denote the unit ball in $\mathbb R^n$. 
Then
\[
\diam_{s,p}\Diff_c(B)  < C(s,p,n) < \infty.
\]
\end{lemma}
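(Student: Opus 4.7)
The plan is to directly adapt the two-step strategy from the one-dimensional case in Section~\ref{sec:diams1_subcritical}, consisting of a rescaling reduction (the $n$-dimensional analog of Lemma~\ref{lem:contractions_S_1}) combined with an explicit construction and estimate of model diffeomorphisms $\psi_{\lambda,\delta}$ (the analog of Lemma~\ref{lem:psikS1}).

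For the rescaling reduction, I would fix $\vp\in \Diff_c(B)$ and choose $\delta>0$ small enough that $\supp \vp \subset B_{1-\delta}$. For $\lambda\in\mathbb{N}$, construct a diffeomorphism $\psi_{\lambda,\delta}\in\Diff_c(B)$ satisfying $\psi_{\lambda,\delta}(x)=\lambda x$ on $B_{(1-\delta)/\lambda}$ (extended arbitrarily to a diffeomorphism of $B$). Setting $\vp^\lambda(x):=\lambda^{-1}\vp(\lambda x)=\psi_{\lambda,\delta}^{-1}\circ\vp\circ\psi_{\lambda,\delta}$, the triangle inequality together with right-invariance of the geodesic distance yields
\[
\dist_{s,p}(\id,\vp)\le 2\dist_{s,p}(\id,\psi_{\lambda,\delta})+\dist_{s,p}(\id,\vp^\lambda).
\]
Rescaling any path from $\id$ to $\vp$ and applying Lemma~\ref{lem:scaling} to its generating vector field shows that the $W^{s,p}$-length is multiplied by at most $\lambda^{(s-1)-n/p}$ (the dominant scaling exponent), whence
\[
\dist_{s,p}(\id,\vp^\lambda)\le\lambda^{(s-1)-n/p}\dist_{s,p}(\id,\vp).
\]
Since $s<1+\tfrac{1}{p}\le 1+\tfrac{n}{p}$, this factor tends to zero as $\lambda\to\infty$, and assuming a uniform bound $\dist_{s,p}(\id,\psi_{\lambda,\delta})\le C$ from the next step, sending $\lambda\to\infty$ gives $\dist_{s,p}(\id,\vp)\le 2C$ uniformly in $\vp$.

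For the second step, I would construct $\psi_{\lambda,\delta}$ as a radial extension of the one-dimensional map of Lemma~\ref{lem:psikS1}, namely $\psi_{\lambda,\delta}(x)=g_{\lambda,\delta}(|x|)\,x/|x|$, where $g_{\lambda,\delta}\colon[0,1]\to[0,1]$ is the 1D contraction. The uniform bound on $\dist_{s,p}(\id,\psi_{\lambda,\delta})$ would be obtained via the affine homotopy $\vp_t=(1-t)\id+t\psi_{\lambda,\delta}$, estimating its vector field in $W^{s,p}(B)$ by separating the Gagliardo seminorm into radial and angular pieces: the radial integration essentially reduces to the 1D estimate in the appendix to Lemma~\ref{lem:psikS1}, while the angular integration contributes a bounded factor and the $r^{n-1}$ Jacobian weight only aids convergence near the contraction scale $r\sim 1/\lambda$.

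The main obstacle is Step~2, and it is precisely what constrains us to $s<1+\tfrac{1}{p}$ rather than the conjectured optimal range $s<1+\tfrac{n}{p}$: the radial construction reduces the core of the estimate to a one-dimensional problem and therefore inherits the 1D regularity threshold. Bridging the gap to the full subcritical range $s<1+\tfrac{n}{p}$ would require a genuinely higher-dimensional construction of a diffeomorphism realizing the same radial volume change at bounded $W^{s,p}$-cost, which, as noted in the introduction, remains open.
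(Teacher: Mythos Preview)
Your proposal is essentially identical to the paper's proof: the same two-step structure (rescaling reduction via conjugation by a radial dilation $\Psi_{\lambda,\delta}$, followed by a uniform bound on $\dist_{s,p}(\id,\Psi_{\lambda,\delta})$ obtained from the one-dimensional construction extended radially). The only minor difference is in the execution of Step~II: rather than directly splitting the Gagliardo seminorm of the radial vector field into radial and angular pieces, the paper packages this into a clean operator bound (Corollary~\ref{cor:radial_vectorfield_norm}, proved via real interpolation between integer Sobolev spaces) showing $\|U_t\|_{W^{s,p}(B)}\le C\|u_t\|_{W^{s,p}(0,1)}$, which immediately reduces to the one-dimensional estimate of Lemma~\ref{lem:psikS1}.
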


\begin{proof}
We will use polar coordinate on $B_\epsilon$, in the standard notation
\begin{equation}
B=\left\{(r,\theta): r\in[0,1], \theta \in S^{n-1}  \right\}\;.
\end{equation}
Similar as in the one-dimensional situation the proof of this theorem will follow in two steps. 

\paragraph{Step I:} First we show that the boundedness of the diameter follows if (controlled) arbitrary change in volume has a bounded cost. 
To this end we denote by  $\Psi_{\lambda,\delta}\in \Diff(B)$ a map satisfying 
\beq
\label{eq:Psi_k_delta}
\Psi_{\lambda,\delta}(r,\theta) = (\psi_{\lambda,\delta}(r),\theta),
\eeq
where
\begin{equation}\label{eq:Psi_k_delta_1}
\psi_{\lambda,\delta}(r) = \lambda r \quad\text{for}\quad r\in \Brk{0, \frac{1-\delta}{\lambda}},
\end{equation}
with $\lambda \in \mathbb{N}$ and $\delta\in (0,1)$.
In addition we assume that for some $C>0$, independent of $\lambda$ and $\delta$, we have $\dist_{s,p}(\id ,\Psi_{\lambda,\delta}) < C$.
Let now $\vp \in \Diff_c(B)$ be an arbitrary diffeomorphism. 
Since it has  compact support, there exists $\delta >0$ such that
$$\supp \vp\subset \left\{[0,1-\delta]\times S^{n-1}\right\}\subset B.$$

Denote 
\[
\vp^\lambda(r,\theta) = \frac{1}{\lambda}\vp(\lambda r,\theta) = \Psi_{\lambda,\delta}^{-1} \circ \vp \circ \Psi_{\lambda,\delta}(r,\theta),
\]
where the last equality holds because $\supp \vp \subset [0,1-\delta]\times S^{d-1}$ and since multiplication by a scalar in polar coordinates is given by
$\lambda(r,\theta)=(\lambda r,\theta)$.
Using the boundedness of $\dist_{s,p}(\id, \Psi_{\lambda,\delta})$ we have
\[
\dist_{s,p}(\id ,\vp) = \dist_{s,p}(\id, \Psi_{\lambda,\delta} \circ \vp^\lambda \circ \Psi_{\lambda,\delta}^{-1}) < 2C + \dist_{s,p}(\id, \vp^\lambda).
\]
By a direct calculation, similar as in  Lemma~\ref{lem:contractions_S_1}, we have that the map $\vp(t,x)\mapsto \vp^\lambda(t,x)=\lambda^{-1}\vp(t,\lambda x)$ is a bijection between paths supported on $[0,1]\times S^{n-1}$ to paths supported on $[0,1/\lambda]\times S^{n-1}$, with the corresponding vector fields
\[
u_t^\lambda(r,\theta) = \frac{1}{\lambda}u_t(\lambda r,\theta).
\]
From here the proof of the the above claims follows exactly as in Lemma~\ref{lem:contractions_S_1}, using Lemma~\ref{lem:scaling} to obtain that
\[
\dist_{s,p}(\id, \vp^\lambda) \le \lambda^{(s-1)-\frac{n}{p}} \dist_{s,p}(\id,\vp).
\] 
Hence, by taking $\lambda \to \infty$, we have $\dist_{s,p}(\id ,\vp) \le 2C$.
Note that it follows from Lemma~\ref{lem:scaling} that this part of the proof holds whenever $s< 1 +\frac{n}{p}$, not merely when $s< 1 + \frac{1}{p}$.

\paragraph{Step II:}
It remains to bound the distance from the identity to $\Psi_{\lambda,\delta}$ independently of $\lambda$ and $\delta$.
Here we will rely on our construction from the one-dimensional case. From Lemma~\ref{lem:psikS1}  we know that there exists a curve $\psi_{\lambda,\delta}^t$ in $\Diff([0,1])$ from $\id_{[0,1]}$ to $\psi_{\lambda,\delta}$ such that
\beq
\label{eq:one_d_bound}
\length_{s,p}(\psi_{\lambda,\delta}^t) < C(s,p), \quad \text{for every $\lambda\in \mathbb{N}$ and $\delta\in (0,1)$}.
\eeq
Let $u_t$ be the vector field associated with the curve $\psi_{\lambda,\delta}^t$, and define a curve $\Psi_{\lambda,\delta}^t$ by flowing from the identity map, along the vector field
\[
U_t(x) = u_t(|x|)\frac{x}{|x|}.
\]
Obviously, $\Psi_{\lambda,\delta}^1=\Psi_{\lambda,\delta}$ satisfies assumptions \eqref{eq:Psi_k_delta}--\eqref{eq:Psi_k_delta_1}.

Since $u_t \in W^{s,p}_0(0,1)$ we have, using Corollary~\ref{cor:radial_vectorfield_norm} and \eqref{eq:one_d_bound}, that
there exists $C=C(s,p, n)>0$, independent of $\lambda$ and $\delta$, such that 
\[
\dist_{W^{s,p}(B)}(\id, \Psi_{\lambda,\delta}) \le \length_{s,p}(\Psi_{\lambda,\delta}^t) < C, \quad \text{for every $\lambda\in \mathbb{N}$ and $\delta\in (0,1)$},
\]
which completes the proof.
\end{proof}

\subsection{The displacement energy}\label{sec:disp_energy_M}

Since step I of the proof of Lemma~\ref{lem:boundedness_ball} holds for any $s < 1 +\frac{n}{p}$, the only ingredient needed for proving that $\diam_{s,p}\Diff(\M) <\infty$ for $s< 1 +\frac{n}{p}$, is a better vector field $U_t$ in step II.
That is, we need a better way of flowing from $\id$ to $\Psi_{\lambda,\delta}$.
An indication that this should be possible is the following proposition, which deals with the uniform boundedness of the displacement energy of sets in $\Diff(S^n)$ (see Definition~\ref{def:displacement_energy}).
Although, as discussed earlier, we do not know that bounded displacement energy is equivalent to bounded diameter, all our current examples are consistent with such a claim.
Moreover, the proof shows that an arbitrary radial change of volume (which is what $\Psi_{\lambda,\delta}$ does) is possible at a bounded cost whenever $s < 1 +\frac{n}{p}$ (at least when $s\in \mathbb{N}$), although the change of volume in the proof is not as controlled as the one induced by $\Psi_{\lambda,\delta}$.

\begin{proposition}\label{pn:displacement_energy_spheres}
The following bounds on the displacement energy of subsets of $S^n$ hold:
\begin{enumerate}
\item If $s > 1+n/p$ then there exists $c=c(s,p)>0$ such that
	\[
	E_{s,p}(S^n\setminus B_\delta) > c|\log \delta|,
	\]
	where $B_\delta$ is a ball of radius $\delta$ in $S^n$.
\item For every integer $k< 1+n/p$, there exists $C=C(k,p,n)>0$ such that
	\[
	E_{k,p}(A) < C \quad \text{for every closed set $A\subsetneq S^n$}.
	\]
\end{enumerate}
\end{proposition}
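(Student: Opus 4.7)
The plan is to treat the two parts separately, adapting the one-dimensional arguments of Proposition~\ref{bounded_displacement_energy}.

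For part 1, suppose $\vp\in\Diff(S^n)$ displaces $A=S^n\setminus B_\delta$, so $\vp(A)\subset B_\delta$ and therefore $\vp(B_\delta)\supset S^n\setminus B_\delta$ up to a null set. By change of variables,
\[
\int_{B_\delta}|D\vp|\,\dVol = \Vol(\vp(B_\delta))\geq \Vol(S^n)-\Vol(B_\delta),
\]
so the mean value theorem yields a point $x\in B_\delta$ with $|D\vp|(x)\geq c\,\delta^{-n}$, hence $\log|D\vp|(x)\geq n|\log\delta|-C$. The bound \eqref{eq:lower_bound_jacobian_dist} then gives $\dist_{s,p}(\id,\vp)\geq c|\log\delta|$, and taking the infimum over all such $\vp$ proves the claim.

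For part 2, fix $\alpha\in(0,\,1+n/p-k)$, which is nonempty since $k<1+n/p$, and a small fixed $\rho\in(0,\pi/4)$. Given a closed set $A\subsetneq S^n$, choose $p\in S^n\setminus A$ and $\e>0$ with $B_\e(p)\cap A=\emptyset$. Using the exponential chart $\exp_p:B_\pi(0)\to S^n\setminus\{-p\}$, I would define the radial vector field
\[
U^{\alpha,\e}(x) = u_{\alpha,\e}(|x|)\,\frac{x}{|x|},
\]
where $u_{\alpha,\e}$ is the higher-dimensional analogue of \eqref{eq:u_alpha_eps}: linear on $[0,\e]$, equal to $r^{1-\alpha}$ on $[\e,\pi-\rho]$, and linearly decaying to $0$ on $[\pi-\rho,\pi-\rho/2]$ (with a standard mollification of the corners). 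Direct polar integration, in the spirit of Corollary~\ref{cor:radial_vectorfield_norm}, yields $\|U^{\alpha,\e}\|_{W^{k,p}(S^n)}\leq C(k,p,n,\alpha)$ uniformly in $\e$, since the critical radial integral $\int_0^{\pi-\rho} r^{(1-\alpha-k)p+n-1}\,dr$ converges precisely when $\alpha+k<1+n/p$.

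The flow $\Phi^{\alpha,\e}$ of $U^{\alpha,\e}$ at time $t_0=(\pi-\rho)^\alpha/\alpha$ maps every point initially outside $B_\e(p)$ into $\overline{B_\rho(-p)}$, since the radial ODE $\dot r=r^{1-\alpha}$ with $r(0)\geq\e$ reaches $r=\pi-\rho$ by time $t_0$, and points closer to $-p$ are either moved further toward $-p$ or fixed. Fixing a rotation $R$ with $R(\overline{B_\rho(-p)})\cap\overline{B_\rho(-p)}=\emptyset$ (possible since $\rho<\pi/4$), the conjugate $\psi:=(\Phi^{\alpha,\e})^{-1}\circ R\circ\Phi^{\alpha,\e}$ satisfies $\psi(A)\subset B_\e(p)$: indeed $\Phi^{\alpha,\e}(A)\subset\overline{B_\rho(-p)}$, then $R$ sends it into $S^n\setminus\overline{B_\rho(-p)}$, and the containment above inverts to $(\Phi^{\alpha,\e})^{-1}(S^n\setminus\overline{B_\rho(-p)})\subset B_\e(p)$. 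Since $B_\e(p)\cap A=\emptyset$, this displaces $A$, and the total cost $\dist_{k,p}(\id,\psi)\leq 2\dist_{k,p}(\id,\Phi^{\alpha,\e})+\dist_{k,p}(\id,R)$ is bounded uniformly in $A$.

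The main technical point is the uniform $W^{k,p}$-norm bound on $U^{\alpha,\e}$, which requires the $n$-dimensional polar calculation together with a mollification near the corners of $u_{\alpha,\e}$ that preserves the estimate; this is the direct extension of the one-dimensional computation in the proof of Proposition~\ref{bounded_displacement_energy}(2), made possible precisely by the assumption $\alpha+k<1+n/p$.
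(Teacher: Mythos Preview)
Your proposal is correct and follows essentially the same approach as the paper. For part~1 you find a point of \emph{large} Jacobian on $B_\delta$ (the paper instead finds a point of \emph{small} Jacobian on $A_\delta$, which requires passing to $\vp^{-1}$ before invoking \eqref{eq:lower_bound_jacobian_dist}), and for part~2 your radial compression field $U^{\alpha,\e}$, the flow-then-conjugate-by-rotation construction, and the critical polar integral $\int r^{(1-\alpha-k)p+n-1}\,dr$ are exactly the paper's argument; just note that Corollary~\ref{cor:radial_vectorfield_norm} alone is not enough here (the one-dimensional $W^{k,p}$ norm of $u_{\alpha,\e}$ can blow up when $k\ge 2$), so the uniform bound really does rely on the direct polar computation with the $r^{n-1}$ weight, together with the derivative bounds $|u_{\alpha,\e}^{(j)}|\lesssim \e^{1-\alpha-j}$ in the mollification region.
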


\begin{remark}
We expect that the same line of proof below also yields the uniform boundedness for any non-integer $s < 1 + n/p$; we did not pursue the more involved estimates for non-integer values of $s$ as the main point of this section is to indicate why we conjecture that $1 + n/p$ is the critical exponent.
However, for some non-integer exponents (in particular when $1+n/p$ is an integer) we could simply use the Sobolev embedding theorem (see, e.g., \cite{behzadan2015multiplication}):
	\[
	\|\cdot \|_{W^{s,p}} \lesssim \|\cdot\|_{W^{r,q}}, \quad \frac{1}{q} - \frac{r}{n} = \frac{1}{p} - \frac{s}{n}, \quad r>s.
	\]
	For example, consider $H^s = W^{s,2}$ in two dimensions.
	Then we know that for $s>2$ the displacement energy is not bounded, while for $s<2$, we can choose $q = \frac{2}{3-s} < 2$, and then
	\[
	\|\cdot \|_{H^s} \lesssim \|\cdot\|_{W^{2,q}}.
	\]
	The uniform boundedness of the displacement energy for $W^{2,q}(S^2)$, $q<2$ therefore implies the boundedness for $H^s$, $s<2$.
\end{remark}

\begin{proof}
We start by proving the unboundedness for large $s$:
\paragraph{Unboundedness for $s > 1 + n/p$.}
Let $\delta>0$ be small enough.
Denote $A_\delta = S^n \setminus B_\delta$.
If $\vp(A_\delta) \cap A_\delta = \emptyset$, then $\vp(A_\delta) \subset B_\delta$, and therefore
\[
\int_{A_\delta} |D\vp| \, \dVol = \Vol(\vp(A_\delta)) \le \Vol(B_\delta).
\]
Since $\Vol(A_\delta)$ is of order one, and $\Vol(B_\delta) = O(\delta^n)$, it follows that there exists a point $x\in S^n$ such that $|D\vp(x)| = O(\delta^n)$.
The first part of the proposition now follows immediately from the estimate \ref{eq:lower_bound_jacobian_dist}.
 
\paragraph{Boundedness for $k < 1 + n/p$.}
For simplicity, we endow $S^n$ with a round metric with diameter $1$, and consider the cover of $S^n$ with two balls of radius $3/4$, one centered at the south pole and the other at the north pole.
Let $A\subsetneq S^n$ be a closed set.
Then, there exists a ball of radius $\e>0$, disjoint of $A$.
Denote it by $B_\e$.
Since an arbitrary rotation of $S^n$ has a bounded cost, we can assume without loss of generality that $B_\e$ is centered at the south pole.

We now construct $\vp$ such that $\vp(A)\subset B_\e$ and $\dist_{k,p}(\id,\vp)$ is bounded independently of $A$ and $\e$.
Fix $\alpha \in (0, \frac{n}{p} + 1 - k)$. 
Let $u_{\alpha,\e}\in C_c^\infty((0,3/4))$ be such that 
\[
u_{\alpha,\e}(x) = 
\begin{cases}
0 & x\in [0,\e/2) \\
x^{1-\alpha} & x\in [\e,2/3)
\end{cases}
\]
and such that, for some $C>0$ independent of $\e$,
\beq\label{eq:bounds_u}
\Abs{u_{\alpha,\e}^{(j)} (x)} \le C \e^{1-\alpha - j}\qquad \forall \,\,x\in \Brk{\frac{\e}{2},\e}, \,\, j = 0\ldots k.
\eeq

Define now a vector field $U_{\alpha,\e}$ on the Euclidean ball of radius $3/4$ by 
\[
U_{\alpha,\e} (x) = u_{\alpha,\e}(|x|) \frac{x}{|x|}.
\]
A straightforward calculation shows that
\[
D^{(j)}U_{\alpha,\e}(x) = \sum_{i=0}^j \frac{u_{\alpha,\e}^{(i)}(|x|)}{|x|^{j-i}} G_{i,j}\brk{\frac{x}{|x|}},
\]
where $G_{i,j}$ is a tensor-valued polynomial (independent of $u_{\alpha,\e}$).
We therefore have that
\beq\label{eq:bound_DU}
\Abs{D^{(j)}U_{\alpha,\e}(x)} \lesssim \sum_{i=0}^j \frac{\Abs{u_{\alpha,\e}^{(i)}(|x|)}}{|x|^{j-i}}.
\eeq
We now evaluate $\|D^{(k)} U_{\alpha,\e}\|_p$, and show that it is independent of $\e$.
By \eqref{eq:bound_DU} it is enough to show that for every $i\le k$,
\beq\label{eq:bound_DU_2}
\int_0^{3/4} \Abs{\frac{u_{\alpha,\e}^{(i)}(r)}{r^{k-i}}}^p \,r^{n-1}\,dr < C
\eeq
for some $C$ independent of $\e$.
Indeed
\[
\begin{split}
\int_0^{3/4} \Abs{\frac{u_{\alpha,\e}^{(i)}(r)}{r^{k-i}}}^p \,r^{n-1}\,dr 
	&= \int_{\e/2}^{\e} \Abs{\frac{u_{\alpha,\e}^{(i)}(r)}{r^{k-i}}}^p \,r^{n-1}\,dr 
		+ \int_{\e}^{2/3} \Abs{\frac{u_{\alpha,\e}^{(i)}(r)}{r^{k-i}}}^p \,r^{n-1}\,dr 
		+ \int_{2/3}^{3/4} \Abs{\frac{u_{\alpha,\e}^{(i)}(r)}{r^{k-i}}}^p \,r^{n-1}\,dr
\end{split}
\]
The third addend on the right-hand side can obviously be bounded independently of $\e$, and therefore we can ignore it.
The second addend can be evaluated explicitly, using the fact that $u_{\alpha,\e}(r) = r^{1-\alpha}$ in this region:
\[
\begin{split}
\int_{\e}^{2/3} \Abs{\frac{u_{\alpha,\e}^{(i)}(r)}{r^{k-i}}}^p \,r^{n-1}\,dr 
	&= C_{\alpha,i} \int_{\e}^{2/3} \Abs{\frac{r^{1-\alpha-i}}{r^{k-i}}}^p \,r^{n-1}\,dr 
		= C_{\alpha,i}  \int_{\e}^{2/3} r^{n-1+(1-\alpha - k)p}\,dr \\
	&< C_{\alpha,i}  \int_{0}^{1} r^{n-1+(1-\alpha - k)p}\,dr = C(\alpha,i,n,k,p),
\end{split}
\]
where in the last inequality we used the fact that $n + (1-\alpha - k)p >0$ since $\alpha < \frac{n}{p} + 1 - k$.
As for the first addend, we have, using \eqref{eq:bounds_u}, that
\[
\begin{split}
\int_{\e/2}^{\e} \Abs{\frac{u_{\alpha,\e}^{(i)}(r)}{r^{k-i}}}^p \,r^{n-1}\,dr 
	&\le C\e^{(1-\alpha - i)p}\int_{\e/2}^\e r^{n-1 - (k-i)p}\,dr \\
	&\le C' \e^{(1-\alpha - i)p} \e^{n-(k-i)p} = C' \e^{n+(1-\alpha - k)p},
\end{split}
\]
which is uniformly bounded in $\e$ since $n + (1-\alpha - k)p >0$.
In the transition to the second line we use the fact that the lower bound of the integral is $\e/2$ rather than $0$, and therefore we get boundedness even if $kp>n$ (for the case $i=0$).
This completes the proof of \eqref{eq:bound_DU_2}.

The proof for $j<k$ is similar, and therefore we obtain that there exists $C=C(k,p,n)$, independent of $\e$, such that
\beq\label{eq:bound_DU_3}
\|U_{\alpha,\e}\|_{W^{k,p}(\R^n)} < C.
\eeq

Let $\psi_t$ be a flow along $U_{\alpha,\e}$.
Similar to the one dimensional case, after time $t_0 = \frac{1}{\alpha 2^\alpha}$ we have that
\[
|\psi_{t_0}(x)| > 1/2 \quad \text{ whenever $|x|>\e$}.
\]

We now consider $\psi_{t_0}$ as a diffeomorphism on $S^n$, using the normal coordinate chart centered at the south pole.
$|\psi_{t_0}(x)| > 1/2$ for $|x|>\e$ implies then that $\psi_{t_0}$ maps the complement of $B_\e$ to the northern hemisphere.
Note that, by the same arguments as in Proposition~\ref{pn:localization}, the bound \eqref{eq:bound_DU_3} implies that
\[
\dist^{S^n}_{k,p}(\id,\psi_{t_0}) < C(k,n,p), \quad \text{ independent of $\e$.}
\]
Let $R$ be a rotation of $S^n$ that maps the south pole to the north pole, and consider
\[
\vp = \psi_{t_0}^{-1} \circ R \circ \psi_{t_0}.
\]
Since $\psi_{t_0}$ maps the complement of $B_\e$ to the northern hemisphere, it follows that $\vp$ maps the complement of $B_\e$ into $B_\e$, and therefore $\vp(A) \subset B_\e$.

As the bound on $\dist_{k,p}(\id,\psi_{t_0})$ implies that $\dist_{k,p}(\id,\vp)$ is bounded independent of $\e$, the proof is complete.
\end{proof}

\section{The diameter of $\Diff_{c}(\mathbb R^n)$}\label{sec:diam_R_n}
In the following we will consider the base manifold to be $n$-dimensional Euclidean space, i.e., $\M = \R^n$. 
In this case it will turn out, that the diameter of the the diffeomorphism group is either zero or unbounded (depending on the order $s$). 
We believe that the analogous results are also true for diffeomorphism groups on more general non-compact manifolds, but for simplicity, we will restrict ourselves here to the Euclidean case.

\begin{proof2}{Theorem~\ref{thm:diam_diff_R_n}}
The zero diameter result follows directly from the vanishing geodesic distance results of \cite{jerrard2018vanishing,jerrard2019geodesic,bauer2013geodesic}. 
It remains to show that the diameter is unbounded otherwise.

For $sp>n$, the proof of positive geodesic distance \cite{bauer2018vanishing,jerrard2018vanishing} uses the Sobolev embedding $W^{s,p}(\R^n) \subset L^\infty(\R^n)$. It shows that for any $\vp\in \Diffc(\R^n)$ and any $x\in \R^n$,
\beq
\label{eq:Sobolev_embedding_bound}
|\vp(x) - x| \le C \dist_{s,p} (\id ,\vp).
\eeq
Here $C=C(s,p,n)>0$ is a constant depending on $s,p$ and $n$.  
By choosing $\vp(0)$ to be arbitrarily far away from the origin this shows that $\diam_{s,p}\Diffc(\R^n) = \infty$ for $sp> n$.

For $s\geq1$ a scaling argument yields the result independently from $n$ and $p$.  
Fix $\lambda > 0$, and for $\vp\in \Diffc(\R^n)$, define $\vp^\lambda(x) := \lambda^{-1} \vp(\lambda x)$.
It is easy to see that $\supp \vp^\lambda = \lambda^{-1} \supp \vp$, hence $\vp^\lambda \in \Diffc(\R^n)$.
{Similar arguments as in the proof of Lemma~\ref{lem:contractions_S_1} show that given a path $\vp_t$ from $\id$ to $\vp$ with a vector field $u_t$, $\vp_t^\lambda$ is a path from $\id$ to $\vp^\lambda$ with a vector field
\[
u_t^\lambda(x) = \frac{1}{\lambda} u_t(\lambda x).
\]
}
It follows from Lemma~\ref{lem:scaling} that
\[
\| u_t^\lambda \|_{L^p}^p = \frac{1}{\lambda^{p+n}} \|u_t\|_{L^p}^p \qquad \| u_t^\lambda \|_{\dot W^{1,p}}^p = \frac{1}{\lambda^{n}} \|u_t\|_{\dot W^{1,p}}^p,
\]
and therefore, for $\lambda < 1$,
\[
 \| u_t^\lambda \|_{W^{1,p}} > \frac{1}{\lambda^{n/p}} \|u_t\|_{W^{1,p}}.
\]
Since  $\vp_t \mapsto \vp_t^{\lambda}$ is a bijection between the paths from $\id$ to $\vp$ to the paths from $\id$ to $\vp^\lambda$, we have
\[
\dist_{1,p}(\id,\vp^\lambda ) \ge \frac{1}{\lambda^{n/p}} \dist_{1,p} (\id, \vp).
\]
Taking $\lambda \to 0$, we obtain that $\diam_{1,p} \Diffc(\R^n)=\infty$ (since from \cite{bauer2013geodesic,jerrard2018vanishing} we already know that $\dist_{1,p} (\id, \vp)$ is not zero).
{Since the $W^{s,p}$ norm for $s>1$ controls the $W^{1,p}$ norm, we obtain that $\diam_{s,p} \Diffc(\R^n) = \infty$ for any $s\ge 1$.}
\end{proof2}

\appendix
\section{Proof of Lemma~\ref{lem:psikS1}}\label{app:technical_lemma}
We consider the family of piecewise-linear maps $\psi_{\lambda+1,\delta}$
\[
\psi_{\lambda+1,\delta } =
\begin{cases}
(\lambda+1)x 											& x\in\Brk{0,\frac{1-\delta}{\lambda+1}} \vspace{.1cm}\\
\frac{\delta (\lambda+1)}{\lambda+\delta}\brk{x- \frac{1-\delta}{\lambda+1}}+1-\delta		& x\in\Brk{\frac{1-\delta}{\lambda+1},1}
\end{cases} =
\begin{cases}
(\lambda+1)x 											& x\in\Brk{0,\frac{1-\delta}{\lambda+1}} \vspace{.1cm}\\
\frac{\delta(\lambda+1)x + (1-\delta)\lambda}{\lambda+\delta} 					& x\in\Brk{\frac{1-\delta}{\lambda+1},1}
\end{cases}.
\]
Since piecewise-linear maps are not elements of the group of diffeomorphisms $\Diff(S^1)$ we have to smoothen the maps around the break points $\frac{1-\delta}{\lambda+1}$ and $0\sim1$. 
However, since the $W^{s,p}$-metric can be extended to the space of Lipschitz-maps (for $s<1+1/p$) and since the smoothening can be done in such a way that the change in the distance to identity is arbitrarily small, we ignore this in the following. 

In the following we will bound the length of the linear homotopy $\vp_t(x)$ between $\id$ and $\psi_{\lambda+1,\delta}$ which albeit being straightforward turns out to be a somewhat tedious calculation.
We bound the length below with respect to the $\dot{W}^{s,p}$ norm, under the assumption that $s>1$.
Boundedness with respect to the lower order parts of $W^{s,p}$ norm, as well as for $W^{s,p}$ norm for $s\le 1$, is similar, but simpler.
We have
\[
\vp_t(x) = (1-t)x + t \psi_{\lambda+1,\delta } (x) =
\begin{cases}
(1+\lambda t)x 							& x\in\Brk{0,\frac{1-\delta}{\lambda +1}}  \vspace{.1cm}\\
x + t\frac{(1-\delta)\lambda }{\lambda +\delta}(1-x) 		& x\in\Brk{\frac{1-\delta}{\lambda +1},1}
\end{cases}.
\]
Its inverse is then given by
\[
\vp_t^{-1}(y) =
\begin{cases}
\frac{y}{1+\lambda t} 								& y\in\Brk{0,\frac{(1-\delta)(1+\lambda t)}{\lambda +1}}  \vspace{.1cm}\\
\frac{y-1}{1-t \frac{(1-\delta)\lambda }{\lambda +\delta}} + 1 		& y\in\Brk{\frac{(1-\delta)(1+\lambda t)}{\lambda +1},1}
\end{cases},
\]
and its time derivative is
\[
\pl_t\vp_t(x) = \psi_{\lambda +1,\delta } (x) - x=
\begin{cases}
\lambda x 							& x\in\Brk{0,\frac{1-\delta}{\lambda +1}}  \vspace{.1cm}\\
\frac{(1-\delta)\lambda }{\lambda +\delta}(1-x) 		& x\in\Brk{\frac{1-\delta}{\lambda +1},1}
\end{cases}.
\]
The vector field $u_t$ defined by $\pl_t\vp_t = u_t \circ \vp_t$ is therefore
\[
u_t(y) = \pl_t \vp_t (\vp_t^{-1}(y)) =
\begin{cases}
\frac{\lambda y}{1+\lambda t} 												& y\in\Brk{0,\frac{(1-\delta)(1+\lambda t)}{\lambda +1}}  \vspace{.1cm}\\
\frac{(1-\delta)\lambda }{\lambda +\delta}\frac{1-y}{1-t \frac{(1-\delta)\lambda }{\lambda +\delta}} 		& y\in\Brk{\frac{(1-\delta)(1+\lambda t)}{\lambda +1},1}
\end{cases}
=
\begin{cases}
\frac{y}{t+\frac{1}{\lambda }} 										& y\in\Brk{0,\frac{(1-\delta)(1+\lambda t)}{\lambda +1}}  \vspace{.1cm}\\
\frac{(1-\delta)(1-y)}{(1-t)(1-\delta) + \delta(1+\frac{1}{\lambda })}	 		& y\in\Brk{\frac{(1-\delta)(1+\lambda t)}{\lambda +1},1}
\end{cases},
\]
and therefore
\[
u_t'(y) =
\begin{cases}
\frac{1}{t+\frac{1}{\lambda }} 									& y\in\Brk{0,\frac{(1-\delta)(1+\lambda t)}{\lambda +1}}  \vspace{.1cm}\\
\frac{-(1-\delta)}{(1-t)(1-\delta) + \delta(1+\frac{1}{\lambda })}	 		& y\in\Brk{\frac{(1-\delta)(1+\lambda t)}{\lambda +1},1}
\end{cases}.
\]

We now evaluate the $\dot{W}^{1+\sigma,p}$-norm of $u_t$, for $\sigma p < 1$. 
That is, we evaluate the $(\sigma,p)$-Gagliardo seminorm of $u_t'$, whose $p$th power is
\[
\iint_{\R^2} \frac{|u_t'(x) - u_t'(y)|^p}{|x-y|^{1+\sigma p}}\,dx\,dy 
= 2\iint_{y>x} \frac{|u_t'(x) - u_t'(y)|^p}{|x-y|^{1+\sigma p}}\,dx\,dy 
= 2\int_{-\infty}^\infty\int_0^\infty \frac{|u_t'(x) - u_t'(x+s)|^p}{s^{1+\sigma p}}\,ds\,dx.
\]
We split this double integral into different regions:
\[
\begin{split}
\int_{-\infty}^\infty&\int_0^\infty \frac{|u_t'(x) - u_t'(x+s)|^p}{s^{1+\sigma p}}\,ds\,dx \\
	&= \int_{-\infty}^0 \int_{-x}^{-x+\frac{(1-\delta)(1+\lambda t)}{\lambda +1}} \frac{\brk{t+\frac{1}{\lambda }}^{-p}}{s^{1+\sigma p}}\,ds\,dx 
	+ \int_{-\infty}^0 \int_{-x+\frac{(1-\delta)(1+\lambda t)}{\lambda +1}}^{-x+1} \frac{\brk{\frac{1-\delta}{(1-t)(1-\delta) + \delta(1+\frac{1}{\lambda })}}^p}{s^{1+\sigma p}}\,ds\,dx \\
	& \quad+\int_0^{\frac{(1-\delta)(1+\lambda t)}{\lambda +1}} \int_{-x+\frac{(1-\delta)(1+\lambda t)}{\lambda +1}}^{-x+1} \frac{\brk{\frac{1}{t+\frac{1}{\lambda }}+\frac{1-\delta}{(1-t)(1-\delta) + \delta(1+\frac{1}{\lambda })}}^p}{s^{1+\sigma p}}\,ds\,dx \\
	&\quad+ \int_{\frac{(1-\delta)(1+\lambda t)}{\lambda +1}}^1 \int_{-x+1}^\infty \frac{\brk{\frac{1-\delta}{(1-t)(1-\delta) + \delta(1+\frac{1}{\lambda })}}^p}{s^{1+\sigma p}}\,ds\,dx .
\end{split}
\]

We now evaluate each of the four integrals in the right-hand side separately.
We will use repeatedly the following: for $\alpha \in (0,1)$ and $a>0$,
\[
\lim_{x\to \infty} (x+a)^\alpha - x^\alpha = 0,
\]
and
\[
(1-x)^\alpha \ge 1 - x^\alpha \qquad x\in[0,1].
\]
All the constants $C$ below are $C=C(p,\sigma)>0$, independent of $\lambda $, $\delta$ and $t$.

For the first integral we have:
\[
\begin{split}
\brk{t+\frac{1}{\lambda }}^{-p}\int_{-\infty}^0 \int_{-x}^{-x+\frac{(1-\delta)(1+\lambda t)}{\lambda +1}} \frac{1}{s^{1+\sigma p}}\,ds\,dx
	&=  \brk{t+\frac{1}{\lambda }}^{-p}\frac{1}{\sigma p} \int_{-\infty}^0 \brk{(-x)^{-\sigma p} - \brk{-x+\frac{(1-\delta)(1+\lambda t)}{\lambda +1}}^{-\sigma p}} \,dx \\
	&=  \brk{t+\frac{1}{\lambda }}^{-p}\frac{1}{\sigma p} \int_{0}^\infty \brk{x^{-\sigma p} - \brk{x+\frac{(1-\delta)(1+\lambda t)}{\lambda +1}}^{-\sigma p}} \,dx \\
	&=  \brk{t+\frac{1}{\lambda }}^{-p}\frac{1}{(1-\sigma p)\sigma p} \brk{x^{1-\sigma p} - \brk{x+\frac{(1-\delta)(1+\lambda t)}{\lambda +1}}^{1-\sigma p}}_0^\infty \\
	&=  \brk{t+\frac{1}{\lambda }}^{-p}\frac{1}{(1-\sigma p)\sigma p} \brk{\frac{(1-\delta)(1+\lambda t)}{\lambda +1}}^{1-\sigma p} \\
	&< C \brk{t+\frac{1}{\lambda }}^{-p+(1-\sigma p)} < C t^{-p+(1-\sigma p)}.
\end{split}
\]

The second integral can be bounded via: 
\[
\begin{split}
&\brk{\frac{1-\delta}{(1-t)(1-\delta) + \delta(1+\frac{1}{\lambda })}}^p\int_{-\infty}^0 \int_{-x+\frac{(1-\delta)(1+\lambda t)}{\lambda +1}}^{-x+1} \frac{1}{s^{1+\sigma p}}\,ds\,dx \\
	&\qquad\qquad = \frac{1}{\sigma p} \brk{\frac{1-\delta}{(1-t)(1-\delta) + \delta(1+\frac{1}{\lambda })}}^p \int_{-\infty}^0 \brk{\brk{\frac{(1-\delta)(1+\lambda t)}{\lambda +1}-x}^{-\sigma p} - (1-x)^{-\sigma p}} \,dx \\
	&\qquad\qquad = \frac{1}{\sigma p} \brk{\frac{1-\delta}{(1-t)(1-\delta) + \delta(1+\frac{1}{\lambda })}}^p \int^{\infty}_0 \brk{\brk{\frac{(1-\delta)(1+\lambda t)}{\lambda +1}+x}^{-\sigma p} - (1+x)^{-\sigma p}} \,dx \\
	&\qquad\qquad = \frac{1}{(1-\sigma p)\sigma p} \brk{\frac{1-\delta}{(1-t)(1-\delta) + \delta(1+\frac{1}{\lambda })}}^p \brk{\brk{\frac{(1-\delta)(1+\lambda t)}{\lambda +1}+x}^{1-\sigma p} - (1+x)^{1-\sigma p}}_0^\infty \\
	&\qquad\qquad = \frac{1}{(1-\sigma p)\sigma p} \brk{\frac{1-\delta}{(1-t)(1-\delta) + \delta(1+\frac{1}{\lambda })}}^p \brk{1 - \brk{\frac{(1-\delta)(1+\lambda t)}{\lambda +1}}^{1-\sigma p}} \\
	&\qquad\qquad \le \frac{1}{(1-\sigma p)\sigma p} \brk{\frac{1-\delta}{(1-t)(1-\delta) + \delta(1+\frac{1}{\lambda })}}^p \brk{1 - \frac{(1-\delta)(1+\lambda t)}{\lambda +1}}^{1-\sigma p}\\
	&\qquad\qquad = \frac{1}{(1-\sigma p)\sigma p} \brk{\frac{1-\delta}{(1-t)(1-\delta) + \delta(1+\frac{1}{\lambda })}}^p\brk{\delta + (1-\delta)\frac{\lambda }{\lambda +1}(1-t)}^{1-\sigma p}\\
	&\qquad\qquad < \frac{1}{(1-\sigma p)\sigma p}\brk{\frac{1-\delta}{(1-t)(1-\delta) + \delta}}^p \brk{\delta + (1-\delta)(1-t)}^{1-\sigma p}\\
	&\qquad\qquad = \frac{(1-\delta)^p}{(1-\sigma p)\sigma p} \brk{\delta + (1-\delta)(1-t)}^{-p+(1-\sigma p)}
		< C(1-t)^{-p+(1-\sigma p)}.
\end{split}
\]

Simirlarly we calcualte for the third integral:
\[
\begin{split}
&\brk{\frac{1}{t+\frac{1}{\lambda }}+\frac{1-\delta}{(1-t)(1-\delta) + \delta(1+\frac{1}{\lambda })}}^p \int_0^{\frac{(1-\delta)(1+\lambda t)}{\lambda +1}} \int_{-x+\frac{(1-\delta)(1+\lambda t)}{\lambda +1}}^{-x+1} \frac{1}{s^{1+\sigma p}}\,ds\,dx \\
	&\qquad\qquad = p\brk{\brk{t+\frac{1}{\lambda }}^{-p}+\brk{\frac{1-\delta}{(1-t)(1-\delta) + \delta(1+\frac{1}{\lambda })}}^p} \int_0^{\frac{(1-\delta)(1+\lambda t)}{\lambda +1}} \int_{-x+\frac{(1-\delta)(1+\lambda t)}{\lambda +1}}^{-x+1} \frac{1}{s^{1+\sigma p}}\,ds\,dx \\
	&\qquad\qquad = \frac{1}{\sigma}\brk{\brk{t+\frac{1}{\lambda }}^{-p}+\brk{\frac{1-\delta}{(1-t)(1-\delta) + \delta(1+\frac{1}{\lambda })}}^p} \int_0^{\frac{(1-\delta)(1+\lambda t)}{\lambda +1}} \brk{\brk{\frac{(1-\delta)(1+\lambda t)}{\lambda +1}-x}^{-\sigma p} - \brk{1-x}^{-\sigma p}} \,dx \\
	&\qquad\qquad = \frac{1}{(1-\sigma p)\sigma}\brk{\brk{t+\frac{1}{\lambda }}^{-p}+\brk{\frac{1-\delta}{(1-t)(1-\delta) + \delta(1+\frac{1}{\lambda })}}^p} 
		\brk{ \brk{1-x}^{1-\sigma p} - \brk{\frac{(1-\delta)(1+\lambda t)}{\lambda +1}-x}^{1-\sigma p}}_0^{\frac{(1-\delta)(1+\lambda t)}{\lambda +1}} \\
	&\qquad\qquad = \frac{1}{(1-\sigma p)\sigma}\brk{\brk{t+\frac{1}{\lambda }}^{-p}+\brk{\frac{1-\delta}{(1-t)(1-\delta) + \delta(1+\frac{1}{\lambda })}}^p} 
		\brk{\brk{1-\frac{(1-\delta)(1+\lambda t)}{\lambda +1}}^{1-\sigma p} - 1 +  \brk{\frac{(1-\delta)(1+\lambda t)}{\lambda +1}}^{1-\sigma p}} \\
	&\qquad\qquad \le \frac{1}{(1-\sigma p)\sigma}\Brk{\brk{t+\frac{1}{\lambda }}^{-p}\brk{\frac{(1-\delta)(1+\lambda t)}{\lambda +1}}^{1-\sigma p}
		+\brk{\frac{1-\delta}{(1-t)(1-\delta) + \delta(1+\frac{1}{\lambda })}}^p\brk{1-\frac{(1-\delta)(1+\lambda t)}{\lambda +1}}^{1-\sigma p}} \\
	&\qquad\qquad \le \frac{1}{(1-\sigma p)\sigma}\Brk{\brk{t+\frac{1}{\lambda }}^{-p}\brk{\frac{(1-\delta)(1+\lambda t)}{\lambda +1}}^{1-\sigma p}
		+\brk{\frac{1-\delta}{(1-t)(1-\delta) + \delta(1+\frac{1}{\lambda })}}^p\brk{\delta + (1-\delta)\frac{\lambda }{\lambda +1}(1-t)}^{1-\sigma p}}\\
	&\qquad\qquad < \frac{1}{(1-\sigma p)\sigma}\Brk{\brk{t+\frac{1}{\lambda }}^{-p}\brk{\frac{1+\lambda t}{\lambda +1}}^{1-\sigma p}
		+\brk{\frac{1}{(1-t)(1-\delta) + \delta}}^p\brk{\delta + (1-\delta)(1-t)}^{1-\sigma p}} \\
	&\qquad\qquad < \frac{1}{(1-\sigma p)\sigma}\Brk{\brk{t+\frac{1}{\lambda }}^{-p+(1-\sigma p)}
		+\brk{\delta + (1-\delta)(1-t)}^{-p+(1-\sigma p)}} \\
	&\qquad\qquad < C\brk{t^{-p+(1-\sigma p)}
		+(1-t)^{-p+(1-\sigma p)}} \\
\end{split}
\]

Finally the last integral can be bounded by:
\[
\begin{split}
&\brk{\frac{1-\delta}{(1-t)(1-\delta) + \delta(1+\frac{1}{\lambda })}}^p \int_{\frac{(1-\delta)(1+\lambda t)}{\lambda +1}}^1 \int_{-x+1}^\infty \frac{1}{s^{1+\sigma p}}\,ds\,dx \\
	&\qquad\qquad = \frac{1}{\sigma p} \brk{\frac{1-\delta}{(1-t)(1-\delta) + \delta(1+\frac{1}{\lambda })}}^p \int_{\frac{(1-\delta)(1+\lambda t)}{\lambda +1}}^1 (1-x)^{-\sigma p} \, dx \\
	&\qquad\qquad	=\frac{1}{(1-\sigma p)\sigma p} \brk{\frac{1-\delta}{(1-t)(1-\delta) + \delta(1+\frac{1}{\lambda })}}^p \left. (1-x)^{1-\sigma p}\right|_1^{\frac{(1-\delta)(1+\lambda t)}{\lambda +1}} \\
	&\qquad\qquad	=\frac{1}{(1-\sigma p)\sigma p} \brk{\frac{1-\delta}{(1-t)(1-\delta) + \delta(1+\frac{1}{\lambda })}}^p \brk{1-\frac{(1-\delta)(1+\lambda t)}{\lambda +1}}^{1-\sigma p} \\
	&\qquad\qquad	=\frac{1}{(1-\sigma p)\sigma p} \brk{\frac{1-\delta}{(1-t)(1-\delta) + \delta(1+\frac{1}{\lambda })}}^p \brk{\delta + (1-\delta)\frac{\lambda }{\lambda +1}(1-t)}^{1-\sigma p} \\
	&\qquad\qquad	<\frac{(1-\delta)^p}{(1-\sigma p)\sigma p} \brk{\frac{1}{(1-t)(1-\delta) + \delta}}^p \brk{\delta + (1-\delta)(1-t)}^{1-\sigma p} \\
	&\qquad\qquad	=\frac{(1-\delta)^p}{(1-\sigma p)\sigma p} \brk{\delta + (1-\delta)(1-t)}^{-p+(1-\sigma p)}
		< C(1-t)^{-p+(1-\sigma p)}.
\end{split}
\]

Overall we obtained
\[
\|u'_t\|_{\dot{W}^{\sigma,p}(\R)} < C\brk{(1-t)^{-p+(1-\sigma p)} + t^{-p+(1-\sigma p)}}^{1/p} 
	< C\brk{(1-t)^{-1+\frac{1-\sigma p}{p}}+ t^{-1+\frac{1-\sigma p}{p}}}
\]
where we used the fact that $(1+x)^\alpha < 1 + x^\alpha$ for $x>0$ and $\alpha\in (0,1)$.

We therefore have, using the fact that $1-\sigma p >0$, that
\[
\int_0^1 \|u'_t\|_{\dot{W}^{\sigma,p}(\R)} \, dt \le C \int_0^1 C\brk{(1-t)^{-1+\frac{1-\sigma p}{p}}+ t^{-1+\frac{1-\sigma p}{p}}} \,dt
	= 2C\frac{p}{1-\sigma p},
\]
which is a bound independent of $\lambda $ and $\delta$.

\section{Sobolev norms of radial functions}
In this section we prove a technical lemma on Sobolev functions, which is used in Section~\ref{sec:diamM_subcritical}.

\begin{lemma}\label{lem:radialfunctionnorm}
Let $n>1$, and define the operator $T:C_c^\infty((0,1))\to C_c^\infty(B_1(\R^n))$ by
\[
Tf(x) = f(|x|).
\]
Then for every $s\ge 0$ and $p\ge 1 $, we have
\[
\|Tf\|_{W^{s,p}} \le C \|f\|_{W^{s,p}},
\]
for some $C=C(s,p,n)>0$ independent of $f$.
That is, $T:W^{s,p}_0(0,1) \to W^{s,p}_0(B_1(\R^n))$ is a bounded operator for every $s\ge 0$ and $p\ge 1$.
\end{lemma}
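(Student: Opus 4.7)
My plan is to split the proof into integer and non-integer $s$, and to reduce the latter to the former by interpolation.

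\textbf{Integer case $s=k$.} For $k=0$ the bound is immediate in polar coordinates:
\[
\|Tf\|_{L^p(B_1)}^p = \omega_{n-1}\int_0^1 |f(r)|^p\, r^{n-1}\,dr \le \omega_{n-1}\|f\|_{L^p((0,1))}^p,
\]
since $r\le 1$. For $k\ge 1$ I would apply Fa\`a di Bruno's formula to the composition $Tf(x)=f(|x|)$. Since $|x|$ is homogeneous of degree one and its $m$-th derivatives are smooth on $\mathbb{R}^n\setminus\{0\}$ and homogeneous of degree $1-m$, this yields the pointwise bound
\[
|D^k(Tf)(x)| \le C_k \sum_{j=1}^k |f^{(j)}(|x|)|\,|x|^{j-k}.
\]
Passing to polar coordinates reduces the control of $\|D^k(Tf)\|_{L^p(B_1)}^p$ to the weighted one-dimensional integrals
\[
I_j := \int_0^1 |f^{(j)}(r)|^p\, r^{n-1-(k-j)p}\,dr, \qquad 1\le j\le k.
\]
When the weight exponent is nonnegative, $I_j \le \|f^{(j)}\|_{L^p}^p$ trivially. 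Otherwise I would iterate the weighted Hardy inequality
\[
\int_0^1 |g(r)|^p\, r^\beta\,dr \le C(\beta, p)\int_0^1 |g'(r)|^p\, r^{\beta+p}\,dr, \qquad \beta\ne -1,
\]
starting with $g=f^{(j)}$: each application raises the exponent by $p$ and lifts the derivative by one, and the required vanishing at $r=0$ is preserved at every step since $f\in C_c^\infty((0,1))$. After $k-j$ iterations the exponent reaches $n-1\ge 0$, so $I_j\le C\|f^{(k)}\|_{L^p}^p$. Combined with the analogous (easier) estimates for lower-order derivatives of $Tf$ and the $L^p$-bound, this gives $\|Tf\|_{W^{k,p}(B_1)} \le C\|f\|_{W^{k,p}((0,1))}$.

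\textbf{Non-integer case $s=k+\sigma$.} I would invoke the identification of the Sobolev--Slobodeckij space $W^{s,p}$ with the real interpolation space $(L^p, W^{N,p})_{s/N,p}$ for any integer $N>s$, as already used in the proof of Lemma~\ref{lem:critical_embedding}. By the interpolation theorem applied to the linear operator $T$, the boundedness established above on every integer-order Sobolev space immediately yields $\|Tf\|_{W^{s,p}(B_1)} \le C\|f\|_{W^{s,p}((0,1))}$ for all $s\ge 0$, with constants depending only on $s$, $p$ and $n$.

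\textbf{Main obstacle.} The only technical subtlety lies in the finitely many critical configurations where the weight exponent hits the forbidden value $-1$ during the Hardy iteration, i.e.\ when $mp=n$ for some $1\le m\le k$. At these isolated triples $(n,p,k)$ the Hardy constant degenerates and one must either insert a logarithmic weight into that step or bypass it by interpolating between two neighboring non-critical exponents (alternatively, estimate the singular term via a one-dimensional Sobolev embedding into a weighted $L^q$ for $q$ slightly off $p$). A minor point to check is that $Tf$ is compactly supported in $B_1$ by construction, so membership in $W^{s,p}_0$ (as opposed to $W^{s,p}$) is automatic from the compact support of $f$ in $(0,1)$.
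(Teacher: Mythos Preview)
Your overall architecture---prove the integer case directly and then interpolate---is exactly the paper's. The difference is in how the weighted one-dimensional integrals $I_j$ are controlled. Instead of iterating the Hardy inequality, the paper writes $f^{(j)}(r)=\int_0^r\!\cdots\!\int_0^{t_{m-1}} f^{(k)}(t_m)\,dt_m\cdots dt_1$ (with $m=k-j$) and views $f^{(j)}(r)/r^m$ as a nested average; repeated application of Jensen's inequality then gives
\[
\int_0^1\Big|\frac{f^{(j)}(r)}{r^{k-j}}\Big|^p r^{n-1}\,dr \le C(p,k,n)\int_0^1|f^{(k)}(t)|^p\,dt,
\]
with the only convergence requirement being $\int_0^1 r^{n-2}\,dr<\infty$, i.e.\ $n>1$. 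In particular, your ``main obstacle'' (the forbidden exponent $\beta=-1$ when $mp=n$) never arises: it is an artifact of the Hardy iteration, not of the underlying estimate. Your suggested workarounds (logarithmic weight or perturbation of the exponent) would presumably work too, but the paper's Jensen route is cleaner and requires no case distinction.

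On the interpolation step, the paper is slightly more careful than your sketch: rather than invoking the $(L^p,W^{N,p})_{s/N,p}$ identification on $\mathbb{R}^n$, it interpolates between consecutive integers on the domain itself and cites a result (for convex domains) that the real interpolation space $(L^p(B_1),W^{1,p}(B_1))_{\sigma,p}$ coincides with the Slobodeckij space on functions supported in $B_1$. Since your functions are compactly supported and extend by zero to $\mathbb{R}^n$, your version should also go through, but the identification of the two norms on the domain deserves a word of justification.
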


\begin{proof}
\paragraph{Step I: integer Sobolev spaces}
We first prove the theorem for $W^{k,p}$ norms, where $k$ is an integer.
For $k=0$, moving to polar coordinates, we have
\[
\|Tf\|_{L^p}^p = \int_{B_1(\R^n)} |Tf(x)|^p \,dx = \w_n \int_0^1 |f(r)|^p r^{n-1}\,dr \le  \w_n\, \|f\|_{L^p}^p,
\]
where $\w_n$ is the measure of the $(n-1)$-dimensional unit sphere.
For $k=1$, we note that $D(Tf)(x) = f'(|x|)\frac{x}{|x|}$, hence $|D(Tf)(x)| = |f'(r)|$ and the estimate is similar.
Differentiating further, we have for $k=2$
\[
D^2(Tf)(x) = \brk{f''(|x|) - \frac{f'(|x|)}{|x|}}\frac{x}{|x|}\otimes \frac{x}{|x|} + \frac{f'(|x|)}{|x|} \id,
\]
and for higher derivatives we obtain
\[
D^k(Tf)(x) = \sum_{j=1}^k \frac{f^{(j)}(|x|)}{|x|^{k-j}} G_j^k\brk{\frac{x}{|x|}},
\]
where $G_j^k$ are smooth $k$-tensor-valued functions on {$S^{n-1}$}, which are independent of $f$.

In order to prove boundedness we need to prove that for $j\le k$ we have that
\[
\int_0^1 \Abs{\frac{f^{(j)}(r)}{r^{k-j}}}^p\,r^{n-1}\,dr \le \int_0^1 \Abs{f^{(k)}(r)}^p\, dr.
\]
This follows from Jensen's inequality: For $k=1$, we have
\[
\begin{split}
\int_0^1 \Abs{\frac{f'(r)}{r}}^p r^{n-1} \,dr & = \int_0^1 \Abs{\frac{1}{r} \int_0^r f''(t)\,dt}^p r^{n-1} \,dr 
	\le \int_0^1 \brk{\frac{1}{r} \int_0^r \Abs{f''(t)}^p \,dt} r^{n-1} \,dr \\
	& = \int_0^1 \int_0^r \Abs{f''(t)}^p \,dt \,r^{n-2} \,dr
	\le \int_0^1 \Abs{f''(t)}^p \,dt \cdot \int_0^1 \,r^{n-2} \,dr \\
	&= \frac{1}{n-1} \int_0^1 \Abs{f''(t)}^p \,dt.
\end{split}
\]
For $k=2$ we have
\[
\begin{split}
\int_0^1 \Abs{\frac{f'(r)}{r^2}}^p r^{n-1} \,dr 
	& = \int_0^1 \Abs{\frac{1}{r} \int_0^r \frac{1}{r} \int_0^t f^{(3)}(s)\,ds\, dt}^p r^{n-1} \,dr 
	 \le \int_0^1 \frac{1}{r} \int_0^r \Abs{ \frac{1}{r} \int_0^t f^{(3)}(s)\,ds}^p \, dt \, r^{n-1} \,dr \\
	 &= \int_0^1 \frac{1}{r} \int_0^r \frac{t^p}{r^p}\Abs{ \frac{1}{t} \int_0^t f^{(3)}(s)\,ds}^p \, dt \, r^{n-1} \,dr 
	 \le \int_0^1 \frac{1}{r} \int_0^r \frac{t^p}{r^p} \frac{1}{t} \int_0^t \Abs{f^{(3)}(s)}^p \,ds \, dt \, r^{n-1} \,dr \\
	 &\le \int_0^1 \Abs{f^{(3)}(s)}^p \,ds \cdot \int_0^1 \frac{1}{r} \int_0^r \frac{t^p}{r^p} \frac{1}{t} \, dt \, r^{n-1} \,dr 
	 = \frac{1}{p(n-1)} \int_0^1 \Abs{f^{(3)}(s)}^p \,ds.
\end{split}
\]
The result for higher values of $k$ follows in a similar manner.

\paragraph{Step II: interpolation}
Assume for now that $s\in (0,1)$.
Since $B_1(\R^n)$ is a convex set, we have that the $W^{s,p}(\R^n)$ norm on functions supported on $B_1(\R^n)$ {(the Gagliardo/Slobodeckij norm)} is equivalent to the norm of the real interpolation space 
$$\chi_0^{s,p}(B_1(\R^n)) = (L^p(B_1(\R^n)),W^{1,p}(B_1(\R^n)))_{s,p},$$ 
defined by
\[
\|f\|_{\chi_0^{s,p}(B_1(\R^n))}^p = \int_0^\infty \brk{\frac{K(t,f)}{t^s}}\,\frac{dt}{t} \qquad K(t,f) = \inf_{g\in C_0^\infty(B_1(\R^n))} \brk{\|f-g\|_{L^p(B_1(\R^n))} + t\|g\|_{W^{1,p}(B_1(\R^n))}}.
\]
See \cite[Theorem~4.7]{brasco2019note}.\footnote{In \cite{brasco2019note} the interpolation is defined with respect to the homogeneous $\dot{W}^{1,p}$ norm, but this does not matter as it is, by the Poincar\'e inequality, equivalent to the full $W^{1,p}$ norm on the space $W_0^{1,p}$ which we are considering. Similarly, the equivalence there is shown {between the interpolation space and} the homogeneous $\dot{W}^{s,p}$ norm, which is again equivalent to the full norm \cite[Section~2.3]{brasco2019note}.}
Since $\chi_0^{s,p}(B_1(\R^n))$ is an interpolation space, the map $T$ is bounded as a map $L^p([0,1])\to L^p(B_1(\R^n))$  and as a map $W_0^{1,p}([0,1])\to W_0^{1,p}(B_1(\R^n))$ and thus is also bounded as a map between the corresponding interpolation spaces $\chi_0^{s,p}([0,1])\to \chi_0^{s,p}(B_1(\R^n))$ (see, e.g., \cite[Section~2.3, Theorem~3]{salo08function}).

When $s=k+\sigma$, the proof is similar: $T$ is bounded as a map of between the interpolation spaces $(\dot{W}^{k,p}(0,1),\dot{W}^{k+1,p}(0,1))_{\sigma,p}\to(\dot{W}^{k,p}(B_1(\R^n)),\dot{W}^{k+1,p}(B_1(\R^n)))_{\sigma,p}$, since by the previous step it is bounded as maps on the interpolating spaces; and the norm on these interpolation spaces is equivalent to the $\dot{W}^{s,p}(\R^n)$-norm on $C_0^\infty(B_1(\R^n))$ functions, by the same results as for the $k=0$ case.
\end{proof}

\begin{remark}
This lemma could probably be proven, at least for low values of $k$, by brute force evaluation of the Gagliardo seminorm, using the Funk-Hecke theorem (see, e.g., \cite{han2012some}).
\end{remark}

An immediate corollary is the analogous result for vector fields, instead of functions:

\begin{corollary}\label{cor:radial_vectorfield_norm}
Let $n>1$, and define the operator $\tilde{T}:C_c^\infty((0,1))\to C_c^\infty(B_1(\R^n);\R^n)$ by
\[
\tilde{T}f(x) = f(|x|)\frac{x}{|x|}
\]
Then for every $s\ge 0$ and $p\ge 1$, we have
\[
\|\tilde{T}f\|_{W^{s,p}} \le C \|f\|_{W^{s,p}},
\]
for some $C=C(s,p,n)>0$ independent of $f$.
That is, $\tilde{T}:W^{s,p}_0(0,1) \to W^{s,p}_0(B_1(\R^n);\R^n)$ is a bounded operator for any $s\ge 0$ and $p\ge 1$.
\end{corollary}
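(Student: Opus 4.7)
The plan is to follow the strategy of Lemma~\ref{lem:radialfunctionnorm}: first establish the bound for integer $s=k$ by computing the derivatives of $\tilde{T}f$ explicitly, then pass to fractional $s$ by real interpolation between consecutive integer levels. The only new input compared to the scalar case of Lemma~\ref{lem:radialfunctionnorm} is a single additional term in the derivative expansion, arising from the fact that the unit-vector factor $x/|x|$ contributes extra factors of $1/|x|$ when differentiated. More precisely, by induction on $k$, using $\partial_j|x|=x_j/|x|$ and $\partial_j(x_i/|x|)=\delta_{ij}/|x|-x_ix_j/|x|^3$, one obtains for every integer $k\ge 0$
\begin{equation*}
D^k(\tilde{T}f)(x) \;=\; \sum_{j=0}^{k} \frac{f^{(j)}(|x|)}{|x|^{k-j}}\,\tilde G^k_j\!\brk{\frac{x}{|x|}},
\end{equation*}
where the $\tilde G^k_j$ are smooth tensor-valued functions on $S^{n-1}$ independent of $f$. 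The only difference from the analogous formula for $D^k(Tf)$ in the proof of Lemma~\ref{lem:radialfunctionnorm} is the presence of the $j=0$ term $|x|^{-k}f(|x|)\tilde G^k_0(x/|x|)$, coming from differentiating $x/|x|$ all $k$ times.

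For the integer-order bound, the $L^p$-norms of the terms with $j\ge 1$ are controlled by $\|f^{(k)}\|_{L^p(0,1)}$ via the Jensen-type estimates of Step~I of Lemma~\ref{lem:radialfunctionnorm}. To control the new $j=0$ term I would invoke a Hardy inequality: since $f$ vanishes in a neighborhood of $0$, Taylor's formula with integral remainder gives $|f(r)|\le \frac{r^{k-1}}{(k-1)!}\int_0^r|f^{(k)}(t)|\,dt$, and H\"older followed by Fubini yields
\begin{equation*}
\int_0^1 \Abs{\frac{f(r)}{r^k}}^p r^{n-1}\,dr \;\le\; C\int_0^1 |f^{(k)}(t)|^p\int_t^1 r^{n-2}\,dr\,dt \;\le\; \frac{C}{n-1}\|f^{(k)}\|_{L^p(0,1)}^p.
\end{equation*}
Combining with the bounds for $j\ge 1$ gives $\|\tilde{T}f\|_{W^{k,p}(B_1(\R^n);\R^n)}\le C\|f\|_{W^{k,p}(0,1)}$ for every integer $k\ge 0$.

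For non-integer $s$ one proceeds exactly as in Step~II of Lemma~\ref{lem:radialfunctionnorm}: since both $(0,1)$ and $B_1(\R^n)$ are convex, the Sobolev--Slobodeckij $W_0^{s,p}$-norm is equivalent to the real-interpolation norm between consecutive integer-order Sobolev spaces, and the boundedness of $\tilde{T}$ at integer levels extends to all fractional $s\ge 0$ by standard interpolation. The main (and only real) obstacle is controlling the new $j=0$ term, and this is precisely where the hypothesis $n>1$ enters: for $n=1$ the weight $r^{n-2}=r^{-1}$ is non-integrable near the origin and the above Fubini step breaks down (though the statement itself is trivial for $n=1$, since then $\tilde{T}f$ is just the odd reflection of $f$).
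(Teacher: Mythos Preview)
Your argument is correct, but the paper takes a much shorter route: it simply observes that $\tilde{T}f = D(TF)$, where $F$ is an antiderivative of $f$, so the bound follows immediately from Lemma~\ref{lem:radialfunctionnorm} applied to $F$ at regularity $s+1$. Your approach instead reruns the full machinery of that lemma --- computing $D^k(\tilde{T}f)$ directly, isolating the new $j=0$ term, handling it by a separate Hardy-type estimate, and then interpolating. The paper's trick is slicker and avoids any new computation; your route is more self-contained and, incidentally, sidesteps a small technical wrinkle in the paper's reduction (an antiderivative of a generic $f\in C_c^\infty((0,1))$ need not itself lie in $C_c^\infty((0,1))$ unless $\int_0^1 f=0$, though this is harmless since the estimates in Lemma~\ref{lem:radialfunctionnorm} only use vanishing near the origin). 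Your remark that $n>1$ is essential is right, but note that this hypothesis is already required in Lemma~\ref{lem:radialfunctionnorm} itself, not only for the new $j=0$ term.
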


\begin{proof}
Let $F\in C_c^\infty((0,1))$ be an antiderivative of $f$.
Then the corollary follows from Lemma~\ref{lem:radialfunctionnorm} since $\tilde{T}f = D(TF)$.
\end{proof}

\section{Diameter and displacement energy}
\label{sec:displacement_energy}
In this section we prove a general result relating bounded displacement energy and bounded diameter, inspired by previous results relating zero displacement energy and vanishing geodesic distance \cite{eliashberg1993biinvariant,shelukhin2017hofer,bauer2018vanishing}.
However, as shown below, compared with the vanishing case we need stronger assumptions on the norms involved, assumptions which are too restrictive to the applications in this paper; therefore we used other means to prove boundedness of the diameter.

Let $G$ be a (possibly infinite dimensional) manifold and topological group with neutral element $e$, Lie algebra $\mathfrak g=T_eG$, and left and right translations $L$ and $R$ given by
\begin{equation}
g_1 g_2= L_{g_1}(g_2)=R_{g_2}(g_1),\;\qquad \forall g_1,g_2\in G\;.	
\end{equation}
Assume for each $g\in G$ that $R_g\colon G\to G$ is smooth, and let $\| \cdot \|$ be a norm on the Lie algebra $\mathfrak g$.
This gives rise to the following right-invariant Riemannian metric on $G$:
\begin{equation}
\| h \|_g = \| TR_{g^{-1}}h\|,\qquad \forall g\in G,\; \forall h\in T_g G\;.
\end{equation}
The corresponding geodesic distance function is defined as
\begin{align}
\dist(g_1,g_2)={\operatorname{inf}}   \int_0^1 \| \partial_t g(t)\|_{g(t)} dt\;,\;\qquad \forall g_1,g_2\in G\;,
\end{align}
where the infimum is taken over all smooth paths in $G$ with $g(0)=g_1$ and $g(1)=g_2$.

\begin{theorem}\label{thm:displacement-to-boundedness}
Let $G$ be as above. Assume that
\begin{enumerate}
\item \label{productoftwo} Any transformation $g$ can be written as a product $g=g_1g_2$ where both $g_1$ and $g_2$ are supported on a proper closed subset of $M$. 
\item \label{uniform_perfect} For any proper closed subset $A\subset M$ the group $G_A\subset G$ of all transformations that have support in $A$ is uniformly perfect, i.e., any $g\in G_A$ can be written as a product of $n$ commutators, where $n$ is independent of $g\in G$.
 \item \label{left_translations} The geodesic distance to a commutator of $g$ and $h$ is uniformly controlled by the minimum of the distances to $g$ and $h$, i.e.,
\begin{equation}\label{ass:glob_bounded_left}
\dist(e,[g,h])=\dist(g\circ h,h\circ g) \leq C \operatorname{min}( \dist(e,g),\dist(e,h)),\quad
\forall g,h \in G,
\end{equation}
where $C$ is independent of both $g$ and $h$.\footnote{Note that this holds if the left multiplication $L_g$ is Lipschitz with Lipschitz constant that is independent of $g$, see \cite[Theorem~1]{bauer2018vanishing}.}
\item \label{bounded_displacement} The displacement energy is globally bounded, i.e., for any proper closed subset $A\subset M$ we have
\begin{align}
E(A)=\inf\left\{\dist(e,g):g\in G, g(A)\cap A=\emptyset \right\} \leq D\;
\end{align}
where $D$ is independent of the set $A$.
\end{enumerate}
Then the diameter of the group $G$ is bounded.
\end{theorem}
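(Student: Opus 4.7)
The plan is to bound $\dist(e,g)$ uniformly for all $g\in G$ by combining the four assumptions in a displacement-type argument, following the general strategy used to deduce vanishing geodesic distance from zero displacement energy in \cite{eliashberg1993biinvariant,bauer2018vanishing}, but at the quantitative level.

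\textbf{Two reductions.} First I would use assumption (1): every $g\in G$ factors as $g=g_1g_2$ with $g_i\in G_{A_i}$ for proper closed subsets $A_i\subset M$, so by the triangle inequality $\dist(e,g)\le\dist(e,g_1)+\dist(e,g_2)$; thus it suffices to bound $\dist(e,h)$ uniformly over $h\in G_A$ for arbitrary proper closed $A\subset M$. For such $h$, uniform perfectness (assumption (2)) produces a factorization $h=\prod_{i=1}^n[a_i,b_i]$ with $a_i,b_i\in G_A$ and $n$ independent of $h$, and a second application of the triangle inequality reduces the task to proving a uniform bound on $\dist(e,[a,b])$ for arbitrary $a,b\in G_A$.

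\textbf{Displacement trick.} Here assumption (4) enters: given $\varepsilon>0$, choose $\phi\in G$ with $\phi(A)\cap A=\emptyset$ and $\dist(e,\phi)\le D+\varepsilon$. The conjugate $\phi b\phi^{-1}$ is supported in $\phi(A)$, which is disjoint from $A\supset\supp(a)$, so $a$ commutes with both $\phi b\phi^{-1}$ and $\phi b^{-1}\phi^{-1}$. Using these commutations, a direct expansion gives the key algebraic identity
\begin{equation*}
[a,[b,\phi]]=a\cdot b\phi b^{-1}\phi^{-1}\cdot a^{-1}\cdot \phi b\phi^{-1}b^{-1}=aba^{-1}b^{-1}=[a,b],
\end{equation*}
where one pulls the two $\phi$-conjugates past the middle $a^{-1}$, whereupon they cancel against each other. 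With this identity in hand, two applications of assumption (3) yield
\begin{equation*}
\dist(e,[a,b])=\dist(e,[a,[b,\phi]])\le C\,\dist(e,[b,\phi])\le C^2\,\dist(e,\phi)\le C^2(D+\varepsilon).
\end{equation*}

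Letting $\varepsilon\to 0$ and combining with the two reductions above gives $\diam(G)\le 2nC^2D<\infty$. The only non-routine step in the argument is the commutator identity $[a,b]=[a,[b,\phi]]$; everything else is the triangle inequality applied to the structural hypotheses. I therefore do not anticipate a serious obstacle — the assumptions appear to be crafted precisely so that the right-invariant version of the classical displacement trick goes through, with (3) being exactly the substitute for bi-invariance that one needs to control the iterated commutator.
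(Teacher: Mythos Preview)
Your proof is correct and follows essentially the same route as the paper: both reduce via assumptions~(1) and~(2) to bounding $\dist(e,[a,b])$ for $a,b\in G_A$, and then use a displacing element $\phi$ together with assumption~(3) to bound this by a constant times $E(A)\le D$. The paper cites \cite[Theorem~1]{bauer2018vanishing} for this last step and records the constant $(1+C)^2$, whereas you spell out the commutator identity $[a,b]=[a,[b,\phi]]$ explicitly and obtain the (slightly sharper) constant $C^2$; the underlying displacement trick is the same.
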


\begin{proof}
Using Assumption \ref{productoftwo} and the right invariance of the geodesic distance we can reduce 
the boundedness of the diameter to consider only transformations that are supported on a proper closed subset of $M$, since
\begin{multline}
\dist(e,g)=  \dist(e,g_1g_2) =  \dist(g_2^{-1},g_1)\leq \dist(g_2^{-1},e)+\dist(e,g_1)\\=\dist(e,g_2)+\dist(e,g_1)\;,
\end{multline}
where both $g_1$ and $g_2$ are supported in a proper subset of $M$. 

Thus it remains to proof the boundedness of the distance from the identity to any transformation $g$ with support in a proper closed subset $A$.
Using Assumption~\ref{uniform_perfect} we write any $g_1=[h_1,h_2][h_3,h_4]...[h_{2n-1},h_{2n}]$ with $h_i \in G_A$. By the same argument as above we obtain
\begin{equation}
\dist(e,g_1) \leq \sum_{i=1}^n \dist(e,[h_{2i-1},h_{2i-1}])\;.
\end{equation}
To bound the distance from the identity to a commutator of transformations with support in $A$ we proceed as in \cite[Theorem~1]{bauer2018vanishing} and use 
Assumption~\ref{left_translations}
to obtain
\begin{equation}
\dist(e,[h_{2i-1},h_{2i-1}]) \leq (1+C)^2 E(A)\;.
\end{equation}
Putting all of this together we have for each $g\in G$ that
\begin{equation}
 \dist(e,g)\leq 2n(1+C)^2 E(A)
\end{equation}
and using assumption~\ref{bounded_displacement} and the triangle inequality this yields
\begin{equation}
 \dist(g,h)\leq 4 n(1+C)^2 D
\end{equation}
for any $g,h\in G$.
\end{proof}

Let now $M=S^n$ and let $G=\Diff(S^n)$. 
Then Assumptions \ref{productoftwo} and \ref{uniform_perfect} are satisfied \cite{tsuboi2008uniform,burago2008conjugation}.
Assumption \ref{bounded_displacement} is satisfied for $W^{s,p}$-metrics of low enough order, see {Proposition~\ref{pn:displacement_energy_spheres}}. 
In the following we will however show that already in the case $s=1$ and  $n=1$ condition~\ref{left_translations} is to restrictive for our purposes as, e.g., the $\dot H^1$ metric on $\Diff(S^1)$, which corresponds to bounded diameter, does not satisfy it:
\begin{lemma}\label{counter:left_translations_H1}
There exist sequences $\psi_n,\varphi_n \in \operatorname{Diff}(S^1)$ such that 
$\dist_{\dot H^1}(\varphi_n \circ \psi_n,\psi_n\circ \varphi_n) \to \pi/2$ but $\dist_{\dot H^1}(\operatorname{Id},\varphi_n) \to 0$. 
\end{lemma}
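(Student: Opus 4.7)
The plan is to use the isometric embedding $\Phi\colon \Diff(S^1)/S^1 \to C^\infty(S^1,\R)$, $\varphi\mapsto 2\sqrt{\varphi'}$, from Lemma~\ref{lem:Wsp_sphere} with $q=2$. The $\dot H^1$ seminorm of a constant vector field vanishes, so the pseudo-distance $\dist_{\dot H^1}$ on $\Diff(S^1)$ factors through the quotient by rotations, where it agrees (via $\Phi$) with the intrinsic (great-circle) distance on the positive part of the $L^2$-sphere of radius $2$. This reduces the whole lemma to a computation in $L^2(S^1)$.

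First I would take $\varphi_n$ to be a rotation of $S^1$; then $\dist_{\dot H^1}(\id,\varphi_n)=0$ trivially, so the second condition of the lemma holds automatically. Because $\varphi_n'\equiv 1$, the chain rule used in the proof of Lemma~\ref{lem:Wsp_sphere} collapses to
\[
\Phi(\varphi_n\circ\psi_n)=\Phi(\psi_n),\qquad \Phi(\psi_n\circ\varphi_n)=\Phi(\psi_n)\circ\varphi_n,
\]
so that
\[
\dist_{\dot H^1}(\varphi_n\circ\psi_n,\psi_n\circ\varphi_n) = 2\arccos\!\left(\tfrac14\langle\Phi(\psi_n),\Phi(\psi_n)\circ\varphi_n\rangle_{L^2}\right).
\]
It remains to produce $\psi_n$ for which the inner product approaches $2\sqrt{2}$, yielding great-circle distance $2\arccos(1/\sqrt{2})=\pi/2$.

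For this, I would let $\psi_n$ be a smooth version of the piecewise-linear diffeomorphism $\psi_{n,\delta_n}$ from Appendix~\ref{app:technical_lemma} with $\delta_n\to 0$, so that $f_n:=\Phi(\psi_n)=2\sqrt{\psi_n'}$ converges in $L^2$ to $2\sqrt{n}\,\chi_{[0,1/n]}$: a tall, narrow bump of height $2\sqrt n$ on an interval of length $1/n$. Choosing $\varphi_n$ to be the rotation by $\epsilon_n=(1-1/\sqrt 2)/n$ makes the supports of $f_n$ and $f_n\circ\varphi_n$ overlap on an interval of length $1/(n\sqrt 2)$, yielding
\[
\langle f_n,f_n\circ\varphi_n\rangle_{L^2}=4n\cdot \tfrac{1}{n\sqrt 2}=2\sqrt 2.
\]
The main technical point is just the $L^2$-convergence $\Phi(\psi_n)\to 2\sqrt n\,\chi_{[0,1/n]}$ after smoothing, which is immediate from the explicit form of $\psi_{n,\delta_n}$. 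Notably, one does \emph{not} need any control on $\dist_{\dot H^1}(\id,\psi_n)$; in fact this quantity diverges, which is precisely what makes left translation by $\psi_n$ arbitrarily non-Lipschitz and defeats condition~\ref{left_translations} of Theorem~\ref{thm:displacement-to-boundedness}.
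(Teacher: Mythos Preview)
Your approach is correct and somewhat different from the paper's. Both arguments rest on Lenells' isometry $\Phi$ and the explicit spherical distance, and both use $\psi_n$ whose derivative is concentrated on $[0,1/n]$. The difference is in $\varphi_n$: the paper takes $\varphi_n$ to be (a smoothing of) the piecewise-linear map with $\varphi_n'=0$ on $[0,1/n]$, $\varphi_n'=2$ on $[1/n,2/n]$, $\varphi_n'=1$ elsewhere, and checks by hand that $\int\sqrt{\varphi_n'}\to 1$ (so $\dist(\id,\varphi_n)\to 0$) while $(\varphi_n\circ\psi_n)'$ and $(\psi_n\circ\varphi_n)'$ have disjoint supports, giving inner product $0$. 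Your choice of $\varphi_n$ as a rotation is cleaner: it makes $\dist_{\dot H^1}(\id,\varphi_n)=0$ automatic and collapses the commutator computation to comparing $\Phi(\psi_n)$ with a translate of itself.

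One normalization caveat: the paper quotes the Lenells formula as $\dist=\arccos\bigl(\int\sqrt{\psi'}\sqrt{\varphi'}\bigr)$, without your factor of $2$, and its construction achieves $\pi/2$ by driving the integral to $0$. Under that convention your overlap choice $\epsilon_n=(1-1/\sqrt2)/n$ would yield $\pi/4$ rather than $\pi/2$. This is a harmless discrepancy in how the $\dot H^1$ metric is normalized (sphere of radius $1$ versus $2$); if you want to match the paper's stated limit regardless of convention, simply take $\epsilon_n\ge 1/n$ so that the supports of $f_n$ and $f_n\circ\varphi_n$ are disjoint and the inner product is exactly $0$. Either way, the essential content --- that the commutator distance stays bounded away from $0$ while $\dist(\id,\varphi_n)\to 0$ --- is established.
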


\begin{proof}
By the analysis of Lenells \cite{lenells2007hunter}  we have an explicit formula for the geodesic distance of the homogeneous $\dot H^1$-metric given by:
\begin{equation}
 \dist_{1,2}(\psi,\varphi)= \arccos \left( \int_{0}^1 \sqrt{\psi'}\sqrt{\varphi'} d\theta \right)
\end{equation}
Now define the functions 
\begin{equation}
\varphi_n:\;
 \begin{cases} 
      0 &  0 \leq \theta\leq \frac{1}n \\
      2x & \frac1n \leq \theta\leq \frac{2}n \\
      x & \frac2n \leq \theta\leq 1
   \end{cases}\qquad
\psi_n:\;
 \begin{cases} 
      nx &  0 \leq \theta\leq \frac{1}n \\
      1 & \frac1n \leq \theta\leq 1
   \end{cases}   
\end{equation}
The functions $\varphi_n$ and $\psi_n$ are not diffeomorphisms, but we can smooth them with an arbitrarily small change to the $\dot{H}^1$ distances considered.
The claim now follows by a straightforward calculation.
\end{proof}

{\footnotesize
\bibliographystyle{abbrv}

}

\end{document}